\newtheorem{thm}{Theorem}[section]
\newtheorem{lemma}[thm]{Lemma}
\newtheorem{proposition}[thm]{Proposition}
\newtheorem{definition}[thm]{Definition}
\newtheorem{corollary}[thm]{Corollary}
\newtheorem{claim}[thm]{Claim}
\newtheorem{notation}[thm]{Notation}
\newcommand{\p}{\mathbb{P}}
\newcommand{\q}{\mathbb{Q}}
\newcommand{\cf}{\mathrm{cf}}
\newcommand{\cof}{\mathrm{cof}}
\newcommand{\dom}{\mathrm{dom}}
\begin{document}

\title{Indestructible Guessing Models and the Continuum}

\author{Sean Cox and John Krueger}

\address{Sean Cox \\ Department of Mathematics and Applied Mathematics \\
Virginia Commonwealth University \\ 
1015 Floyd Avenue \\ 
PO Box 842014 \\ 
Richmond, Virginia 23284}
\email{scox9@vcu.edu}

\address{John Krueger \\ Department of Mathematics \\ 
University of North Texas \\
1155 Union Circle \#311430 \\
Denton, TX 76203}
\email{jkrueger@unt.edu}

\date{October 2015; revised December 2016}

\thanks{2010 \emph{Mathematics Subject Classification:} 03E05, 03E35, 03E40, 
03E65.}

\thanks{\emph{Key words and phrases.} Indestructible guessing 
model, $\textsf{IGMP}$, approximation property, strongly proper.}

\thanks{Part of this work was completed in June 2015 under a collaboration 
sponsored by the first author's Simons Foundation grant 318467.}

\thanks{The research of the 
second author was partially supported by NSF grant DMS-1464859.}

\begin{abstract}
We introduce a stronger version of an $\omega_1$-guessing model, 
which we call an indestructibly $\omega_1$-guessing model. 
The principle $\textsf{IGMP}$ states that there are stationarily many 
indestructibly $\omega_1$-guessing models. 
This principle, which follows from $\textsf{PFA}$, captures many of the 
consequences of $\textsf{PFA}$, including the Suslin hypothesis and the 
singular cardinal hypothesis. 
We prove that $\textsf{IGMP}$ is consistent with the continuum 
being arbitrarily large.
\end{abstract}

\maketitle

The idea of an $\omega_1$-guessing model was introduced 
by Viale-Weiss \cite{vialeweiss}, who showed that the combinatorial 
principle 
$\textsf{ISP}(\omega_2)$ of Weiss \cite{weiss} 
is equivalent to the existence of stationarily 
many $\omega_1$-guessing models in $P_{\omega_2}(H(\theta))$, 
for all cardinals $\theta \ge \omega_2$. 
They showed that $\textsf{PFA}$ implies $\textsf{ISP}(\omega_2)$, and 
in turn $\textsf{ISP}(\omega_2)$ implies many of the consequences 
of $\textsf{PFA}$, including the failure of square principles. 
In \cite{vialeweiss} it was asked whether $\textsf{ISP}(\omega_2)$ 
determines the value of the continuum. 
We answered this question negatively in \cite{jk26}, by showing that 
$\textsf{ISP}(\omega_2)$ is consistent with $2^\omega$ having any 
value of uncountable cofinality greater than $\omega_1$.

In this paper we introduce a stronger kind of guessing model, which we 
call an \emph{indestructibly $\omega_1$-guessing model}. 
We also introduce a new principle, denoted by $\textsf{IGMP}$, which asserts the 
existence of stationarily many indestructibly $\omega_1$-guessing models 
in $P_{\omega_2}(H(\theta))$, for any cardinal 
$\theta \ge \omega_2$. 
As before, 
$\textsf{PFA}$ implies $\textsf{IGMP}$, 
but $\textsf{IGMP}$ captures more of the consequences of $\textsf{PFA}$ 
than does $\textsf{ISP}(\omega_2)$, 
including the Suslin hypothesis. 
As with the principle $\textsf{ISP}(\omega_2)$, a natural question is 
whether the stronger principle $\textsf{IGMP}$ determines the value of 
the continuum. 
The main result of this paper is that $\textsf{IGMP}$ is consistent with 
$2^\omega$ being equal to any $\lambda \ge \omega_2$ with 
cofinality at least $\omega_2$.

\bigskip

In Section 1, we review material which will be necessary for 
reading the paper. 
In Section 2, we discuss $\omega_1$-guessing models, and give new 
proofs of several previously known consequences of $\textsf{ISP}(\omega_2)$. 
In Section 3, we introduce indestructibly $\omega_1$-guessing models, 
the principle $\textsf{IGMP}$, and derive some consequences of 
$\textsf{IGMP}$, including Suslin's hypothesis and $\textsf{SCH}$.

In Section 5, we review the ideas of strong genericity and the strongly proper 
collapse. We also prove a new theorem about the preservation of 
strong properness after proper forcing. 
In Sections 6 and 7, we carefully develop a finite support iteration of 
specializing forcings, and prove that a certain quotient of such an 
iteration has the $\omega_1$-approximation property. 
In Section 8, we prove our consistency result, that $\textsf{IGMP}$ 
is consistent with the continuum being greater than $\omega_2$.

\section{Preliminaries}

We review some background material which will 
be necessary for understanding the paper. 
We assume that the reader is already familiar with forcing, iterated forcing, 
and proper forcing. 

If $\kappa$ is a regular cardinal and $X$ is a set, 
$P_{\kappa}(X)$ denotes the set $\{ a \subseteq X : |a| < \kappa \}$. 
The reader should be familiar with the basic definitions and facts regarding 
club and stationary subsets of $P_{\kappa}(X)$.

A \emph{tree} is a strict partial ordering $(T,<_T)$ such that for any $t \in T$, 
the set $\{ u \in T : u <_T t \}$ is well-ordered by $<_T$. 
We write $u \le_T v$ to mean that either $u <_T v$ or $u = v$. 
We sometimes say that $T$ is a tree without explicitly mentioning its partial order, 
which is always denoted by $<_T$. 
For an ordinal $\alpha$, $T_\alpha$ is the set of $t \in T$ such that 
the set $\{ u \in T : u <_T t \}$, ordered by $<_T$, has order type $\alpha$. 
The set $T_\alpha$ is called \emph{level $\alpha$ of $T$}. 
The \emph{height of $T$} is the least $\delta$ such that $T_\delta$ is empty. 
Let $T \restriction \beta := \bigcup \{ T_\alpha : \alpha < \beta \}$. 
A set $b$ is a \emph{branch of $T$} if it is a maximal linearly 
ordered subset of $T$. 

For a regular uncountable cardinal $\kappa$, a 
\emph{$\kappa$-Aronszajn tree} is a tree of height $\kappa$, all of whose levels 
have size less than $\kappa$, which has no branch of length $\kappa$. 
A \emph{weak $\kappa$-Kurepa tree} is a tree of height and size $\kappa$ 
which has more than $\kappa$ many branches of length $\kappa$.

Let $T$ be a tree. 
A \emph{specializing function for $T$} is a function 
$f : T \to \omega$ such that for all $x, y \in T$, 
if $x <_T y$ then $f(x) \ne f(y)$. 
Note that if $T$ has a specializing function, then $T$ has no branches 
of length $\omega_1$. 
On the other hand, suppose that $T$ is a tree 
which has no branches of length $\omega_1$.  
Define $P(T)$ as the forcing poset, ordered by reverse inclusion, 
whose conditions are 
finite functions $p$ from a subset of $T$ into $\omega$, such that 
for all $x, y \in \dom(p)$, if $x <_T y$ in $\dom(p)$, then $p(x) \ne p(y)$. 
Then $P(T)$ is $\omega_1$-c.c., 
and if $G$ is a $V$-generic filter for $P(T)$, then 
$\bigcup G$ is a specializing function for $T$ (\cite{baumgartner2}).

We will frequently use the product lemma. 
This result says that if $\p$ and $\q$ are forcing posets, then the $V$-generic 
filters for $\p \times \q$ are exactly those filters of the form 
$G \times H$, where 
$G$ is a $V$-generic filter for $\p$, and $H$ is a $V[G]$-generic 
filter for $\q$. 
Moreover, in that case $H$ is a $V$-generic filter for $\q$, 
$G$ is a $V[H]$-generic filter for $\p$, and 
$V[G \times H] = V[G][H] = V[H][G]$.

Let $\p$ and $\q$ be forcing posets, where $\p$ is a suborder of $\q$. 
We say that $\p$ is a \emph{regular suborder} of $\q$ if (a) for all $p$ and $q$ in $\p$, 
if $p$ and $q$ are incompatible in $\p$, then $p$ and $q$ are incompatible in $\q$, 
and (b) if $A$ is a maximal antichain of $\p$, then $A$ is predense in $\q$.

Let $\p$ be a regular suborder of $\q$, and let $G$ be a $V$-generic 
filter on $\p$. 
In $V[G]$, let $\q / G$ be the forcing poset consisting of conditions 
$q \in \q$ such that for all $s \in G$, 
$q$ and $s$ are compatible in $\q$, with the same ordering as $\q$. 
Then $\p * (\q / \dot G_{\p})$ is forcing equivalent to $\q$. 
Moreover:

\begin{lemma}
Let $\p$ be a regular suborder of $\q$.
Suppose that $G$ is a $V$-generic filter on $\p$, and $H$ is a 
$V[G]$-generic filter on $\q / G$. 
Then $H$ is a $V$-generic filter on $\q$, $H \cap \p = G$, 
and $V[G][H] = V[H]$.

Conversely, if $H$ is a $V$-generic filter on $\q$, then 
$H \cap \p$ is a $V$-generic filter on $\p$, 
$H$ is a $V[H \cap \p]$-generic filter on $\q / (H \cap \p)$, 
and $V[H] = V[H \cap \p][H]$.
\end{lemma}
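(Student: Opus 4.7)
My plan is to prove both directions via the standard regularity-of-suborder arguments, with the two nontrivial density claims hinging on regularity. For the forward direction, assume $G$ is $V$-generic on $\p$ and $H$ is $V[G]$-generic on $\q/G$. I would first check $G \subseteq \q/G$ (any two elements of $G$ are compatible in $\p$, hence in $\q$ since $\p$ is a suborder of $\q$). For each $p \in G$, I would then show $D_p := \{ q \in \q/G : q \le_\q p \}$ is dense in $\q/G$: given $r \in \q/G$, take a $\p$-name $\dot{r}$, a condition $p_0 \in G$ forcing $\dot{r} \in \q/\dot{G}$, and use a maximal-antichain argument below $p_0$ together with compatibility of $r, p$ in $\q$ to produce a common extension of $r, p$ still in $\q/G$. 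Genericity then gives $p \in H$, so $G \subseteq H \cap \p$. For the reverse containment, let $p \in H \cap \p$: the set $\{ r \in \p : r \le_\p p \text{ or } r \perp_\p p \}$ is predense in $\p$, so $G$ meets it; any $r \in G$ with $r \perp_\p p$ would also satisfy $r \perp_\q p$ by regularity, contradicting $r, p \in H$, so $G$ contains some extension of $p$, whence $p \in G$.

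For $V$-genericity of $H$ on $\q$, given a dense $D \in V$ of $\q$, the core claim is that $D \cap (\q/G)$ is dense in $\q/G$ in $V[G]$. Working with names in $V$: given $p \in \p$ and a $\p$-name $\dot{r}$ with $p \Vdash \dot{r} \in \q/\dot{G}$, combine density of $D$ in $\q$ with a maximal-antichain argument below $p$ to find $p' \le_\p p$ and $q \in D$ with $p' \Vdash q \le_\q \dot{r}$ and $p' \Vdash q \in \q/\dot{G}$; reinterpreting in $V[G]$ yields the required density. Then $V[G]$-genericity of $H$ on $\q/G$ gives $H \cap D \ne \emptyset$. Finally $V[G][H] = V[H]$, since $G = H \cap \p \in V[H]$.

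The converse direction is more direct. Given $V$-generic $H$ on $\q$, set $G := H \cap \p$; this is a filter on $\p$, and for any maximal antichain $A \in V$ of $\p$, regularity makes $A$ predense in $\q$, so $H$ meets $A$ and hence so does $G$, giving $V$-genericity of $G$ on $\p$. For $H$ generic over $V[G]$ on $\q/G$: $H \subseteq \q/G$ since any $q \in H$ and any $s \in G \subseteq H$ lie in the $\q$-filter $H$ and so are $\q$-compatible. For genericity, given a dense $D \in V[G]$ of $\q/G$ with name $\dot{D}$ forced dense by some $p \in G$, the set of $q \in \q$ extending $p$ for which some $p' \le_\p p$ forces $\check{q} \in \dot{D}$ is dense in $\q$ below $p$, so $H$ meets it, and the witnessing $p'$ lies in $G$. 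Again $V[H] = V[G][H]$. The main obstacle throughout is the density of $D \cap (\q/G)$ in $\q/G$ in the forward direction: regularity is precisely what lets $\p$-incompatibility transfer to $\q$-incompatibility, enabling the name-theoretic selection of $p'$ and $q$ above.
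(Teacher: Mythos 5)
The paper itself does not prove this lemma; it cites \cite[Lemma 1.6]{jk26}. So I am judging your argument on its own terms. Your forward direction is the standard argument and is essentially right: $G \subseteq \q/G$, the density of $D_p$ in $\q/G$ for $p \in G$ (this is the content of Lemma 1.4 of the paper), the antichain argument showing $H \cap \p \subseteq G$, and the density of $D \cap (\q/G)$ in $\q/G$ for $D \in V$ dense in $\q$ (Lemma 1.3 of the paper) are all correctly identified as the key points, and your sketches of them go through using the characterization of ``$s \Vdash q \in \q/\dot G_\p$'' from Lemma 1.2 together with regularity.

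There is, however, a genuine gap in the converse direction, at the step showing $H$ is $V[G]$-generic on $\q/G$. The set you propose, namely the $q \le_\q p$ for which \emph{some} $p' \le_\p p$ forces $\check q \in \dot D$, is indeed dense below $p$ in $\q$; but when $H$ meets it you obtain $q \in H$ together with some witness $p' \in \p$, and nothing whatsoever places that $p'$ in $G = H \cap \p$. Your assertion that ``the witnessing $p'$ lies in $G$'' is unjustified and false in general, so you cannot conclude $q \in \dot D^G = D$. The standard repair is to strengthen the dense set: take the set of $w \le_\q p$ for which there exist $q \in \q$ and $p' \in \p$ with $w \le_\q q$, $w \le_\q p'$, and $p' \Vdash \check q \in \dot D \cap (\q/\dot G_\p)$. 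Density below $p$ is proved as before (having found $s'' \le_\p p$ and $q$ with $s'' \Vdash \check q \in \dot D \cap (\q/\dot G_\p)$, note that $s''$ and $q$ are compatible in $\q$ by Lemma 1.2 and take $w$ below both). Now if $w \in H$, upward closure of $H$ in $\q$ gives $p' \in H \cap \p = G$ and $q \in H$, whence $q \in H \cap D$ as required. With that correction the converse direction also goes through.
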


\begin{proof}
See \cite[Lemma 1.6]{jk26}
\end{proof}

\begin{lemma}
Let $\p$ be a regular suborder of $\q$. 
Then for all $q \in \q$, there is $s \in \p$ such that for all $t \le s$ 
in $\p$, $q$ and $t$ are compatible in $\q$. 
Moreover, this property of $s$ is equivalent to $s$ forcing in $\p$ 
that $q$ is in $\q / \dot G_\p$.
\end{lemma}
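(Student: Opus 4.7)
The plan is to prove the existence statement by contradiction, and then verify the ``moreover'' as two matching directions using a generic filter argument.

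For the first claim, I would fix $q \in \q$ and consider the set
\[
D := \{ t \in \p : t \text{ and } q \text{ are incompatible in } \q \}.
\]
Suppose, toward a contradiction, that no $s \in \p$ works, i.e., for every $s \in \p$ there is $t \le s$ in $\p$ with $t \in D$. This just says $D$ is dense in $\p$, so by Zorn's lemma I can extract a maximal antichain $A$ of $\p$ with $A \subseteq D$ (density ensures that a maximal antichain within $D$ is still maximal in $\p$). By part (b) of the definition of regular suborder, $A$ is predense in $\q$, so some $a \in A$ is compatible with $q$ in $\q$; this contradicts $a \in D$. Hence some $s \in \p$ has the desired property.

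For the equivalence, I would recall that $q \in \q / \dot G_\p$ asserts precisely that $q$ is compatible in $\q$ with every element of the generic filter on $\p$. For the forward direction, assume every $t \le s$ in $\p$ is compatible with $q$ in $\q$, let $G$ be $V$-generic on $\p$ with $s \in G$, and take any $r \in G$; pick $t \in G$ below both $s$ and $r$, so by hypothesis $t$ and $q$ have a common extension $q'$ in $\q$, and then $q' \le r$ witnesses compatibility of $q$ and $r$. For the reverse direction, suppose $s$ forces $q \in \q / \dot G_\p$ and take any $t \le s$ in $\p$; choose a $V$-generic $G$ on $\p$ containing $t$ (hence $s$), apply the forced statement at $r = t$, and conclude $q$ and $t$ are compatible in $\q$.

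The only step with any subtlety is the Zorn extraction in the first part, where one must check that maximality of $A$ inside the dense set $D$ upgrades to maximality in $\p$; everything else is essentially unwinding the definition of $\q / G$ against the filter property of $G$. I expect no genuine obstacle, since the regularity hypothesis (part (b)) is tailored exactly to make the contradiction in the first paragraph go through.
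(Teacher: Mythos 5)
Your proof is correct, and it is the standard argument; the paper itself gives no inline proof for this lemma, deferring instead to \cite[Lemmas 1.1, 1.3]{jk26}, where essentially the same reasoning appears (dense-set/maximal-antichain contradiction for existence, and unwinding the definition of $\q / \dot G_\p$ against the filter for the equivalence). The two small points worth having checked --- that a maximal antichain inside the dense set $D$ is maximal in $\p$, and that compatibility in $\q$ is absolute between $V$ and $V[G]$ since any common extension lies in $\q \in V$ --- both go through as you indicate.
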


\begin{proof}
See \cite[Lemmas 1.1, 1.3]{jk26}.
\end{proof}

\begin{lemma}
Let $\p$ and $\q$ be forcing posets, and assume that $\p$ is a 
regular suborder of $\q$. 
If $D$ is a dense subset of $\q$, then $\p$ forces that 
$D \cap (\q / \dot G_\p)$ 
is a dense subset of $\q / \dot G_\p$.
\end{lemma}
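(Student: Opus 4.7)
The plan is to transfer the density question for $\q/G$ into a density question for $\q$ by exploiting the correspondence from the previous lemma: if $G$ is $V$-generic for $\p$ and $K$ is $V[G]$-generic for $\q/G$, then $K$, viewed as a subset of $\q$, is $V$-generic for $\q$. Since $D$ is assumed dense in $\q$, such a $K$ must meet $D$, and any witness lives in $K \subseteq \q/G$ by construction.

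Concretely, I fix $G$ a $V$-generic filter for $\p$, work in $V[G]$, and take an arbitrary $q \in \q/G$; the goal is to find $d \in D \cap \q/G$ with $d \le q$. Since $\q/G$ inherits its order from $\q$, the relation $\le$ is unambiguous. Let $K$ be any $V[G]$-generic filter for $\q/G$ containing $q$; by the previous lemma, $K$ is also $V$-generic for $\q$.

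The key combinatorial step is to produce the right predense set in $\q$. Define
\[
D^{*} = \{ d \in D : d \le q \text{ in } \q \} \cup \{ r \in \q : r \perp q \text{ in } \q \}.
\]
I would verify $D^{*}$ is predense in $\q$: given $r \in \q$, either $r \perp q$ (so $r \in D^{*}$), or $r$ is compatible with $q$, and then a common extension has, by density of $D$, a further extension $d \in D$; this $d$ satisfies $d \le q$, so $d$ lies in the first piece of $D^{*}$, and $d$ is compatible with $r$. Genericity of $K$ in $\q$ now yields $K \cap D^{*} \ne \emptyset$, and since $q \in K$ and $K$ is a filter in $\q$, no element incompatible with $q$ lies in $K$. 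Therefore $K$ meets $\{d \in D : d \le q\}$, and any such $d$ belongs to $D$, is $\le q$, and lives in $K \subseteq \q/G$, as required.

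The main (mild) obstacle is recognizing that a purely $V[G]$-level argument is awkward: an element of $D$ below $q$ in $\q$ need not lie in $\q/G$, so one cannot directly invoke density of $D$. The trick is to route through a $V[G]$-generic $K$ for $\q/G$; the lifting of $K$ to a $\q$-generic provided by the previous lemma automatically picks out conditions that belong to $\q/G$, and the predense set $D^{*}$ is the natural gadget for combining density of $D$ in $\q$ with the restriction $d \le q$.
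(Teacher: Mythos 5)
Your argument is correct. The paper itself only cites \cite[Lemma 1.5]{jk26} for this, and the in-house style for such facts (see the proof of Lemma 1.4) is a direct density argument in $\p$: given $q \in \q/G$, one shows that the set of $t \in \p$ forcing some $d \in D$ with $d \leq q$ into $\q/\dot G_\p$ is dense below a condition of $G$, using Lemma 1.2 and the regularity of $\p$ in $\q$ to transfer compatibility in $\q$ back to compatibility in $\p$. You instead pass to a further generic $K$ for $\q/G$, lift it to a $V$-generic filter on $\q$ via Lemma 1.1, and meet the predense set $D^{*} = \{d \in D : d \leq q\} \cup \{r : r \perp q\}$, which lies in $V$; since $q \in K$ rules out the second piece, $K$ picks out a $d \in D$ with $d \leq q$ lying in $\q/G$. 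This is a genuinely different route: it avoids any computation with the forcing relation for $\p$ at the cost of two metamathematical steps you should make explicit --- the existence of a $V[G]$-generic $K$ containing $q$ (formally, either a Boolean-valued argument or a proof by contradiction via the forcing theorem), and the downward absoluteness step that transfers the nonemptiness of $\{d \in D \cap (\q/G) : d \leq q\}$, a set of $V[G]$, from $V[G][K]$ back to $V[G]$. With those two points spelled out, the proof is complete.
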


\begin{proof}
See \cite[Lemma 1.5]{jk26}.
\end{proof}

\begin{lemma}
Let $\p$ and $\q$ be forcing posets, and assume that $\p$ is a 
regular suborder of $\q$. 
Let $G$ be a $V$-generic filter on $\p$. 
Suppose that $s \in G$ and $p \in \q / G$. 
Then $s$ and $p$ are compatible in $\q / G$.
\end{lemma}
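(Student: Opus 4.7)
The plan is to exhibit a witness $r \in \q/G$ with $r \le s$ and $r \le p$ in $\q$. First, applying Lemma 1.2 to $p$ together with the hypothesis $p \in \q/G$, I get some $s^* \in G$ forcing $p \in \q/\dot G_\p$; by taking a common refinement with $s$ inside $G$, I may assume $s^* \le s$ in $\p$. By the characterization in Lemma 1.2, every $\p$-extension of $s^*$ is then compatible in $\q$ with $p$, so a condition below $s^*$ and $p$ in $\q$ exists in $V$.

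The naive move---pick any $r \le s^*, p$ in $\q$---does not suffice, because such an $r$ need not lie in $\q/G$: it could be incompatible in $\q$ with some $u \in G$ that is not below $s^*$. Instead I would run a density argument inside $\p$ below $s^*$. Let $D$ be the set of $t \in \p$ with $t \le s^*$ for which there exists some $r \in \q$ with $r \le s^*$, $r \le p$, and $t$ forcing $r \in \q/\dot G_\p$. To see that $D$ is dense below $s^*$: given $u \le s^*$, pick $r \le u, p$ in $\q$ (available because $u$ is a $\p$-extension of $s^*$), apply Lemma 1.2 to $r$ to obtain $t_r \in \p$ forcing $r \in \q/\dot G_\p$, observe that $t_r$ and $u$ are compatible in $\q$ via their common extension with $r$ and hence (by clause (a) of the regular suborder definition) compatible in $\p$, and take any $t \le t_r, u$ in $\p$.

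Since $s^* \in G$ and $D$ is dense below $s^*$, $G$ meets $D$. The $r$ associated with a chosen element of $D \cap G$ lies in $\q/G$ in $V[G]$, with $r \le s^* \le s$ and $r \le p$, which is the required witness of compatibility in $\q/G$.

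The principal obstacle is the density of $D$: producing a single $t \le u$ in $\p$ that forces a specific $r$ (itself below $s^*, p$ in $\q$) to be in $\q/\dot G_\p$. Here the regular suborder clause is indispensable, since Lemma 1.2 only delivers $t_r \in \p$ with no a priori comparability with $u$; it is the transfer of compatibility from $\q$ down to $\p$ that lets us refine $t_r$ below $u$ inside $\p$.
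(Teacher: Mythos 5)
Your proof is correct. The combinatorial heart coincides with the paper's: take a condition $u$ in $\p$ below something in $G$ that forces $p$ into the quotient, choose a common extension $r \le u, p$ in $\q$, apply Lemma 1.2 to $r$ to get $t_r \in \p$ forcing $r \in \q / \dot G_\p$, and use clause (a) of the regular suborder definition to transfer the $\q$-compatibility of $t_r$ and $u$ down to $\p$. The difference is the logical packaging: the paper argues by contradiction, supposing some $t \in G$ forces $s$ and $p$ to be incompatible in the quotient and then producing $y \le t$ which forces a common extension $v$ into $\q / \dot G_\p$; you run the identical construction as a density argument below $s^*$ and invoke genericity, which has the mild advantage of actually exhibiting a common extension $r \in \q / G$ of $s$ and $p$ rather than merely refuting the forced incompatibility. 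Since both routes rest on exactly the same two inputs (Lemma 1.2 and the downward transfer of compatibility from $\q$ to $\p$), the proofs are essentially interchangeable, and your density formulation is, if anything, slightly more constructive.
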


\begin{proof}
If not, then there is $t \in G$ such that 
$t$ forces in $\p$ that (a) $p$ is in $\q / \dot G_\p$, and 
(b) $p$ and $s$ are incompatible in $\q / \dot G_\p$. 
Fix $u \in G$ with $u \le s, t$.

As $p \in \q / G$ and $u \in G$, fix $v \le u, p$ in $\q$. 
By Lemma 1.2, fix $w \in \p$ such that for all $z \le w$ in $\p$, 
$v$ and $z$ are compatible in $\q$. 
Then in particular, $v$ and $w$ are compatible in $\q$, and since 
$v \le u$, $u$ and $w$ are compatible in $\q$. 
As $\p$ is a regular suborder of $\q$ and $u$ and $w$ are in $\p$, 
it follows that $u$ and $w$ are 
compatible in $\p$. 
Fix $y \le w, u$ in $\p$.

Since $y \le w$, every extension of $y$ in $\p$ is compatible with $v$ 
in $\q$. 
By Lemma 1.2, $y$ forces in $\p$ that $v \in \q / \dot G_\p$. 
Now $v \le u, p$ in $\q$. 
Since $u \le s$, we have that $v \le s, p$ in $\q$. 
But $y$ forces that $v \in \q / \dot G_\p$, so $y$ forces that 
$s$ and $p$ are compatible in $\q / \dot G_\p$. 
This contradicts the choice of $t$ 
and the fact that $y \le t$.
\end{proof}

Let $\p$ and $\q$ be forcing posets. 
A function $f : \p \to \q$ is a \emph{regular embedding} 
if (a) for all $p, q \in \p$, if $q \le p$ in $\p$, then 
$f(q) \le f(p)$ in $\q$; (b) for all $p, q \in \p$, 
if $p$ and $q$ are incompatible in $\p$, then $f(p)$ and $f(q)$ are 
incompatible in $\q$; 
(c) if $A$ is a maximal antichain of $\p$, then $f[A]$ is predense in $\q$. 
Note that if $f : \p \to \q$ is a regular embedding, then $f[\p]$ is a regular 
suborder of $\q$. 
A function $f : \p \to \q$ is a \emph{dense embedding} if it satisfies 
(a) and (b) above, and $f[\p]$ is dense in $\q$. 
Note that any dense embedding is a regular embedding.

Assume that $j : V \to M$ is an elementary embedding 
with critical point $\kappa$, living in some outer model $W$ of $V$. 
Let $\p$ be a forcing poset in $V$ which is $\kappa$-c.c. 
We claim that $j \restriction \p$ is a regular embedding of $\p$ 
into $j(\p)$. 
Namely, the preservation of the order and incompatibility of conditions from 
$\p$ to $j(\p)$ follows from the elementarity of $j$. 
And if $A$ is a maximal antichain of $\p$, then by elementarity 
$M$ models that $j(A)$ is a maximal antichain of $j(\p)$. 
By upwards absoluteness, $j(A)$ is a maximal antichain of $j(\p)$. 
But since $\p$ is $\kappa$-c.c., $|A| < \kappa$, and therefore 
$j(A) = j[A]$. 
So $j[A]$ is a maximal antichain of $j(\p)$.

A function $\pi : \q \to \p$, where $\p$ and $\q$ are forcing posets, 
is called a \emph{projection mapping} if (a) $\pi(1_\q) = 
1_\p$, (b) $q \le p$ in $\q$ implies that $\pi(q) \le \pi(p)$ 
in $\p$, and (c) whenever $p \le \pi(q)$ in $\p$, then there is some 
$r \le q$ in $\q$ such that $\pi(r) \le p$ in $\p$.
 
If $\pi : \q \to \p$ is a projection mapping and $G$ is a $V$-generic filter on $\p$, 
then in $V[G]$ we can define the forcing poset 
$\q / G$ whose conditions are those $q \in \q$ such 
that $\pi(q) \in G$, with the same ordering as $\q$. 
Then $\q$ is forcing equivalent to $\p * (\q / \dot G_\p)$.

\begin{lemma}
Let $\p$ be a suborder of $\q$. 
Suppose that there exists a projection mapping $\pi : \q \to \p$ satisfying that 
(i) $\pi(p) = p$ for all $p \in \p$, and 
(ii) $q \le \pi(q)$ for all $q \in \q$. 
Then $\p$ is a regular suborder of $\q$. 
Moreover, if $G$ is a $V$-generic filter on $\p$, then the two kinds of quotients 
$\q / G$ in $V[G]$ are the same.
\end{lemma}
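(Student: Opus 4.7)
The plan is to verify the two conditions defining a regular suborder (preservation of incompatibility and predensity of maximal antichains) by pushing witnesses down through $\pi$ and back up through the projection property~(c), and then to check both inclusions of the equality of the two quotients.

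First I would verify condition~(a) of the regular suborder definition. Suppose $p, q \in \p$ have a common extension $r \in \q$. Applying $\pi$, properties (i) and the order-preservation clause of ``projection'' give $\pi(r) \le \pi(p) = p$ and $\pi(r) \le \pi(q) = q$, so $\pi(r) \in \p$ witnesses compatibility of $p$ and $q$ in $\p$. This is the contrapositive of (a).

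Next I would verify condition~(b). Let $A$ be a maximal antichain of $\p$ and let $q \in \q$ be arbitrary. Consider $\pi(q) \in \p$: by maximality of $A$, some $a \in A$ is compatible with $\pi(q)$ in $\p$, witnessed by some $p \in \p$ with $p \le a$ and $p \le \pi(q)$. Apply clause~(c) of the projection property to $p \le \pi(q)$ to obtain $r \le q$ in $\q$ with $\pi(r) \le p$. By (ii) applied to $r$, we have $r \le \pi(r) \le p \le a$, so $r$ is a common extension of $q$ and $a$ in $\q$. Hence $A$ is predense in $\q$.

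For the quotients, let $G$ be $V$-generic on $\p$; write $Q_{\pi} := \{q \in \q : \pi(q) \in G\}$ (the projection quotient) and $Q_{\mathrm{reg}} := \{q \in \q : q \text{ is compatible in } \q \text{ with every } s \in G\}$ (the regular-suborder quotient). For $Q_\pi \subseteq Q_{\mathrm{reg}}$: if $\pi(q) \in G$ and $s \in G$, pick $t \in G$ with $t \le s, \pi(q)$, and use~(c) to obtain $r \le q$ with $\pi(r) \le t$; then $r \le \pi(r) \le t \le s$ by~(ii), so $r$ witnesses compatibility of $q$ and $s$. For $Q_{\mathrm{reg}} \subseteq Q_\pi$: suppose $\pi(q) \notin G$. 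By genericity there is $s \in G$ incompatible with $\pi(q)$ in $\p$. If $r \le s, q$ in $\q$, then $\pi(r) \le \pi(s) = s$ and $\pi(r) \le \pi(q)$, contradicting incompatibility; hence $q$ is not compatible with $s$ in $\q$, so $q \notin Q_{\mathrm{reg}}$.

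The argument is essentially mechanical once one recognizes that properties~(i), (ii), and (c) together let one freely pass between a condition $q \in \q$ and its image $\pi(q) \in \p$ while preserving compatibility data. The only step requiring mild care is the second inclusion of the quotient equality, where one must resist the temptation to work with $q$ directly in $\p$ (it need not lie there) and instead reduce to comparing $\pi(q)$ with elements of $G$; this is the one place where property~(ii) is strictly necessary for lifting extensions back up from $\p$ to $\q$.
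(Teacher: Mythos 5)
Your proof is correct and follows essentially the same route as the paper's: verify (a) and (b) by pushing witnesses through $\pi$ and lifting via the projection clause, then prove the quotient equality by showing $\pi(q)\in G$ iff $q$ is compatible with every element of $G$. The only cosmetic difference is that you phrase the inclusion $Q_{\mathrm{reg}}\subseteq Q_\pi$ contrapositively, using the equivalent genericity fact that $\pi(q)\notin G$ yields some $s\in G$ incompatible with $\pi(q)$.
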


\begin{proof}
Suppose that $p$ and $q$ are in $\p$, and $p$ and $q$ are compatible 
in $\q$. 
Let $s \le p, q$ in $\q$. 
Then $\pi(s) \le \pi(p) = p$ and $\pi(s) \le \pi(q) = q$. 
So $p$ and $q$ are compatible in $\p$.

Let $A$ be a maximal antichain of $\p$, and we will 
show that $A$ is predense in $\q$. 
Let $q \in \q$, and we will show that $q$ is compatible with 
some condition in $A$. 
Then $\pi(q) \in \p$, so as $A$ is a maximal antichain of $\p$, we can 
fix $s \in A$ and $r \in \p$ such that $r \le \pi(q), s$. 
Since $\pi$ is a projection mapping, we can fix $t \le q$ in $\q$ such that 
$\pi(t) \le r$. 
But then $t \le q$, and $t \le \pi(t) \le r \le s$, so $q$ is compatible 
with $s$.

Let $G$ be a $V$-generic filter on $\p$. 
We will prove that for all $q \in \q$, $q$ is compatible in $\q$ with 
every condition in $G$ iff $\pi(q) \in G$. 
Assume that $q$ is compatible with every condition in $G$. 
Since $G$ is a $V$-generic filter on $\p$, to show that $\pi(q) \in G$ 
it suffices to show 
that $\pi(q)$ is compatible in $\p$ with every condition in $G$. 
So let $s \in G$, and we will show that $\pi(q)$ and $s$ 
are compatible in $\p$. 
By assumption, $s$ and $q$ are compatible in $\q$, so fix 
$t \le q, s$. 
Then $\pi(t) \le \pi(q)$ and $\pi(t) \le \pi(s) = s$. 
Hence $\pi(q)$ and $s$ are compatible in $\p$.

Conversely, assume that $\pi(q) \in G$, and we will show that $q$ is compatible 
in $\q$ with every condition in $G$. 
Fix $s \in G$. 
Then there is $t \in G$ with $t \le \pi(q), s$. 
Since $\pi$ is a projection mapping, there is $u \le q$ such that 
$\pi(u) \le t$. 
Then $u \le q$ and $u \le \pi(u) \le t \le s$, so $q$ and $s$ 
are compatible in $\q$.
\end{proof}

A pair $(V,W)$ of transitive sets or classes with $V \subseteq W$ 
is said to have 
the \emph{$\omega_1$-covering property} if for every set $a \in W$ 
which is a subset of $V \cap On$ and which $W$ models is countable, there is 
a set of ordinals 
$b \in V$ which $V$ models is countable such that $a \subseteq b$. 
A forcing poset $\p$ has the \emph{$\omega_1$-covering property} if $\p$ 
forces that $(V,V[\dot G_\p])$ has the $\omega_1$-covering property.

For a set or class $N$, a set $d \subseteq N$ 
is said to be \emph{countably approximated by $N$} 
if for any set $a$ in $N$ which $N$ models is countable, 
$d \cap a \in N$. 
A pair $(V,W)$ of transitive sets or classes with $V \subseteq W$ 
is said to have the 
\emph{$\omega_1$-approximation property} 
if whenever $d \in W$ is a bounded subset of $V \cap On$ which is countably 
approximated by $V$, we have that $d \in V$. 
A forcing poset $\p$ has the \emph{$\omega_1$-approximation property} 
if $\p$ forces that $(V,V[\dot G_\p])$ has the $\omega_1$-approximation property.

Note that if $\p$ has the $\omega_1$-approximation property, then 
$\p$ preserves $\omega_1$.

\begin{lemma}
Suppose that $\p$ has the $\omega_1$-approximation property. 
Let $T$ be a tree. 
Suppose that $G$ is a $V$-generic filter on $\p$. 
Then any branch of $T$ in $V[G]$ whose length has 
uncountable cofinality is in $V$.
\end{lemma}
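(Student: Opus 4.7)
The plan is to verify that $b$ is countably approximated by $V$ and then invoke the $\omega_1$-approximation property of $\p$ directly. We may assume $T\in V$. Fix in $V$ a bijection between $T$ and an ordinal, so that any branch $b\in V[G]$ of $T$ becomes a bounded subset of $V\cap\mathrm{On}$. It then suffices to show that $a\cap b\in V$ for every countable $a\in V$, and without loss of generality $a\subseteq T$.

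First note that any branch is downward closed in $T$: if $s<_T t$ and $t\in b$, then $s$ is comparable with every $u\in b$, since the $<_T$-predecessors of $t$ are linearly ordered and every element of $b$ above $t$ lies above $s$; hence $b\cup\{s\}$ is a chain and, by maximality, $s\in b$. Now let $\beta$ be the supremum of the levels of the elements of $a$, plus $1$. This is a countable ordinal computed in $V$, and since $\cf(\delta)\ge\omega_1$ we have $\beta<\omega_1\le\delta$, so $b$ contains a (unique) element $t^*$ at level $\beta$. Because $b$ is a downward-closed chain, for every $s\in T$ of level less than $\beta$ we have $s\in b$ iff $s<_T t^*$; therefore
\[
a\cap b \;=\; \{\,s\in a : s<_T t^*\,\}.
\]

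The crucial observation is that, although the assignment $\alpha\mapsto b_\alpha$ is a genuinely new object of $V[G]$, the specific value $t^*$ is a single element of $T$ and $T\subseteq V$; thus $t^*\in V$. The right-hand side above is then definable in $V$ from the parameters $a$, $<_T$, and $t^*$, all in $V$, so $a\cap b\in V$. This verifies countable approximation of $b$ by $V$, and the $\omega_1$-approximation property gives $b\in V$. The only subtlety worth flagging is precisely the distinction between the map $\alpha\mapsto b_\alpha$, which typically does not lie in $V$, and each of its individual values, which always does; the argument succeeds because only one such value, $b_\beta$, is needed at a time.
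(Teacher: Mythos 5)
Your overall strategy is exactly the paper's: show that $b$ is countably approximated by $V$ and then invoke the $\omega_1$-approximation property. However, one step fails as written. You define $\beta$ to be the supremum of the levels of the elements of $a$, plus one, and assert that this "is a countable ordinal computed in $V$," concluding $\beta<\omega_1\le\delta$. The lemma concerns an arbitrary tree $T$, and while $a$ is countable, its elements may sit at arbitrarily high levels of $T$; a supremum of countably many ordinals need not be countable. For example, if $T$ has height $\omega_2$, the branch $b$ has length $\delta=\omega_1$, and $a$ contains a node at level $\omega_1+1$, then $\beta>\delta$, the branch $b$ has no element at level $\beta$, and the identity $a\cap b=\{s\in a: s<_T t^*\}$ has no $t^*$ to refer to. So the step "there is a (unique) element $t^*$ of $b$ at level $\beta$" can fail.

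The correct justification uses the uncountable cofinality of $\delta$ rather than the (false) countability of $\beta$: since $a\cap b$ is a countable subset of the chain $b$ and $\cf(\delta)>\omega$, the set $a\cap b$ is bounded in $b$, so there is $y\in b$ with $a\cap b\subseteq\{x\in T: x<_T y\}$; downward closure of $b$ then gives $a\cap b=a\cap\{x\in T: x<_T y\}$, which lies in $V$ because $a$, $<_T$, and $y$ all do. (Equivalently, you could take $\beta$ to be the supremum of the levels of those elements of $a$ lying at levels strictly below $\delta$, which is $<\delta$ precisely because $\cf(\delta)\ge\omega_1$, and note that elements of $a$ at level $\ge\delta$ are neither in $b$ nor below $t^*$.) This is exactly the paper's move. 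Your closing observation --- that the single node $y$ (or $t^*$) belongs to $V$ even though the function $\alpha\mapsto b_\alpha$ does not --- is the right key point; only the bounding step needs to be repaired.
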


\begin{proof}
Without loss of generality, assume that the underlying set of $T$ 
is an ordinal. 
Let $b$ be a branch of $T$ in $V[G]$ whose length has 
uncountable cofinality. 
We claim that $b$ is countably approximated by $V$. 
Then $b \in V$ and we are done. 
Let $a$ be a countable set in $V$. 
Since the length of $b$ has uncountable cofinality, there is $y \in b$ 
such that $a \cap b \subseteq \{ x \in T : x <_T y \}$. 
Then $a \cap b  = a \cap \{ x \in T : x <_T y \}$, which is in $V$.
\end{proof}

\begin{lemma}
Suppose that $(V_0,V_1)$ and $(V_1,V_2)$ are pairs of transitive sets or 
classes which model $\textsf{ZFC}$, have the same ordinals, and 
satisfy that $V_0 \subseteq V_1 \subseteq V_2$. 
Assume that both pairs have the $\omega_1$-covering 
property and the $\omega_1$-approximation property. 
Then $(V_0,V_2)$ has the $\omega_1$-covering property 
and the $\omega_1$-approximation property.
\end{lemma}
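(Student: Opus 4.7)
The plan is to handle the two properties separately, composing them along the chain $V_0 \subseteq V_1 \subseteq V_2$. Throughout, I use that since $V_0, V_1, V_2$ have the same ordinals, $V_0 \cap On = V_1 \cap On = V_2 \cap On$.

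For $\omega_1$-covering, the argument is direct composition. Given $a \in V_2$ which $V_2$ believes is countable and which is a subset of $V_0 \cap On$, first view $a$ as a subset of $V_1 \cap On$ and apply $\omega_1$-covering for $(V_1, V_2)$ to obtain $b_1 \in V_1$, countable in $V_1$, with $a \subseteq b_1$. Then apply $\omega_1$-covering for $(V_0, V_1)$ to $b_1$, getting $b_0 \in V_0$ countable in $V_0$ with $b_1 \subseteq b_0$. Then $a \subseteq b_0$, as required.

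For $\omega_1$-approximation, suppose $d \in V_2$ is a bounded subset of $V_0 \cap On$ which is countably approximated by $V_0$. I would first establish the auxiliary claim that $d$ is also countably approximated by $V_1$. To see this, fix a set $a \in V_1$ which $V_1$ models is countable; I may assume $a$ is a set of ordinals (otherwise $d \cap a = \emptyset$). By $\omega_1$-covering for $(V_0, V_1)$, there is $b \in V_0$, countable in $V_0$, with $a \subseteq b$. Since $d$ is countably approximated by $V_0$, we have $d \cap b \in V_0 \subseteq V_1$; then $d \cap a = (d \cap b) \cap a$ belongs to $V_1$, proving the claim. Now $d \in V_2$ is a bounded subset of $V_1 \cap On$ which is countably approximated by $V_1$, so $\omega_1$-approximation for $(V_1, V_2)$ yields $d \in V_1$. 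Finally, $d \in V_1$ is a bounded subset of $V_0 \cap On$ countably approximated by $V_0$ (by hypothesis), so $\omega_1$-approximation for $(V_0, V_1)$ yields $d \in V_0$.

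The only genuinely subtle step is the auxiliary claim, which is the place where the two properties interact: we know countable approximation by $V_0$, but to push $d$ down into $V_1$ we need countable approximation by $V_1$, and bridging this gap is exactly what the $\omega_1$-covering of $(V_0, V_1)$ is for. The rest is bookkeeping through the two applications of the approximation property.
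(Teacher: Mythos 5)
Your proof is correct and follows essentially the same route as the paper's: the covering property composes directly, and the key step for approximation is exactly the paper's auxiliary claim that covering for $(V_0,V_1)$ upgrades countable approximation by $V_0$ to countable approximation by $V_1$, after which the two approximation properties are applied in succession. (The parenthetical ``otherwise $d\cap a=\emptyset$'' is slightly off --- a countable $a\in V_1$ need not be disjoint from the ordinals just because it is not a set of ordinals --- but replacing $a$ by $a\cap On$ handles this and changes nothing.)
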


\begin{proof}
It is easy to check that $(V_0,V_2)$ has the $\omega_1$-covering property. 
Let $d$ be a bounded subset of $V_0 \cap On$ in $V_2$ 
which is countably approximated by $V_0$. 
We will show that $d \in V_0$.

We claim that $d$ is countably approximated by $V_1$. 
Let $a$ be a countable set in $V_1$. 
Since $(V_0,V_1)$ has the $\omega_1$-covering property, 
we can fix a countable set $b$ in $V_0$ 
such that $a \subseteq b$. 
Since $d$ is countably approximated by $V_0$, 
$b \cap d$ is in $V_0$ and hence in $V_1$. 
Therefore $a \cap d = a \cap (b \cap d)$ is in $V_1$. 

Since $(V_1,V_2)$ has the $\omega_1$-approximation property, 
$d \in V_1$. 
As $d$ is countably approximated by $V_0$ and 
$(V_0,V_1)$ has the $\omega_1$-approximation property, $d \in V_0$.
\end{proof}

It follow that if $\p$ has the $\omega_1$-covering property 
and the $\omega_1$-approximation property, 
and $\p$ forces that $\dot \q$ has the 
$\omega_1$-covering property and the $\omega_1$-approximation property, 
then the 
two step iteration $\p * \dot \q$ has the $\omega_1$-covering 
property and the $\omega_1$-approximation property.

\begin{lemma}
Suppose that $V \subseteq W_0 \subseteq W$ are transitive sets or classes, 
and $(V,W)$ has the $\omega_1$-approximation property. 
Then $(V,W_0)$ has the $\omega_1$-approximation property.  
It follows that if $\p$ is a regular suborder of $\q$, and $\q$ has the 
$\omega_1$-approximation property, then $\p$ has the 
$\omega_1$-approximation property.
\end{lemma}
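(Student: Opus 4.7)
The plan is to observe that the countable approximation hypothesis on a set $d$ is intrinsic to $d$ and $V$ alone, so that once we verify it for $d \in W_0$, it remains true viewing $d$ as an element of the larger $W$. Concretely, let $d \in W_0$ be a bounded subset of $V \cap On$ that is countably approximated by $V$. Since $W_0 \subseteq W$, we have $d \in W$; the statement ``$d$ is bounded in $V \cap On$'' and the statement ``for every $a \in V$ which $V$ models is countable, $d \cap a \in V$'' both refer only to $d$ and $V$, and so continue to hold. Applying the $\omega_1$-approximation property of $(V,W)$ to $d$ then gives $d \in V$, which is exactly what is required for $(V,W_0)$ to have the $\omega_1$-approximation property.

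For the consequence about regular suborders, the plan is to reduce to the statement just proved by using Lemma 1.1 to realize a $\p$-extension as an intermediate model of a $\q$-extension. Let $G$ be a $V$-generic filter on $\p$. Choose (by passing to a suitable forcing extension, or working inside a large enough outer model) a $V[G]$-generic filter $H$ on $\q / G$. By Lemma 1.1, $H$ is a $V$-generic filter on $\q$ and $V[G][H] = V[H]$. Thus $V \subseteq V[G] \subseteq V[H]$, and the hypothesis that $\q$ has the $\omega_1$-approximation property tells us that $(V, V[H])$ has the $\omega_1$-approximation property. By the first part of the lemma applied to $V \subseteq V[G] \subseteq V[H]$, the pair $(V, V[G])$ also has the $\omega_1$-approximation property. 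Since $G$ was arbitrary, $\p$ has the $\omega_1$-approximation property.

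There is no real obstacle here: the argument is essentially a definition-unwinding, and the only point requiring any care is emphasizing that countable approximation of $d$ by $V$ is a property expressible without reference to any ambient model above $V$, which is what allows the property to be inherited downward from $(V,W)$ to $(V,W_0)$.
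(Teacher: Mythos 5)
Your argument is correct and is precisely the definition-unwinding the authors have in mind (the paper's proof of this lemma is just the word ``Straightforward''). Both halves check out: countable approximation by $V$ and boundedness in $V\cap On$ are intrinsic to $d$ and $V$, so the first part is immediate, and the reduction of the second part to the first via Lemma 1.1 and the decomposition $V \subseteq V[G] \subseteq V[H]$ is the standard route, with the needed absoluteness of the conclusion ``$d \in V$'' between $V[G]$ and the outer model where $H$ lives.
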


\begin{proof}
Straightforward.
\end{proof}

A set $N$ of size $\omega_1$ is said to be 
\emph{internally unbounded} 
if for any countable set $a \subseteq N$, there is a countable set $b \in N$ 
such that $a \subseteq b$. 
If $N \prec H(\theta)$ for some cardinal $\theta \ge \omega_2$, then easily $N$ 
is internally unbounded iff there exists a $\subseteq$-increasing sequence 
$\langle N_i : i < \omega_1 \rangle$ of countable sets in $N$ with union 
equal to $N$.

Finally, we will need to know some facts about the $Y$-c.c.\ property of 
forcing posets, 
which is a property introduced recently in \cite{ycc}. 
The actual definition of being $Y$-c.c.\ is beyond the scope of this paper. 
In \cite{ycc} it is proven that any $Y$-c.c.\ forcing poset is $\omega_1$-c.c.\ 
and has the $\omega_1$-approximation property, and 
any finite support iteration of $Y$-c.c.\ forcing posets is 
itself $Y$-c.c. 
Also, the forcing poset $P(T)$ defined earlier in this section, 
for adding a specializing function 
for a tree which has no branches of 
length $\omega_1$, is $Y$-c.c.

\section{Guessing Models and $\mathsf{GMP}$}

Guessing models were introduced by Viale-Weiss \cite{vialeweiss}.

\begin{definition}
Let $N$ be a set. 
A set $d \subseteq N$ is said to be \emph{$N$-guessed} if there exists 
$e \in N$ such that $d = e \cap N$. 
We say that $N$ is \emph{$\omega_1$-guessing} if for any set 
$d \subseteq N \cap On$ with $\sup(d) < \sup(N \cap On)$, 
if $d$ is countably approximated by $N$, then $d$ is $N$-guessed.
\end{definition}

A typical situation which we will consider is that $N$ is an elementary substructure 
of $H(\chi)$, for some cardinal $\chi \ge \omega_2$, 
and $|N| = \omega_1$.  
In the next section, we will also consider the case that $N$ is an elementary 
substructure in an inner model over which the universe is a generic 
extension.

There is a useful characterization of being $\omega_1$-guessing in terms 
of the approximation property.

\begin{lemma}
Let $N$ be an elementary substructure of $H(\chi)$, for some 
uncountable cardinal $\chi$. 
Then the following are equivalent:
\begin{enumerate}
\item $N$ is $\omega_1$-guessing;
\item the pair $(\overline{N},V)$ has the $\omega_1$-approximation property, 
where $\overline{N}$ is the transitive collapse of $N$.
\end{enumerate}
\end{lemma}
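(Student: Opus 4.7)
The plan is to exploit the Mostowski collapse isomorphism $\pi: N \to \overline{N}$, which restricts to an order-isomorphism between $N \cap \mathrm{On}$ and $\overline{N} \cap \mathrm{On}$. Given a subset $d \subseteq N \cap \mathrm{On}$, set $d' := \pi[d] \subseteq \overline{N} \cap \mathrm{On}$, and conversely. Under this correspondence I will establish three equivalences, from which the lemma follows immediately:
\begin{enumerate}
\item[(i)] $\sup(d) < \sup(N \cap \mathrm{On})$ iff $d'$ is bounded in $\overline{N} \cap \mathrm{On}$;
\item[(ii)] $d$ is countably approximated by $N$ iff $d'$ is countably approximated by $\overline{N}$;
\item[(iii)] $d$ is $N$-guessed iff $d' \in \overline{N}$.
\end{enumerate}

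Step (i) is immediate from the order-isomorphism property of $\pi$ on ordinals. Step (iii) is the cleanest: the recursive definition of the collapse gives $\pi(e) = \{\pi(y) : y \in e \cap N\}$ for any $e \in N$, so if $d = e \cap N$ for some $e \in N$, then $d' = \pi[e \cap N] = \pi(e) \in \overline{N}$. Conversely, if $d' \in \overline{N}$, then $d' = \pi(e)$ for the unique $e \in N$ mapped onto $d'$ by $\pi$, and applying $\pi^{-1}$ to both sides of $\pi(e) = \pi[e \cap N]$ yields $d = e \cap N$.

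The main step is (ii), and its main subtlety is keeping track of the difference between $N$'s internal notion of countability and the external one. Given $a \in N$ with $N \models |a| = \omega$, elementarity of $\pi$ gives $\overline{N} \models |\pi(a)| = \omega$, and conversely every $a' \in \overline{N}$ which $\overline{N}$ models is countable is $\pi(a)$ for some such $a \in N$. Crucially, when $N \models |a| = \omega$, there is a surjection $\omega \to a$ in $N$, so $a \subseteq N$, and hence $\pi \restriction a$ is a genuine bijection onto $\pi(a)$. This yields $\pi(d \cap a) = d' \cap \pi(a)$, and since $d \cap a \in N$ iff $\pi(d \cap a) \in \overline{N}$, the two approximation conditions transfer. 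Combining (i)--(iii) closes the proof. The only part requiring any real care is (ii), and even there the argument is a straightforward unfolding of definitions through $\pi$; I do not expect any genuine obstacle.
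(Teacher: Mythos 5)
Your proposal is correct, and it is essentially the canonical argument: the paper itself gives no proof here, only a citation to Lemma~1.10 of \cite{jk26}, and that proof is exactly this transfer of boundedness, countable approximation, and guessing through the Mostowski collapse $\pi$. The one point worth making fully explicit in step~(ii) is the backward direction of ``$d \cap a \in N$ iff $\pi[d\cap a] \in \overline{N}$'': if $\pi[d\cap a] = \pi(e)$ for some $e \in N$, one must check $e = d\cap a$ rather than merely $e \cap N = d \cap a$, which follows since $\pi(e) \subseteq \pi(a)$ gives $N \models e \subseteq a$, hence $e \subseteq a \subseteq N$ by the elementarity of $N$ in $H(\chi)$ and your observation that internally countable members of $N$ are subsets of $N$.
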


\begin{proof}
See \cite[Lemma 1.10]{jk26}.
\end{proof}

Next we prove two technical lemmas about $\omega_1$-guessing models.

\begin{lemma}
Let $N$ be in $P_{\omega_2}(H(\chi))$, where 
$\chi \ge \omega_2$ is a cardinal, and 
assume that $N \prec H(\chi)$ and $N$ is $\omega_1$-guessing. 
Then for any cardinal $\theta \in N$ with uncountable cofinality, 
$\cf(\sup(N \cap \theta)) = \omega_1$.
In particular, $\omega_1 \subseteq N$, and hence 
$N \cap \omega_2 \in \omega_2$.
\end{lemma}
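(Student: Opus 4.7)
The approach is to use the transitive collapse $\sigma \colon \bar N \to N$ together with Lemma 2.2, which yields that the pair $(\bar N, V)$ has the $\omega_1$-approximation property. Write $\bar \theta := \sigma^{-1}(\theta)$; since $\bar N$ is transitive, $\bar \theta \subseteq \bar N$, and $\sigma \restriction \bar \theta$ is an order-preserving bijection from $\bar \theta$ onto $N \cap \theta$. Hence $\cf(\sup(N \cap \theta)) = \cf(\bar \theta)$ as computed in $V$. By the elementarity of $\sigma$, $\bar N$ satisfies $\cf(\bar \theta) = \cf(\theta) > \omega$, so by absoluteness every countable subset of $\bar \theta$ lying in $\bar N$ is already bounded below $\bar \theta$ in $V$.

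The core step is to show that $\cf(\bar \theta) > \omega$ holds in $V$. Suppose towards a contradiction that $\langle \bar \beta_n : n < \omega \rangle$ is a strictly increasing cofinal sequence in $\bar \theta$, and set $d := \{\bar \beta_n : n < \omega\}$, a bounded subset of $\bar N \cap \mathrm{On}$. To verify countable approximation, fix any countable $a \in \bar N$; by transitivity $a \subseteq \bar N$, and by the previous paragraph applied to $a \cap \bar \theta$ we obtain $\bar \gamma \in \bar N$ with $a \cap \bar \theta \subseteq \bar \gamma < \bar \theta$. Cofinality of the sequence forces only finitely many $\bar \beta_n$ to lie at or below $\bar \gamma$, so $a \cap d$ equals $a \cap \{\bar \beta_0, \dots, \bar \beta_{n_0-1}\}$ for some $n_0 < \omega$, which is the intersection of two sets in $\bar N$ and hence belongs to $\bar N$. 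By Lemma 2.2, $d \in \bar N$, contradicting $\bar N \models \cf(\bar \theta) > \omega$. Therefore $\cf(\bar \theta) \geq \omega_1$, and since $|\bar \theta| \leq |\bar N| \leq \omega_1$ we conclude $\cf(\bar \theta) = \omega_1$.

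For the ``in particular'' assertion I would apply the guessing property of $N$ directly to $d' := \omega_1$: this subset of $N \cap \mathrm{On}$ is bounded (since $\omega_1 + 1 \in N$ by elementarity) and countably approximated by $N$ (since $c \cap \omega_1 \in N$ for every countable $c \in N$), so there is $e \in N$ with $\omega_1 = e \cap N$, forcing $\omega_1 \subseteq N$. From this it follows that every $\alpha \in N$ with $\omega_1 \leq \alpha < \omega_2$ is contained in $N$, via any bijection $\omega_1 \to \alpha$ lying in $N$ evaluated on the elements of $\omega_1 \subseteq N$; hence $N \cap \omega_2$ is transitive, and since it has size at most $\omega_1 < \omega_2$, it must be an ordinal strictly below $\omega_2$. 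The main obstacle I expect is the countable approximation verification of $d$ in the contradiction step; its correctness rests entirely on transporting $\bar N$'s internal view of $\bar \theta$ as having uncountable cofinality down to $V$ via absoluteness of bounds on ordinals.
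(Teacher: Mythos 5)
Your proof of the main cofinality claim is correct, and it is essentially the paper's own argument transported through the transitive collapse: the paper works directly with $N$, shows that a cofinal $\omega$-sequence in $N \cap \theta$ is countably approximated by $N$, and derives a contradiction from the guessed set $e \in N$; you instead invoke Lemma 2.2 to pass to the pair $(\overline{N},V)$ and contradict $\overline{N}$'s computation of $\cf(\bar\theta)$. The verification that $a \cap d$ is finite (hence in $\overline{N}$) via upward absoluteness of boundedness is sound, so this part stands.

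The ``in particular'' step, however, is circular. To apply Definition 2.1 to $d' := \omega_1$ you must first know that $d' \subseteq N \cap On$, i.e.\ that $\omega_1 \subseteq N$ --- which is precisely the statement being proved; the guessing property simply cannot be fed the set $\omega_1$ until that inclusion is established. The intended route is to apply the part you have already proved with $\theta = \omega_1$: then $\cf(\sup(N \cap \omega_1)) = \omega_1$, and since every ordinal strictly below $\omega_1$ has countable cofinality, $\sup(N \cap \omega_1) = \omega_1$. Moreover $N \cap \omega_1$ is transitive, hence an ordinal: for any infinite $\alpha \in N \cap \omega_1$, elementarity gives a surjection $f : \omega \to \alpha$ lying in $N$, and $\omega \subseteq N$, so $\alpha = f[\omega] \subseteq N$. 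Therefore $N \cap \omega_1 = \sup(N \cap \omega_1) = \omega_1$. Your subsequent derivation of $N \cap \omega_2 \in \omega_2$ from $\omega_1 \subseteq N$ (using bijections $\omega_1 \to \alpha$ in $N$ and the bound $|N| \le \omega_1$) is correct as written.
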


\begin{proof}
Since $N$ has size at most $\omega_1$, 
$\cf(\sup(N \cap \theta)) \le \omega_1$. 
Suppose for a contradiction that $\cf(\sup(N \cap \theta)) = \omega$. 
Fix a sequence $\langle \alpha_n : n < \omega \rangle$ of ordinals 
in $N \cap \theta$ which is increasing and cofinal 
in $\sup(N \cap \theta)$. 

We claim that the set $\{ \alpha_n : n < \omega \}$ is countably 
approximated by $N$. 
Let $a \in N$ be countable, and we will show that 
$a \cap \{ \alpha_n : n < \omega \}$ is in $N$. 
By elementarity, $\sup(a \cap \theta) \in N \cap \theta$, 
so we can fix $k$ 
such that $\sup(a \cap \theta) < \alpha_k$. 
Then $a \cap \{ \alpha_n : n < \omega \} \subseteq 
a \cap \{ \alpha_n : n < k \}$. 
So $a \cap \{ \alpha_n : n < \omega \}$ is a finite subset of $N$, 
and hence is in $N$.

Since the set $\{ \alpha_n : n < \omega \}$ is countably approximated by $N$, 
and is a bounded subset of $N \cap On$, 
there exists $e \in N$ such that 
$\{ \alpha_n : n < \omega \} = N \cap e$. 
We claim that $e = \{ \alpha_n : n < \omega \}$. 
This is a contradiction, for then $\sup(e) = \sup(N \cap \theta)$ would be 
in $N$ by elementarity, which is impossible.

As $N \cap e \subseteq \theta$, by elementarity it follows 
that $e \subseteq \theta$. 
Also, for all $\beta \in N \cap \theta$, $N \cap e \cap \beta$ is finite; 
therefore by elementarity, $e \cap \beta$ must be finite. 
So $N$ models that every proper initial segment of $e$ is finite, and hence 
$e$ is at most countable. 
Therefore $e \subseteq N$, so $e = N \cap e = \{ \alpha_n : n < \omega \}$.
\end{proof}

\begin{lemma}
Let $N$ be in $P_{\omega_2}(H(\chi))$, where 
$\chi \ge \omega_2$ is a cardinal, and 
assume that $N \prec H(\chi)$. 
Let $T \in N$ be a tree with height and size $\omega_1$. 
Suppose that $W$ is an outer model of $V$ with 
$\omega_1^V = \omega_1^W$, and $N$ is $\omega_1$-guessing in $W$. 
Then every branch of $T$ in $W$ with length $\omega_1$ is in $N$.
\end{lemma}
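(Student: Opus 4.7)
The plan is to encode the branch $b$ as a bounded subset $c \subseteq \omega_1$ of $N \cap \mathrm{On}$ and invoke the $\omega_1$-guessing property of $N$ in $W$. The work is to show that $c$ is countably approximated by $N$, which in turn reduces to establishing two preliminary facts: $T \subseteq N$, and every proper initial segment of $b$ already lies in $N$.

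First I would observe that $N$ being $\omega_1$-guessing in $W$ implies the same in $V$, because the defining property refers only to sets in $N$ and to countable-approximation by $N$, which is absolute. So Lemma 2.3 applied in $V$ gives $\omega_1^V \subseteq N$, and since $\omega_1^V = \omega_1^W$, we have $\omega_1 \subseteq N$. Since $T \in N$ has size $\omega_1$ in $H(\chi)$, elementarity yields a bijection $\pi \colon \omega_1 \to T$ in $N$, and because every ordinal below $\omega_1$ lies in $N$, the range of $\pi$ is contained in $N$; hence $T \subseteq N$.

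Second, let $b$ be a branch of $T$ of length $\omega_1$ in $W$, and denote by $x_\alpha$ its $\alpha$-th element. Maximality of $b$ forces it to be downward closed in $T$ (adding a $<_T$-predecessor would preserve the chain property), so the $<_T$-predecessors of $x_\alpha$ in $T$ coincide with the predecessors of $x_\alpha$ within $b$, forcing $x_\alpha$ to have tree-level exactly $\alpha$. Thus $b \subseteq T \subseteq N$, and for every $\beta < \omega_1$ the initial segment $b \cap (T \restriction \beta) = \{u \in T : u <_T x_\beta\}$ is definable from $x_\beta \in N$ and $T \in N$, hence lies in $N$.

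Now set $c := \pi[b] \subseteq \omega_1$, a bounded subset of $N \cap \mathrm{On}$. To check that $c$ is countably approximated by $N$ in $W$, take $a \in N$ with $N \models |a| \le \omega$; I may assume $a \subseteq \omega_1$, and put $a' := \pi^{-1}[a] \subseteq T$, still countable and in $N$. The tree-levels of elements of $a'$ form a countable set of countable ordinals, so there is $\beta \in N$ with $\beta < \omega_1$ such that $a' \subseteq T \restriction \beta$. Then $a' \cap b = a' \cap (b \cap (T \restriction \beta)) \in N$, hence $a \cap c = \pi[a' \cap b] \in N$. The $\omega_1$-guessing of $N$ in $W$ now produces $e \in N$ with $c = e \cap N$; replacing $e$ by $e \cap \omega_1 \in N$ and using $\omega_1 \subseteq N$, this yields $c = e \cap \omega_1 \in N$, and therefore $b = \pi^{-1}[c] \in N$.

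The main obstacle is the gap between $V$ and $W$: although $b$ itself may be genuinely new in $W$, each proper initial segment of $b$ must lie in $N \subseteq V$. This works only because $b$ is a \emph{maximal} chain whose initial segments are pinned down by single nodes $x_\beta \in T$, and because $T$ is contained in $N$ — a fact that itself relies on the $\omega_1$-guessing of $N$ through Lemma 2.3.
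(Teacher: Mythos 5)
Your proof is correct and follows essentially the same route as the paper: establish $\omega_1 \subseteq N$ via Lemma 2.3, show the branch is countably approximated by $N$ because each countable piece of it equals the predecessor set of a single node of $T \subseteq N$, then apply guessing in $W$ and observe that the guessed set $e$ must be contained in $N$ and hence equal the branch. (The only blemish is notational: with $\pi \colon \omega_1 \to T$ you should write $\pi[a]$ and $\pi^{-1}[b]$ rather than the reverse; the paper avoids this bookkeeping by simply taking the underlying set of $T$ to be $\omega_1$.)
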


\begin{proof}
Since $N$ is $\omega_1$-guessing in $W$, it is easy to check 
that $N$ is $\omega_1$-guessing in $V$. 
So by Lemma 2.3, $\omega_1 \subseteq N$.

Without loss of generality, assume that the underlying set of $T$ 
is $\omega_1$. 
Let $b$ be a branch of $T$ of length $\omega_1$ in $W$. 
Then $b$ is a bounded subset of $N \cap On$. 
We claim that $b$ is 
countably approximated by $N$ in $W$. 
Let $a \in N$ be countable. 
Since $b$ has length $\omega_1$, we can fix $y \in b$ such that 
$a \cap b \subseteq \{ x \in T : x <_T y \}$. 
Then $a \cap b = a \cap \{ x \in T : x <_T y \}$. 
Now $N \prec H(\chi)$ in $V$, so the set 
$\{ x \in T : x <_T y \}$ is in $N$, and hence the set 
$a \cap b$ is in $N$.

As $N$ is $\omega_1$-guessing in $W$, there is 
$e \in N$ such that $N \cap e = b$. 
Since $N \cap e \subseteq T$, it follows 
by elementarity that $e \subseteq T$. 
But the underlying set of $T$ is $\omega_1$, which is a subset of $N$. 
So $e \subseteq N$. 
Hence $e = e \cap N = b$. 
So $e = b$, and $b \in N$.
\end{proof}

\begin{definition}
For a cardinal $\theta \ge \omega_2$, 
let $\textsf{GMP}(\theta)$ be the statement that there exist 
stationarily many sets $N \in P_{\omega_2}(H(\theta))$ such that 
$N$ is $\omega_1$-guessing. 
Let $\textsf{GMP}$ be the statement that $\textsf{GMP}(\theta)$ holds, 
for all cardinals $\theta \ge \omega_2$.\footnote{$\textsf{GMP}$ stands 
for \emph{guessing model principle}.}
\end{definition}

It is easy to see that if $\omega_2 \le \theta_0 < \theta_1$ are cardinals, 
$N \in P_{\omega_2}(H(\theta_1))$ is $\omega_1$-guessing, 
$N \prec H(\theta)$, and $\theta_0 \in N$, 
then $N \cap H(\theta_0)$ is $\omega_1$-guessing. 
In particular, $\textsf{GMP}(\theta_1)$ implies $\textsf{GMP}(\theta_0)$.

Weiss \cite{weiss} introduced the principle $\textsf{ISP}(\omega_2)$, 
and Viale-Weiss \cite{vialeweiss} 
showed that it is equivalent to what we are calling $\textsf{GMP}$. 
They proved that $\textsf{ISP}(\omega_2)$ follows from $\textsf{PFA}$, 
and that $\textsf{ISP}(\omega_2)$ 
implies some of the consequence of $\textsf{PFA}$, 
such as the failure of 
square principles.

The original formulation of $\textsf{ISP}(\omega_2)$ involves completely 
different concepts than guessing models, namely ineffable branches in 
slender $P_\kappa(\lambda)$-lists. 
Some of the consequences of $\textsf{ISP}(\omega_2)$ were derived in 
\cite{vialeweiss} and \cite{weiss} using these different concepts. 
We offer new proofs of two of the most important consequences of 
$\textsf{ISP}(\omega_2)$ using arguments involving guessing models, namely, 
the failure of the approachability property on $\omega_1$, and the 
nonexistence of $\omega_2$-Aronszajn trees. 
We also derive a new consequence, namely the nonexistence of 
weak $\omega_1$-Kurepa trees.

For the original proof of the next result, see 
\cite[Corollary 4.9]{vialeweiss}.

\begin{proposition}
$\textsf{GMP}(\omega_3)$ implies $\neg \textsf{AP}_{\omega_1}$.
\end{proposition}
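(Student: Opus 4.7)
The plan is to argue by contradiction, assuming both $\textsf{GMP}(\omega_3)$ and $\textsf{AP}_{\omega_1}$. Fix a witness $\vec{a} = \langle a_\alpha : \alpha < \omega_2 \rangle$ together with a club $C \subseteq \omega_2$ on which every point is approachable with respect to $\vec{a}$, and apply $\textsf{GMP}(\omega_3)$ to produce $N \in P_{\omega_2}(H(\omega_3))$ with $N \prec H(\omega_3)$, $\vec{a}, C \in N$, and $N$ being $\omega_1$-guessing. Set $\delta := N \cap \omega_2$; by Lemma 2.3 this is an ordinal below $\omega_2$ with $\cf(\delta) = \omega_1$, and a standard elementarity argument gives that $N \cap C$ is cofinal in $\delta$, so $\delta \in C$. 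Thus $\delta$ is approachable, and I extract a cofinal $A \subseteq \delta$ of order type $\omega_1$ whose proper initial segments $A \cap \beta$ (for $\beta < \delta$) all appear as $a_\gamma$ for some $\gamma < \delta$. The goal is to feed $A$ to the guessing property and then force $\delta \in N$, contradicting $N \cap \omega_2 = \delta$.

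The first substantive step is to verify that $A$ is countably approximated by $N$. Given a countable $a \in N$, elementarity places $\beta := \sup(a \cap \omega_2) + 1$ in $N \cap \omega_2 = \delta$, and then by approachability there is $\gamma < \delta$ with $A \cap \beta = a_\gamma$; since $\gamma \in N$ and $\vec{a} \in N$, we get $a_\gamma \in N$, so $a \cap A = a \cap a_\gamma \in N$. Since $\sup(A) = \delta$ lies strictly below $\sup(N \cap On)$, the $\omega_1$-guessing property yields $e \in N$ with $A = e \cap N$. Replacing $e$ by $e \cap \omega_2 \in N$, I may assume $e \subseteq \omega_2$, so that $A = e \cap \delta$.

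What remains is to show $A \in N$, since then elementarity forces $\delta = \sup(A) \in N$, a contradiction. If $e$ is bounded in $\omega_2$, then $\sup(e) + 1 \in N \cap \omega_2 = \delta$, contradicting $\sup(e) \ge \sup(A) = \delta$. Otherwise $e$ is cofinal in $\omega_2$, hence of order type $\omega_2$, and the ordinal $\beta^\ast := \min \{\alpha < \omega_2 : \ot(e \cap \alpha) = \omega_1\}$ is definable from $e$ and $\omega_1$ in $N$, so $\beta^\ast \in N \cap \omega_2 = \delta$. Both $e \cap \beta^\ast$ and $A = e \cap \delta$ are initial segments of $e$ of order type $\omega_1$, so they coincide, giving $A = e \cap \beta^\ast \in N$.

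The step I expect to require the most care is the extraction of $A$ from the guessed set $e$, since $e$ could a priori have a complicated shape above $\delta$. The key point is that inside $N$ the first $\omega_1$ elements of $e$ are pinned down by the definable ordinal $\beta^\ast$, and these must coincide with $A$ because both sit as an initial segment of order type $\omega_1$ inside $e$.
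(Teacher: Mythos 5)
Your proof is correct and follows essentially the same route as the paper's: feed the approachability witness at $\delta = N\cap\omega_2$ to the guessing property, get $e\in N$ with $e\cap N$ equal to that witness, and use elementarity to locate an initial segment of $e$ of order type $\omega_1$ inside $N$. The only cosmetic difference is the endgame — you conclude $A\in N$ and hence $\delta\in N$, while the paper contradicts the order type of $c_\alpha$ directly — but both hinge on the same observation about the $\omega_1$-th initial segment of $e$.
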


\begin{proof}
Suppose for a contradiction that the approachability property 
$\textsf{AP}_{\omega_1}$ holds. 
So there exists a sequence 
$\vec a = \langle a_i : i < \omega_2 \rangle$ of countable subsets of 
$\omega_2$, a club $C \subseteq \omega_2$, and 
a sequence $\vec c = \langle c_\alpha : \alpha \in C \cap \cof(\omega_1) \rangle$ 
such that for all $\alpha \in C \cap \cof(\omega_1)$, 
$c_\alpha$ is a club subset of $\alpha$ with order type $\omega_1$, 
and for all $\beta < \alpha$, there is $i < \alpha$ such that 
$c_\alpha \cap \beta = a_i$.

Fix $N$ in $P_{\omega_2}(H(\omega_3))$ such that 
$N \prec H(\omega_3)$, $\vec a$, $C$, and $\vec c$ are in $N$, 
and $N$ is $\omega_1$-guessing. 
Since $C \in N$, $N \cap \omega_2 \in C$. 
As $\omega_2 \in N$, it follows that 
$\cf(N \cap \omega_2) = \omega_1$ by Lemma 2.3. 
So $N \cap \omega_2 \in C \cap \cof(\omega_1)$. 

Let $\alpha := N \cap \omega_2$. 
We claim that $c_\alpha$ is countably approximated by $N$. 
Let $a \in N$ be countable. 
Then for some $\beta < \alpha$, $c_\alpha \cap a \subseteq \beta$. 
Fix $i < \alpha$ such that $c_\alpha \cap \beta = a_i$. 
Then $i \in N$, and hence $c_\alpha \cap \beta = a_i \in N$. 
So $c_\alpha \cap a = c_\alpha \cap \beta \cap a = 
a_i \cap a$, which is in $N$ since $a$ and $a_i$ are in $N$.

As $N$ is $\omega_1$-guessing, we can 
fix $e \in N$ such that $e \cap N = c_\alpha$. 
Since $N \cap e \subseteq \omega_2$, it follows that 
$e \subseteq \omega_2$ by elementarity. 
And as $N \cap e = c_\alpha$ is cofinal in $N \cap \omega_2$, 
$e$ is cofinal in $\omega_2$ by elementarity. 
Therefore $e$ has order type $\omega_2$. 
By elementarity, fix $\gamma \in N \cap e$ such that 
$e \cap \gamma$ has order type $\omega_1$. 
Since $\omega_1 \subseteq N$, $e \cap \gamma \subseteq N$, 
so $e \cap \gamma = e \cap N \cap \gamma = c_\alpha \cap \gamma$. 
So $c_\alpha \cap \gamma$ has order type $\omega_1$, which contradicts 
that $c_\alpha$ has order type $\omega_1$ and $\gamma < \alpha$.
\end{proof}

The next result was originally proven in \cite[Section 2]{weiss}.

\begin{thm}
$\textsf{GMP}(\omega_3)$ implies that there does not exist 
an $\omega_2$-Aronszajn tree.
\end{thm}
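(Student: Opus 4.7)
The plan is to argue by contradiction, using an $\omega_1$-guessing model to extract a cofinal branch from an assumed $\omega_2$-Aronszajn tree $T$. First I would fix a convenient coding of $T$: rearrange its universe to be $\omega_2$ so that, for every $\beta \le \omega_2$, the initial segment $T \restriction \beta$ is itself an ordinal (enumerate level by level, lowest levels first). Applying $\textsf{GMP}(\omega_3)$, pick an $\omega_1$-guessing $N \in P_{\omega_2}(H(\omega_3))$ with $N \prec H(\omega_3)$ and $T \in N$. By Lemma 2.3, $\alpha := N \cap \omega_2 \in \omega_2$, $\alpha \subseteq N$, and $\cf(\alpha) = \omega_1$. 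The coding now gives, for each $\beta \in N \cap \omega_2$, that $T \restriction \beta$ is an ordinal in $N \cap \omega_2 = \alpha$, hence a subset of $N$; in particular, every level $T_\beta$ with $\beta < \alpha$ is contained in $N$. Since $T$ has height $\omega_2 > \alpha$, pick any $t \in T_\alpha$ and set $b := \{ u \in T : u <_T t \}$, a chain of order type $\alpha$ sitting inside $T \restriction \alpha \subseteq \alpha \subseteq N \cap On$, with $\sup(b) \le \alpha < \sup(N \cap On)$.

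The main technical step is to verify that $b$ is countably approximated by $N$. Given a countable $a \in N$, we have $a \subseteq N$, so for each $u \in a \cap T$ the level $\ell(u)$ lies in $N \cap \omega_2 = \alpha$. Since $\cf(\alpha) = \omega_1$, these countably many levels are bounded below $\alpha$; pick $\gamma \in \alpha$ strictly above their supremum, and let $t_\gamma \in T_\gamma$ be the unique predecessor of $t$ at level $\gamma$. By the coding, $T_\gamma \subseteq N$, so $t_\gamma \in N$, and then $a \cap b = a \cap \{ u \in T : u <_T t_\gamma \}$ is defined from parameters in $N$ and hence lies in $N$.

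Applying the $\omega_1$-guessing property of $N$ yields $e \in N$ with $e \cap N = b$. A standard elementarity argument forces $e \subseteq T$ (any $u \in e \setminus T$ would reflect into $N \cap e \subseteq T$) and shows $e$ is a chain in $T$ (any pair of incompatible elements of $e$ would reflect into $b$, which is linearly ordered). Let $\eta \in N$ be the order type of $e$ under $<_T$; since $b \subseteq e$ has order type $\alpha$, $\eta \ge \alpha$. If $\eta < \omega_2$ then $\eta \in N \cap \omega_2 = \alpha$, contradicting $\eta \ge \alpha$; hence $\eta \ge \omega_2$, and by elementarity $e$ contains a chain of order type $\omega_2$, whose downward closure is a branch of $T$ of length $\omega_2$, contradicting the Aronszajn hypothesis. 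The main obstacle --- and the reason for the initial coding convention --- is arranging $T_\gamma \subseteq N$ for $\gamma \in N \cap \omega_2$, which puts the predecessor $t_\gamma$ into $N$ and allows the countable-approximation calculation to close.
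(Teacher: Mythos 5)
Your proof is correct and follows essentially the same route as the paper: fix an $\omega_1$-guessing $N$ with $T \in N$, let $d$ be the set of predecessors of a node at level $\alpha = N \cap \omega_2$, use $\cf(\alpha)=\omega_1$ (Lemma 2.3) together with $T \restriction \alpha \subseteq N$ to verify countable approximation, and then show the guessed set $e$ is a chain of length $\omega_2$ by elementarity. The only cosmetic difference is that your initial coding convention is unneeded --- the paper gets $T_\beta \subseteq N$ for $\beta < \alpha$ directly from $T_\beta \in N$, $|T_\beta| \le \omega_1$, and $\omega_1 \subseteq N$.
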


\begin{proof}
Let $T$ be a tree of height $\omega_2$, all of whose levels have 
cardinality less than $\omega_2$. 
We will prove that there is a branch of $T$ with order type $\omega_2$. 
Without loss of generality, assume that $T$ has underlying set $\omega_2$, 
so that $T \in H(\omega_3)$. 
Fix $N$ in $P_{\omega_2}(H(\omega_3))$ such that 
$N \prec H(\omega_3)$, $T \in N$, and $N$ is $\omega_1$-guessing.

Let $\alpha := N \cap \omega_2$. 
For all $\beta < \alpha$, $T_\beta$ is in $N$ by elementarity, and since 
$T_\beta$ has size at most $\omega_1$, 
$T_\beta \subseteq N$. 
It follows that $T \restriction \alpha \subseteq N$. 

Fix a node $y$ on level $\alpha$ of $T$. 
Let $d := \{ x \in T : x <_T y \}$. 
Then $d$ is a bounded subset of $N \cap On$. 
We claim that $d$ is countably approximated by $N$. 
Let $a \in N$ be countable. 
Since $\alpha$ has cofinality $\omega_1$ by Lemma 2.3, 
there is $y^* <_T y$ such that 
$a \cap d = a \cap \{ x \in T : x <_T y^* \}$. 
As $a$ and $y^*$ are in $N$, so is $a \cap d$.

Since $N$ is $\omega_1$-guessing, we can fix $e \in N$ such that 
$e \cap N = d$. 
We claim that $e$ is a branch of $T$ with length $\omega_2$, which 
completes the proof. 
First, if $x$ and $y$ are in $e \cap N$, then 
$x$ and $y$ are in $d$, and since $d$ is a branch, either 
$x \le_T y$ or $y <_T x$. 
By elementarity, $e$ is a chain in $T$. 
Similarly, if $x \in e \cap N$, $x_0 \in N \cap T$, and 
$x_0 <_T x$, then 
$x_0 \in d$ and hence $x_0 \in d \subseteq e$. 
By elementarity, $e$ is closed downwards. 
If $e$ does not have order type $\omega_2$, then by elementarity there is 
$\beta < N \cap \omega_2 = \alpha$ such that 
$e \subseteq T \restriction \beta$. 
But the node in $d$ at level $\beta$ is in $e$, which is a contradiction.
\end{proof}

\begin{thm}
$\textsf{GMP}(\omega_2)$ implies that there does not exist a 
weak $\omega_1$-Kurepa tree.
\end{thm}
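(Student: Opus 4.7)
The plan is to reduce the statement almost immediately to Lemma 2.4. Suppose toward a contradiction that $T$ is a weak $\omega_1$-Kurepa tree. Since $T$ has size $\omega_1$, we may assume its underlying set is $\omega_1$, so that $T \in H(\omega_2)$. By assumption, $T$ has some collection $\mathcal{B}$ of branches of length $\omega_1$ with $|\mathcal{B}| > \omega_1$.

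Using $\textsf{GMP}(\omega_2)$, fix $N \in P_{\omega_2}(H(\omega_2))$ such that $N \prec H(\omega_2)$, $T \in N$, and $N$ is $\omega_1$-guessing. Now apply Lemma 2.4 with $\chi = \omega_2$ and $W = V$: since $T \in N$ is a tree of height and size $\omega_1$ and $N$ is $\omega_1$-guessing, every branch of $T$ in $V$ of length $\omega_1$ lies in $N$. In particular, $\mathcal{B} \subseteq N$.

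But $|N| \le \omega_1$, so $|\mathcal{B}| \le \omega_1$, contradicting the choice of $\mathcal{B}$.

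There is no real obstacle here; the content has been absorbed into Lemma 2.4 (whose proof in turn relied on Lemma 2.3 to get $\omega_1 \subseteq N$, which is what forces each guessed branch to actually equal its witness $e \in N$). It is worth noting that, in contrast to Proposition 2.6 and Theorem 2.7, only $\textsf{GMP}(\omega_2)$ is required, because $T$ itself already lives in $H(\omega_2)$ and no ambient $\omega_2$-sized object needs to be reflected.
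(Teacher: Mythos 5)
Your proposal is correct and matches the paper's proof essentially verbatim: both reduce immediately to Lemma~2.4 (applied with $W = V$) to conclude that all $\omega_1$-branches of $T$ lie in $N$, and then derive a contradiction from $|N| \le \omega_1$. No further comment is needed.
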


\begin{proof}
Suppose for a contradiction that $T$ is a weak $\omega_1$-Kurepa tree. 
Without loss of generality, assume that the underlying set of $T$ 
is $\omega_1$. 
Then $T \in H(\omega_2)$. 
Fix a set $N \in P_{\omega_2}(H(\omega_2))$ such that 
$N \prec H(\omega_2)$, $T \in N$, and $N$ is $\omega_1$-guessing. 
By Lemma 2.4, every branch of $T$ with length $\omega_1$ is in $N$. 
But this is impossible, since $N$ has size $\omega_1$ and 
$T$ has more than $\omega_1$ many branches 
of length $\omega_1$.
\end{proof}

Note that if $\textsf{CH}$ holds, then there exists a 
weak $\omega_1$-Kurepa tree, namely the tree of 
functions in $2^{<\omega_1}$. 
Hence $\textsf{GMP}(\omega_2)$ implies that $\textsf{CH}$ fails. 
Viale-Weiss \cite{vialeweiss} asked whether $\textsf{GMP}$ implies 
that $2^\omega$ is equal to $\omega_2$. 
This question was settled in \cite{jk26}, where we showed that 
$\textsf{GMP}$ is consistent with $2^\omega$ being equal to 
any given cardinal $\lambda \ge \omega_2$ of uncountable cofinality.

Let us make an additional observation about the model constructed in \cite{jk26}. 
Recall that the \emph{pseudo-intersection number} $\mathfrak{p}$ is the least 
size of a collection $X$ of infinite subsets of $\omega$, closed under finite 
intersections, for which there is no set $b$ such that 
$b \setminus a$ is finite for all $a \in X$. 
Viale \cite[Lemma 4.2]{viale} 
proved that under the assumption that $\omega_1 < \mathfrak{p}$, 
if $\chi \ge \omega_2$ is a regular cardinal, 
$N \in P_{\omega_2}(H(\chi))$, $N \prec H(\chi)$, 
and $N$ is $\omega_1$-guessing, then $N$ is internally unbounded.

Viale \cite[Remark 4.3]{viale} asked whether it is consistent that there 
are stationarily many 
$\omega_1$-guessing models in a model where $\mathfrak{p} = \omega_1$. 
We point out that in the model constructed in \cite{jk26}, 
\textsf{GMP} holds and $\mathfrak{p} = \omega_1$, which settles this 
question. 
Namely, the model of \cite{jk26} 
is obtained by forcing with a forcing poset of the form 
$\p * \textrm{Add}(\omega,\lambda)$, where $\lambda > \omega_1$. 
But by \cite[Section 11.3]{blass}, any model obtained by forcing with 
$\textrm{Add}(\omega,\lambda)$, where $\lambda \ge \omega_1$, 
satisfies that $\mathfrak{p} = \omega_1$.

\section{Indestructibly Guessing Models and $\mathsf{IGMP}$}

We introduce a stronger form of $\omega_1$-guessing, 
and with it a new principle.

\begin{definition}
A set $N$ is \emph{indestructibly $\omega_1$-guessing} if 
for any forcing poset $\p$ which preserves $\omega_1$, 
$\p$ forces that $N$ is $\omega_1$-guessing.
\end{definition}

\begin{definition}
For a cardinal $\theta \ge \omega_2$, let 
$\textsf{IGMP}(\theta)$ be the statement that there exist 
stationarily many sets $N \in P_{\omega_2}(H(\theta))$ such that 
$N$ is indestructibly $\omega_1$-guessing. 
Let $\textsf{IGMP}$ be the statement that 
$\textsf{IGMP}(\theta)$ holds, for all cardinals $\theta \ge \omega_2$.\footnote{$\textsf{IGMP}$ 
stands for \emph{indestructibly guessing model principle}.}
\end{definition}

It is easy to see that if $\omega_2 \le \theta_0 < \theta_1$ are cardinals, 
$N \in P_{\omega_2}(H(\theta_1))$ is indestructibly $\omega_1$-guessing, 
$N \prec H(\theta_1)$, and $\theta_0 \in N$, then $N \cap H(\theta_0)$ is 
indestructibly $\omega_1$-guessing. 
In particular, $\textsf{IGMP}(\theta_1)$ implies $\textsf{IGMP}(\theta_0)$.

We will prove in Section 4 that $\textsf{PFA}$ implies 
$\textsf{IGMP}$.\footnote{The original proof of 
\cite[Section 4]{vialeweiss} that 
$\textsf{PFA}$ implies $\textsf{ISP}(\omega_2)$ implicitly shows that 
$\textsf{PFA}$ implies $\textsf{IGMP}$, although they did not 
formulate this principle.}

It turns out that indestructibly $\omega_1$-guessing models are 
internally unbounded. 
The proof is a variation of the proof of \cite[Lemma 4.2]{viale} that 
if $\mathfrak{p} > \omega_1$ and $N$ is $\omega_1$-guessing, then 
$N$ is internally unbounded.

\begin{proposition}
Let $\theta \ge \omega_2$ be a cardinal. 
Suppose that $N \in P_{\omega_2}(H(\theta))$, 
$N \prec H(\theta)$, $\cf(\sup(N \cap \theta)) = \omega_1$, 
and $N$ is indestructibly $\omega_1$-guessing. 
Then $N$ is internally unbounded.
\end{proposition}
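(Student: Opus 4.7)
The plan is to argue by contradiction. Suppose $N$ is not internally unbounded, and fix a countable set $a \subseteq N$ not contained in any countable $b \in N$. By a standard reduction (using that $N \prec H(\theta)$ provides Skolem-like codings of elements of $N$ into $N \cap On$), I may assume $a \subseteq N \cap \theta$ consists of ordinals. Since $a$ is countable and $\cf(\sup(N \cap \theta)) = \omega_1$, the set $a$ is bounded in $N \cap \theta$, hence in $N \cap On$.

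I will then define an $\omega_1$-c.c.\ Mathias-style forcing $\p$ whose generic yields an infinite set $d \subseteq a$ with $d \cap c$ finite for every countable $c \in N$. The conditions of $\p$ are pairs $(s, F)$, where $s$ is a finite partial function from $\omega$ to $a$ and $F$ is a finite subset of $[N]^{\le \omega} \cap N$, ordered by $(s', F') \le (s, F)$ iff $s' \supseteq s$, $F' \supseteq F$, and $s'(n) \notin \bigcup F$ for all $n \in \dom(s') \setminus \dom(s)$. That $\p$ is $\omega_1$-c.c.\ follows since $|a^{<\omega}| = \omega$ forces two conditions in any $\omega_1$-sized family to share a first coordinate, and such a pair is compatible via the union of their second coordinates. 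The key density facts --- that the generic $f$ has domain $\omega$ with infinite range, and $\ran(f) \cap c$ is finite for every countable $c \in N$ --- both follow from the observation that $\bigcup F$ is a countable element of $N$, which by the choice of $a$ cannot contain $a$.

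Setting $d := \ran(f)$, $d$ is a bounded subset of $N \cap On$ and is countably approximated by $N$ in $V[G]$ (each $c \cap d$ is finite, hence in $N$). Since $\p$ preserves $\omega_1$, indestructibility yields that $N$ is $\omega_1$-guessing in $V[G]$, so there exists $e \in N$ with $e \cap N = d$. If $e$ is countable, elementarity gives a bijection $\omega \to e$ in $N$, whence $e \subseteq N$ and $d = e \in N \subseteq V$; this contradicts a standard genericity argument showing $d \notin V$ (any condition forcing a specific value of $\ran(\dot f)$ admits an extension by adjoining a singleton $\{x\}$ to $F$, forcing $x \notin \ran(\dot f)$ and contradicting the supposed equality). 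If $e$ is uncountable, then $|e| \ge \omega_1$, and combining $\omega_1 \subseteq N$ with an $N$-internal injection $\omega_1 \to e$ yields $|N \cap e| \ge \omega_1$, contradicting the countability of $d = N \cap e$.

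The main obstacle is the correct setup of $\p$: it must be $\omega_1$-preserving while producing a generic $d$ enjoying both boundedness (to invoke $\omega_1$-guessing) and countable approximability by $N$ (to force $d$ to be $N$-guessed). The hypothesis $\cf(\sup(N \cap \theta)) = \omega_1$ is used precisely to ensure $d$ remains bounded, while the failure of internal unboundedness is what powers the density argument making $d$ infinite and its intersections with countable $c \in N$ finite.
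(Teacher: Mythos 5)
Your proposal is correct and follows essentially the same route as the paper: assume a bad countable $a \subseteq N$, reduce to ordinals, use the cofinality hypothesis to get boundedness, force with an $\omega_1$-c.c.\ Mathias-type poset to add a new infinite $d \subseteq a$ having finite intersection with every countable element of $N$, and then use indestructible guessing to produce $e \in N$ with $e \cap N = d$, which forces $d \in V$, a contradiction. The only difference is cosmetic: the paper invokes the standard Mathias forcing for the filter base $\{a \setminus y : y \in P_{\omega_1}(N) \cap N\}$ (citing Blass for the chain condition), whereas you write out an equivalent forcing explicitly and verify the density and genericity facts by hand.
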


\begin{proof}
Suppose for a contradiction that $x$ is a countable subset of $N$ 
which is not covered by any set in $P_{\omega_1}(N) \cap N$. 
Without loss of generality, $x$ is a set of ordinals. 
Since $\cf(N \cap On) = \cf(N \cap \theta) = \omega_1$, 
$x$ is bounded in $\sup(N \cap On)$. 
Then easily $F = \{ x \setminus y : y \in P_{\omega_1}(N) \cap N \}$ 
is a collection of infinite subsets of $x$ which is closed under finite intersections.

Let $\p$ be the $\omega_1$-c.c.\ Mathias forcing for 
adding a pseudointersection to $F$, that is, a subset of $x$ which 
is almost contained modulo finite in every member of $F$ 
(see, for example, \cite[Theorem 7.7]{blass}). 
Let $G$ be a $V$-generic filter on $\p$. 
Let $b$ be the pseudointersection given by $G$. 
Then $b \notin V$.

We claim that $b$ is countably approximated by $N$. 
Let $a$ be a countable set in $N$. 
Then $a \in P_{\omega_1}(N) \cap N$, so $x \setminus a \in F$. 
Hence $b \setminus (x \setminus a) = b \cap a$ is finite. 
As $b \cap a$ is a finite subset of $N$, it is in $N$. 
Since $N$ is indestructibly $\omega_1$-guessing and $\p$ 
is $\omega_1$-c.c., $N$ is $\omega_1$-guessing in $V[G]$. 
As $b$ is a bounded subset of $N \cap On$ which is 
countably approximated by $N$, we can 
fix $e \in N$ such that $e \cap N = b$. 
Note that $e$ is countable, for otherwise by elementarity 
$e \cap N$ would be uncountable, contradicting that $e \cap N = b$ 
and $b$ is countable. 
Therefore $e \subseteq N$, so $e = e \cap N = b$. 
But this is impossible since $e \in V$ and $b \notin V$.
\end{proof}

\begin{corollary}
Let $\theta \ge \omega_2$ be a cardinal. 
Then $\textsf{IGMP}(\theta^+)$ implies that there are stationarily 
many $N \in P_{\omega_2}(H(\theta))$ such that 
$N$ is internally unbounded and indestructibly $\omega_1$-guessing.
\end{corollary}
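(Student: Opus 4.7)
My plan is to derive Corollary 3.4 from $\textsf{IGMP}(\theta^+)$ by pulling indestructibly $\omega_1$-guessing models down one level via the intersection $M\mapsto M\cap H(\theta)$, and then invoking Proposition 3.3 to supply internal unboundedness.

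Given a function $F\colon [H(\theta)]^{<\omega}\to H(\theta)$ witnessing an arbitrary club in $P_{\omega_2}(H(\theta))$, I would extend $F$ arbitrarily to $F^+\colon [H(\theta^+)]^{<\omega}\to H(\theta^+)$ (say, by letting $F^+$ agree with $F$ on $[H(\theta)]^{<\omega}$ and take a fixed default value elsewhere). By $\textsf{IGMP}(\theta^+)$, the set of indestructibly $\omega_1$-guessing $M\in P_{\omega_2}(H(\theta^+))$ is stationary in $P_{\omega_2}(H(\theta^+))$, so I can pick such an $M$ that is in addition elementary in $H(\theta^+)$, contains $\theta$, and is closed under $F^+$. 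Setting $N:=M\cap H(\theta)$, standard elementary-submodel arguments give $N\prec H(\theta)$ and $N\in P_{\omega_2}(H(\theta))$; closure of $N$ under $F$ follows because every $F$-value on a finite subset of $N\subseteq M$ lies in $M\cap H(\theta)=N$; and the observation immediately after Definition 3.2 (applied with $\theta_0=\theta$, $\theta_1=\theta^+$) gives that $N$ is indestructibly $\omega_1$-guessing.

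The remaining task is to show $N$ is internally unbounded, for which I apply Proposition 3.3 directly to $N$. Its critical hypothesis is $\cf(\sup(N\cap\theta))=\omega_1$, and this cofinality verification is where I expect the main obstacle to lie. To handle it, I would apply Lemma 2.3 to the model $M\prec H(\theta^+)$: since $\theta\in M$ has uncountable cofinality and $M$ is (indestructibly, hence in particular) $\omega_1$-guessing, Lemma 2.3 delivers $\cf(\sup(M\cap\theta))=\omega_1$. Because $\theta\subseteq H(\theta)$, the identity $M\cap\theta=N\cap\theta$ holds, so $\cf(\sup(N\cap\theta))=\omega_1$, and Proposition 3.3 then yields that $N$ is internally unbounded. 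This exhibits an $N$ in the given club of $P_{\omega_2}(H(\theta))$ with the desired properties, establishing stationarity.
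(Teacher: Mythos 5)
Your proposal is correct and follows essentially the same route as the paper's own proof: intersect an indestructibly $\omega_1$-guessing $M \prec H(\theta^+)$ with $H(\theta)$, use the remark after Definition 3.2 to preserve indestructible guessing, and combine Lemma 2.3 (giving $\cf(\sup(M\cap\theta))=\omega_1$) with Proposition 3.3 to get internal unboundedness. The only difference is that you make the stationarity bookkeeping via $F$ and $F^+$ explicit, which the paper leaves implicit.
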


\begin{proof}
If $N \in P_{\omega_2}(H(\theta^+))$, $N \prec H(\theta^+)$, and 
$N$ is $\omega_1$-guessing, then 
$\sup(N \cap \theta)$ has cofinality $\omega_1$ by Lemma 2.3. 
So if $N$ is indestructibly $\omega_1$-guessing, then by the comments 
after Definition 3.2 and Proposition 3.3, 
$N \cap H(\theta)$ is an elementary substructure of 
$H(\theta)$, is indestructibly $\omega_1$-guessing, and is 
internally unbounded.
\end{proof}

\begin{corollary}
$\textsf{IGMP}$ implies $\textsf{SCH}$.
\end{corollary}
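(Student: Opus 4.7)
The strategy is to combine Corollary~3.5 with Viale's argument that stationarily many internally unbounded $\omega_1$-guessing models already suffice to derive $\textsf{SCH}$. Since $\textsf{IGMP}$ asserts $\textsf{IGMP}(\theta)$ for every cardinal $\theta \ge \omega_2$, in particular $\textsf{IGMP}(\theta^+)$ holds for every $\theta \ge \omega_2$, and hence by Corollary~3.5 there are, for every such $\theta$, stationarily many $N \in P_{\omega_2}(H(\theta))$ which are simultaneously internally unbounded and indestructibly $\omega_1$-guessing. In particular these $N$ are internally unbounded and $\omega_1$-guessing, which is precisely the hypothesis that Viale uses in \cite{viale}.

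Concretely, I would assume toward contradiction that $\textsf{SCH}$ fails and, via Silver's theorem, fix a singular cardinal $\mu$ of cofinality $\omega$ with $2^\omega < \mu < \mu^\omega$. Choose a regular cardinal $\theta$ much larger than $\mu$, and use the observation above to produce $N \prec H(\theta)$ with $\mu \in N$ which is indestructibly $\omega_1$-guessing and internally unbounded, witnessed by some $\subseteq$-increasing continuous chain $\langle N_i : i < \omega_1 \rangle$ of countable sets in $N$ with union $N$. Following Viale's covering argument, one applies the guessing property of $N$ to a suitable scale on a product of regular cardinals cofinal in $\mu$; pushing this down to $V$ via the chain $\langle N_i : i < \omega_1\rangle$ produces a cofinal family in $([\mu]^\omega, \subseteq)$ of size $\mu^+$, contradicting the choice of $\mu$.

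The main obstacle is the pcf-theoretic combinatorics inside Viale's argument, which is delicate, but no strengthening of the hypothesis beyond the existence of stationarily many internally unbounded $\omega_1$-guessing models is required. Thus the argument transfers verbatim from \cite{viale} once Corollary~3.5 is invoked; the additional indestructibility built into $\textsf{IGMP}$ plays no role in this corollary beyond supplying internal unboundedness through Proposition~3.3.
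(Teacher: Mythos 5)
Your proposal is correct and is essentially the paper's own proof: invoke the fact (Corollary 3.4 in the paper, derived from Proposition 3.3) that $\textsf{IGMP}$ yields stationarily many internally unbounded $\omega_1$-guessing models in $P_{\omega_2}(H(\theta))$ for all $\theta$, and then cite Viale's theorem that this suffices for $\textsf{SCH}$; the paper does exactly this, quoting \cite[Theorem 7.9]{viale} rather than re-running the pcf argument. Note only that the result you label ``Corollary 3.5'' is the statement being proved here, so the reference should be to the preceding corollary on internally unbounded indestructibly guessing models.
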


\begin{proof}
By \cite[Theorem 7.9]{viale}, $\textsf{SCH}$ holds provided that for 
all regular $\theta \ge \omega_2$, there are stationarily many 
$N \in P_{\omega_2}(H(\theta))$ such that $N$ is internally unbounded 
and $\omega_1$-guessing. 
This statement hold under $\textsf{IGMP}$ by Corollary 3.4.
\end{proof}

Now we move towards proving that $\textsf{IGMP}$ implies the 
Suslin hypothesis.

\begin{thm}
Assume $\textsf{IGMP}(\omega_2)$. 
Let $T$ be a tree with height and size $\omega_1$. 
Assume that $\p$ is a forcing poset which preserves $\omega_1$. 
Then $\p$ does not add any new branches of length $\omega_1$ to $T$.
\end{thm}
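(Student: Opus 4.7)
The plan is to derive the statement as a direct application of Lemma 2.4 combined with $\textsf{IGMP}(\omega_2)$. Since $T$ has size $\omega_1$, we may assume without loss of generality that the underlying set of $T$ is $\omega_1$, so $T \in H(\omega_2)$.

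First I would use $\textsf{IGMP}(\omega_2)$ to choose $N \in P_{\omega_2}(H(\omega_2))$ such that $N \prec H(\omega_2)$, $T \in N$, and $N$ is indestructibly $\omega_1$-guessing. The existence of such an $N$ follows from the fact that the collection of $M \in P_{\omega_2}(H(\omega_2))$ with $M \prec H(\omega_2)$ and $T \in M$ is a club, so stationarily many indestructibly $\omega_1$-guessing models must land in it. This is the only place where the hypothesis $\textsf{IGMP}(\omega_2)$ is used, and it suffices precisely because $T$ has size $\omega_1$ and therefore belongs to $H(\omega_2)$.

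Next, let $G$ be a $V$-generic filter on $\p$ and let $b \in V[G]$ be any branch of $T$ of length $\omega_1$. Since $\p$ preserves $\omega_1$, we have $\omega_1^V = \omega_1^{V[G]}$. Since $N$ is indestructibly $\omega_1$-guessing and $\p$ preserves $\omega_1$, $\p$ forces that $N$ is $\omega_1$-guessing, so $N$ is $\omega_1$-guessing in $V[G]$. Applying Lemma 2.4 with $W := V[G]$, $\chi := \omega_2$, and the tree $T \in N$ of height and size $\omega_1$, we conclude that $b \in N$. Because $N$ is a set in $V$, $N \subseteq V$, and therefore $b \in V$. This shows every $\omega_1$-branch of $T$ in $V[G]$ already lies in $V$, which is the desired conclusion.

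There is no real obstacle here: the entire proof is essentially a repackaging of Lemma 2.4. The substantive content was front-loaded into the earlier development, namely the notion of indestructibility (ensuring $N$ remains $\omega_1$-guessing after forcing with $\p$) and Lemma 2.4 itself (whose proof uses the guessing property of $N$ in $W$ to pull the branch back into $N$). The only point worth stressing is that $\textsf{IGMP}$ at the level $\omega_2$, rather than some higher $\theta$, is enough, because trees of size $\omega_1$ live in $H(\omega_2)$.
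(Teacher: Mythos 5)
Your proof is correct and follows exactly the paper's argument: reduce to $T$ with underlying set $\omega_1$, pick an indestructibly $\omega_1$-guessing $N \prec H(\omega_2)$ containing $T$ via $\textsf{IGMP}(\omega_2)$, and apply Lemma 2.4 in the extension $V[G]$ to pull any new $\omega_1$-branch into $N \subseteq V$. The only difference is that you spell out the verification of Lemma 2.4's hypotheses ($\omega_1$ preserved, $N$ still guessing) which the paper leaves implicit.
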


\begin{proof}
Without loss of generality, assume that the underlying set of $T$ is $\omega_1$. 
Then $T \in H(\omega_2)$. 
By $\textsf{IGMP}(\omega_2)$, we can fix $N \in P_{\omega_2}(H(\omega_2))$ 
such that $N \prec H(\omega_2)$, $T \in N$, and $N$ is 
indestructibly $\omega_1$-guessing. 
By Lemma 2.4, every branch of $T$ with length $\omega_1$ 
in a generic extension by $\p$ is in $N$, and hence in $V$.
\end{proof}

Let us say that a tree $T$ is \emph{nontrivial} if 
(1) for all $t \in T$, there are incomparable $u, v$ in $T$ with 
$t \le_T u,v$, and (2) 
for all $t \in T$ and for all $\alpha$ less than the height of $T$, 
there is $u \in T$ with height at least $\alpha$ such that $t \le_T u$.

Suppose that $T$ is a nontrivial tree. 
Define $\p_T$ as the forcing poset whose 
conditions are nodes in $T$, ordered by 
$t \le_{\p_T} s$ iff $s \le_T t$. 
The assumption of $T$ being nontrivial implies that the forcing poset $\p_T$ 
adds a branch of $T$ which is not in the ground model with length equal to 
the height of $T$.

\begin{thm}
$\textsf{IGMP}(\omega_2)$ implies that for any nontrivial tree 
$T$ with height and size $\omega_1$, $\p_T$ collapses $\omega_1$.
\end{thm}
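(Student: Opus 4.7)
The plan is to argue by contradiction using Theorem 3.6. Suppose, for a contradiction, that $\p_T$ preserves $\omega_1$. I will show that $\p_T$ forces the existence of a cofinal branch of $T$ which is not in $V$, contradicting Theorem 3.6.

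First, observe that the generic filter $G$ on $\p_T$ is, as a subset of $T$, a chain which is downward closed under $<_T$: compatibility of two conditions in $\p_T$ is exactly $T$-comparability, and the upward closure of $G$ under $\le_{\p_T}$ translates to downward closure under $\le_T$. Using nontriviality condition (2), the set $D_\alpha := \{ t \in T : \mathrm{ht}_T(t) \ge \alpha \}$ is dense in $\p_T$ for every $\alpha < \omega_1$, so $G$ meets each $D_\alpha$. Since by assumption $\omega_1^V = \omega_1^{V[G]}$, the chain $b_G := \bigcup G$ is a branch of $T$ of length $\omega_1$ in $V[G]$.

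Next, I would show $b_G \notin V$. Given any branch $B \in V$ of $T$ of length $\omega_1$, consider the set $D_B := \{ t \in T : t \notin B \}$. For any $s \in T$, nontriviality condition (1) provides incomparable $u, v \in T$ with $s \le_T u$ and $s \le_T v$; since $B$ is linearly ordered, at most one of $u, v$ lies on $B$, so the other lies in $D_B$ and extends $s$ in $\p_T$. Hence $D_B$ is dense in $\p_T$, so by genericity $G \cap D_B \ne \emptyset$, which gives $b_G \ne B$. As $B$ was arbitrary, $b_G$ differs from every branch of $T$ of length $\omega_1$ in $V$, so $b_G \notin V$.

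Now $\textsf{IGMP}(\omega_2)$ is in force and $\p_T$ is a forcing poset which preserves $\omega_1$, so Theorem 3.6 applies to $T$ and $\p_T$, yielding that every branch of $T$ of length $\omega_1$ in $V[G]$ must already lie in $V$. This contradicts the existence of $b_G$. Hence $\p_T$ collapses $\omega_1$. There is no real obstacle here, since the argument is a direct application of Theorem 3.6; the only items to verify are the two standard density facts establishing that $\p_T$ generically adds a new branch of length $\omega_1$, both of which follow immediately from the two clauses of nontriviality.
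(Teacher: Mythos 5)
Your proof is correct and follows essentially the same route as the paper: the paper also observes that $\p_T$ generically adds a branch of $T$ not in $V$ (of length equal to the height of $T$, by the nontriviality clauses) and then invokes Theorem 3.6 to conclude that $\omega_1$ must be collapsed. You have merely written out the two density arguments that the paper leaves as a remark preceding the theorem, and phrased the deduction contrapositively.
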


\begin{proof}
Let $G$ be a $V$-generic filter on $\p_T$. 
Then in $V[G]$, $G$ is a branch of $T$ which is not in $V$. 
By Theorem 3.6, $\omega_1^V$ is not equal to $\omega_1^{V[G]}$.
\end{proof}

We note that the conclusion of Theorem 3.7 was previously known to follow from 
$\textsf{PFA}$ (\cite[Section 7]{baumgartner}). 
Namely, under $\textsf{PFA}$, every tree with height and size $\omega_1$ 
is special (see Definition 4.1 below). 
And adding a new branch of length $\omega_1$ 
to a special tree by forcing will collapse 
$\omega_1$ (see Proposition 4.3 below).

Recall that if $T$ is an $\omega_1$-Suslin tree, then $T$ is a tree 
with height and size $\omega_1$, and $\p_T$ is $\omega_1$-c.c. 
In particular, $\p_T$ preserves $\omega_1$. 
Moreover, if there exists an $\omega_1$-Suslin tree, then there exists 
a nontrivial $\omega_1$-Suslin tree.

\begin{corollary}
$\textsf{IGMP}(\omega_2)$ implies that there does not 
exist an $\omega_1$-Suslin tree, so the Suslin hypothesis holds.
\end{corollary}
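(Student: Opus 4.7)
The plan is to apply Theorem 3.7 and derive an immediate contradiction. Suppose for a contradiction that an $\omega_1$-Suslin tree exists. Then, as recalled in the paragraph preceding the corollary, there exists a \emph{nontrivial} $\omega_1$-Suslin tree $T$, and such a $T$ automatically has height and size $\omega_1$.

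Next I would observe that the forcing poset $\p_T$ is $\omega_1$-c.c. This is immediate from the definition of a Suslin tree: any antichain in $\p_T$ is a set of pairwise $<_T$-incomparable nodes of $T$, so uncountable antichains in $\p_T$ would correspond to uncountable antichains in $T$, which do not exist. By the $\omega_1$-c.c., $\p_T$ preserves $\omega_1$.

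Now I invoke Theorem 3.7: under $\textsf{IGMP}(\omega_2)$, since $T$ is a nontrivial tree with height and size $\omega_1$, the forcing poset $\p_T$ collapses $\omega_1$. This directly contradicts the previous paragraph. Hence no $\omega_1$-Suslin tree exists.

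For the final clause, the Suslin hypothesis is the statement that every ccc dense linear order without endpoints is separable, and it is a classical equivalence that this holds if and only if there is no $\omega_1$-Suslin tree; so the conclusion follows. I do not expect any genuine obstacle here, since Theorem 3.7 has already done the substantive work; the entire argument is an unwinding of the definitions together with the standard observation that $\p_T$ is ccc for Suslin $T$.
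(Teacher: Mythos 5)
Your proposal is correct and follows exactly the paper's route: the paper's proof is "Immediate from Theorem 3.7," relying on the preceding remarks that a Suslin tree yields a nontrivial one of height and size $\omega_1$ whose poset $\p_T$ is $\omega_1$-c.c.\ (hence preserves $\omega_1$), contradicting the collapse given by Theorem 3.7. You have merely written out the details that the paper leaves implicit.
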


\begin{proof}
Immediate from Theorem 3.7.
\end{proof}

On the other hand, the Suslin hypothesis is consistent with 
the principle $\textsf{GMP}$. 
For example, the model of $\textsf{GMP}$ constructed in \cite{jk26} 
is a generic extension by a forcing poset of the form 
$\p * \textrm{Add}(\omega,\lambda)$. 
But Shelah \cite{shelah} proved that Cohen forcing 
$\textrm{Add}(\omega)$ adds an 
$\omega_1$-Suslin tree, so there exists an $\omega_1$-Suslin tree 
in this model.

\begin{thm}
Assume that $2^\omega \le \omega_2$, and there exist cofinally many 
sets in $P_{\omega_2}(H(\omega_2))$ which are 
indestructibly $\omega_1$-guessing. 
Suppose that $W$ is a generic extension of $V$, and $W$ contains a subset 
of $\omega_1$ which is not in $V$. 
Then either $W$ contains a real which is not in $V$, 
or $\omega_2^V$ is not a cardinal in $W$.
\end{thm}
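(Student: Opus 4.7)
The plan is to argue by contradiction. Assume $W=V[G]$ adds a subset $A\subseteq\omega_1$ with $A\notin V$, adds no new real, and $\omega_2^V$ is still a cardinal in $W$. No new reals gives $\omega_1^W=\omega_1^V$ (a collapse of $\omega_1^V$ would produce a new bijection $\omega\to\omega_1^V$, hence a new real). A standard coding argument using no new reals shows that every countable subset of $\omega_1$ in $W$, and more generally every countable subset of any ordinal of $V$-size $\le\omega_1$, lies in $V$. Also, by Theorem 2.8 (whose proof needs only one $\omega_1$-guessing $N$ containing the given tree, hence goes through under our cofinality hypothesis), no weak $\omega_1$-Kurepa tree exists; so CH fails, and combined with $2^\omega\le\omega_2$ we get $2^\omega=\omega_2$.

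The first substantive step is to show $\omega_2^V$ is \emph{regular} in $W$. If $\cf(\omega_2^V)^W=\omega$, fix a cofinal $\vec\gamma:\omega\to\omega_2^V$ in $W$; its image is a countable subset of $\omega_2^V$, which via a $V$-bijection $\omega_2^V\to 2^\omega$ becomes a countable subset of $2^\omega$. Such a set is in $V$ (enumerate it as $\{r_n : n<\omega\}$ and encode via $\{(n,k) : r_n(k)=1\}\subseteq\omega\times\omega$, a real, hence in $V$ by no new reals), so the image lies in $V$, contradicting regularity of $\omega_2^V$ in $V$. If $\cf(\omega_2^V)^W=\omega_1$, fix a cofinal $\vec\gamma:\omega_1\to\omega_2^V$ in $W$ together with a $V$-definable family of surjections $\sigma_\alpha:\omega_1\twoheadrightarrow\alpha$ for $\alpha<\omega_2^V$; then $(i,\beta)\mapsto\sigma_{\gamma_i}(\beta)$ is a surjection $\omega_1\times\omega_1\twoheadrightarrow\omega_2^V$ in $W$, collapsing $\omega_2^V$ and contradicting our assumption. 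Since $\omega_1$ and $\omega_2^V$ are consecutive cardinals of $V$ and no new $W$-cardinals can arise in between, $\cf(\omega_2^V)^W=\omega_2^V$.

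For the main construction, fix in $V$ an enumeration $\langle a_\xi : \xi<\omega_2\rangle$ of $[\omega_1]^{\le\omega}$ (possible since $|[\omega_1]^{\le\omega}|=\omega_1^\omega=\omega_2$). For each $\alpha<\omega_1$ the set $A\cap\alpha$ is countable and hence lies in $V$, so $A\cap\alpha=a_{\xi_\alpha}$ for a unique $\xi_\alpha<\omega_2$. The function $\xi:\omega_1\to\omega_2^V$ is in $W$, and by the regularity just established its range is bounded; fix $\beta<\omega_2^V$ with $\xi_\alpha<\beta$ for every $\alpha<\omega_1$. Define in $V$ the tree
\[
T=\{(\alpha,\,a_\zeta\cap\alpha) : \alpha<\omega_1,\ \zeta<\beta\},
\]
ordered by $(\alpha,b)<_T(\alpha',b')$ iff $\alpha<\alpha'$ and $b=b'\cap\alpha$. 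Then $|T|^V\le\omega_1$, $T$ has height $\omega_1$, and $T\in H(\omega_2)^V$. The sequence $\langle(\alpha,A\cap\alpha) : \alpha<\omega_1\rangle$ is a branch of $T$ in $W$ of length $\omega_1$, and it is not in $V$ (otherwise $A=\bigcup_\alpha A\cap\alpha$ would be).

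To conclude, using the cofinality hypothesis pick $N\prec H(\omega_2)$ in $P_{\omega_2}(H(\omega_2))^V$ with $T\in N$ and $N$ indestructibly $\omega_1$-guessing. Since $\omega_1$ is preserved, $N$ is $\omega_1$-guessing in $W$. After the usual WLOG identification of the underlying set of $T$ with $\omega_1$ via a bijection in $N$, Lemma 2.4 implies that every length-$\omega_1$ branch of $T$ in $W$ lies in $N\subseteq V$; hence the $A$-branch is in $V$, forcing $A\in V$, a contradiction. The main obstacle I anticipated was that, in the forced case $2^\omega=\omega_2$, the naive tree $2^{<\omega_1}$ has size $\omega_2$ and does not lie in $H(\omega_2)$, blocking a direct appeal to Lemma 2.4; this is resolved by exploiting the preserved cardinality (and hence regularity) of $\omega_2^V$ in $W$ to bound the enumeration index of the initial segments of $A$ strictly below $\omega_2^V$, cutting the relevant tree down to size $\omega_1$.
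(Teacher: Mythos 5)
Your overall route is genuinely different from the paper's: you assume toward a contradiction that $\omega_2^V$ survives as a cardinal, use its resulting regularity in $W$ to bound the enumeration indices of the initial segments of $A$ below some $\beta<\omega_2$, and then package those initial segments into a tree of size $\omega_1$ in $H(\omega_2)^V$ to which Lemma 2.4 applies. The paper instead builds a $\subseteq$-increasing $\omega_2$-chain $\langle N_i : i<\omega_2\rangle$ of indestructibly guessing sets whose union covers $H(\omega_1)$, reads off the map $\alpha\mapsto i_\alpha$ (the least index with $A\cap\alpha\in N_{i_\alpha}$) as a cofinal $\omega_1$-sequence in $\omega_2^V$, and in the bounded case applies the guessing property directly: the guessed set $e\cap N_\delta$ lies in $V$ simply because $e$ and $N_\delta$ do. Note that the paper's argument nowhere uses elementarity of the guessing sets.

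That last point is where your proof has a genuine gap. The hypothesis is only that there are \emph{cofinally} many indestructibly $\omega_1$-guessing sets in $P_{\omega_2}(H(\omega_2))$; a cofinal family need not meet the club of elementary substructures, so you cannot ``pick $N\prec H(\omega_2)$ with $T\in N$ and $N$ indestructibly $\omega_1$-guessing.'' Lemma 2.4 (and likewise Theorem 2.8, which you invoke to get $2^\omega=\omega_2$) essentially uses $N\prec H(\omega_2)$, both to conclude $T\subseteq N$ and to argue that the guessed set $e$ equals the branch. Cofinality only guarantees $T\in N$, from which nothing like $T\subseteq N$ follows for a non-elementary $N$, so the branch need not even be a subset of $N\cap On$ and the approximation/guessing machinery does not engage. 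Your argument is correct under the stronger hypothesis ``cofinally many indestructibly guessing $N\prec H(\omega_2)$'' (which holds in the intended applications, e.g.\ under $\textsf{IGMP}$), but it does not prove the theorem as stated. A secondary remark: the detour through Theorem 2.8 and the cofinality case analysis in your second paragraph are unnecessary --- once $\omega_1$ is preserved and $\omega_2^V$ is assumed to remain a cardinal, $\omega_2^V=\omega_2^W$ is a successor cardinal of $W$ and hence regular there, which is all you use; this also sidesteps the fact that your $\cf=\omega$ case silently relies on $2^\omega=\omega_2$, itself obtained via the problematic elementarity step. (For the enumeration of $[\omega_1]^{\le\omega}$ in order type at most $\omega_2$ you need only $\omega_1^\omega=\omega_1\cdot 2^\omega\le\omega_2$, which follows from the stated hypothesis $2^\omega\le\omega_2$ by Hausdorff's formula, so no appeal to Theorem 2.8 is needed there either.)
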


\begin{proof}
Since $2^\omega \le \omega_2$, $H(\omega_1)$ has size at most $\omega_2$. 
So we can inductively define a $\subseteq$-increasing sequence 
$\langle N_i : i < \omega_2 \rangle$, 
whose union contains $H(\omega_1)$, 
such that for each $i < \omega_2$, 
$N_i$ is in $P_{\omega_2}(H(\omega_2))$, 
$\omega_1 + 1 \subseteq N_i$, and $N_i$ is indestructibly $\omega_1$-guessing. 

Suppose that $W \setminus V$ does not contain a real, and 
we will show that $\omega_2^V$ is not a cardinal in $W$. 
Since every subset of $\omega$ in $W$ is in $V$, it follows that 
$\omega_1^V = \omega_1^W$, and 
$W \setminus V$ contains no bounded subset of $\omega_1$. 
Fix $b$ which is a subset of $\omega_1$ in $W \setminus V$. 
Then every proper initial segment of $b$ is in $V$, and hence in $H(\omega_1)^V$. 
So we can fix, for each $\alpha < \omega_1$, the least ordinal 
$i_\alpha < \omega_2^V$ such that $b \cap \alpha \in N_{i_\alpha}$. 
Note that the sequence $\langle i_\alpha : \alpha < \omega_1 \rangle$ is in $W$.

We claim that this sequence is unbounded in $\omega_2^V$, which implies 
that $\omega_2^V$ is not a cardinal in $W$. 
Otherwise there is $\delta < \omega_2^V$ such that 
$i_\alpha < \delta$ for all $\alpha < \omega_1$. 
Since the sequence $\langle N_\alpha : \alpha < \omega_2 \rangle$ is 
$\subseteq$-increasing, it follows that 
for all $\alpha < \omega_1$, $b \cap \alpha \in N_\delta$. 
As $\omega_1 + 1 \subseteq N_\delta$, 
$b$ is a bounded subset of $N_\delta \cap On$. 

For any countable set $a \in N_\delta$, 
$a \cap b = a \cap (b \cap \alpha)$ for some $\alpha < \omega_1$, 
and hence $a \cap b \in N_\delta$. 
So $b$ is countably approximated by $N_\delta$ in $W$. 
Since $N_\delta$ is indestructibly $\omega_1$-guessing in $V$, $N_\delta$ 
is $\omega_1$-guessing in $W$. 
So there is $e \in N_\delta$ such that $b = e \cap N_\delta$. 
But then $e$ and $N_\delta$ are in $V$, and hence $b \in V$, 
which is a contradiction.
\end{proof}

We note that the conclusion of Theorem 3.9 
was previously shown to follow from $\textsf{PFA}$ 
by {Todor\v cevi\' c} \cite[Theorem 2]{todorcevic}.

\section{Trees and Guessing Models}

In this section we review some ideas of Baumgartner and Viale-Weiss 
concerning trees and guessing models. 
The main result of this section is Corollary 4.5, which gives 
a sufficient condition under which $\textsf{IGMP}$ holds. 
We also observe that $\textsf{PFA}$ implies $\textsf{IGMP}$.

The next definition is due to Baumgartner \cite[Section 7]{baumgartner}.

\begin{definition}
Let $T$ be a tree. 
We say that $T$ is \emph{special} if there exists a function 
$f : T \to \omega$ such that whenever $s$, $t$, and $u$ are in $T$ and 
$f(s) = f(t) = f(u)$, if $s <_T t$ and $s <_T u$, 
then $t$ and $u$ are comparable in $T$.
\end{definition}

Recall that for a tree $T$, 
a function $g : T \to \omega$ is a \emph{specializing function} if 
for all $s, t \in T$, if $s <_T t$ then $g(s) \ne g(t)$. 
Clearly if $T$ has a specializing function, then $T$ has no branches 
of length $\omega_1$.
Baumgartner \cite[Theorem 7.3]{baumgartner} proved that 
for a tree $T$ of height $\omega_1$ which has no branches of length 
$\omega_1$, $T$ is special in the sense of Definition 4.1 iff 
$T$ has a specializing function.

The next theorem appears as Theorem 7.5 of \cite{baumgartner}.

\begin{thm}
Assume that every tree of height and size $\omega_1$ which has 
no branches of length $\omega_1$ is special. 
Then every tree of height and size $\omega_1$ which has at 
most $\omega_1$ many branches of length $\omega_1$ is special.
\end{thm}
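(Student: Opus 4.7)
The plan is to reduce the problem to the hypothesis by excising $\omega_1$-many nodes of $T$ (one distinguished per long branch) so that the remainder $T^\circ$ has no branches of length $\omega_1$, apply the hypothesis (together with Theorem 4.2) to obtain a specializing function on $T^\circ$, and then extend it to a Definition 4.1 witness on all of $T$ using a disjoint block of fresh colors.

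For the excision, enumerate the branches of $T$ of length $\omega_1$ as $\{b_\alpha : \alpha<\omega_1\}$ (with repetition if fewer than $\omega_1$), and for each $\alpha$ pick $s_\alpha \in b_\alpha \setminus \bigcup_{\beta<\alpha} b_\beta$; such $s_\alpha$ exist cofinally in $b_\alpha$, since each pair $b_\alpha,b_\beta$ of distinct long branches diverges at a countable level and $\{\beta : \beta < \alpha\}$ is countable. Setting $T^\circ := T \setminus \bigcup_{\alpha} \{t \in T : s_\alpha \leq_T t\}$, any chain of length $\omega_1$ in $T^\circ$ would extend to some $b_\alpha$ in $T$, contradicting that all $b_\alpha$-nodes at level $\geq \mathrm{lev}(s_\alpha)$ are excised; so $T^\circ$ has no branches of length $\omega_1$. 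After possibly padding $T^\circ$ with an $\omega_1$-Aronszajn tree (which exists in $\mathsf{ZFC}$) to ensure height and size $\omega_1$, the hypothesis applies, and by Theorem 4.2 we obtain $g^\circ : T^\circ \to \omega$ witnessing Definition 4.1 on $T^\circ$.

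The main obstacle is extending $g^\circ$ to $f : T \to \omega$ satisfying Definition 4.1 on all of $T$. The excised part $T \setminus T^\circ = \bigcup_\alpha U_\alpha$, with $U_\alpha := \{t \in T : s_\alpha \leq_T t\}$, is a union of upper cones that may be nested (if $s_\alpha \leq_T s_\beta$ then $U_\beta \subseteq U_\alpha$) and that can branch internally, so a single new color for the excised part does not suffice. My approach is to reserve a block of $\omega$ colors disjoint from the range of $g^\circ$, and color each $t \in T \setminus T^\circ$ by a function of the minimal index $\alpha(t) := \min\{\alpha : s_\alpha \leq_T t\}$, using a partition of $\omega_1$ into countably many unbounded pieces $\{A_n : n<\omega\}$ to fit into $\omega$ colors.

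Verifying Definition 4.1 globally is the technical heart of the argument. For a putative bad triple $(s, t, u)$ with $s <_T t, u$, incomparable $t, u$, and $f(s)=f(t)=f(u)$: if all three lie in $T^\circ$ we contradict $g^\circ$'s specializing property, and if all three lie in $T \setminus T^\circ$ the inequalities $\alpha(t), \alpha(u) \leq \alpha(s)$ (forced by $s_{\alpha(s)} \leq_T s <_T t,u$) together with the coloring convention constrain the configuration severely. The hardest point is that these constraints alone do not immediately preclude a bad triple: one must finesse the choice of the $s_\alpha$'s (for instance, arranging the levels $\mathrm{lev}(s_\alpha)$ to be strictly increasing) and of the partition $\{A_n\}$, possibly via a recursive refinement that iterates the excision within $T \setminus T^\circ$, so that any remaining bad triple forces $t$ and $u$ onto a common $b_\alpha$ and hence comparable, the desired contradiction.
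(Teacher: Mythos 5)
There is a genuine gap, and it sits exactly where you locate the ``technical heart'': the coloring of the excised part cannot work as described, and the fix you gesture at (``recursive refinement'') is the entire content of the theorem. The fiber $\{t : \alpha(t)=\alpha\} = U_\alpha \setminus \bigcup_{\gamma<\alpha}U_\gamma$ is a full upward cone minus some subcones, and in general it is very far from being a chain: above $s_\alpha$ the tree $T$ may contain, say, an entire Aronszajn subtree meeting no other cone $U_\gamma$, all of whose nodes then receive one and the same color under any scheme that depends only on $\alpha(t)$. That single fiber already contains triples $s<_T t,u$ with $t,u$ incomparable, so Definition 4.1 fails no matter how you choose the $s_\alpha$'s, their levels, or the partition $\{A_n\}$; and iterating the excision inside the cones has no visible termination. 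The root of the problem is that you excised \emph{cones} rather than \emph{branches}: the excised set carries all of the branchless structure of $T$ that the hypothesis was supposed to handle.

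The repair is to decompose along branches. Let $B:=\bigcup_\alpha b_\alpha$ and for $t\in B$ let $\beta(t):=\min\{\alpha : t\in b_\alpha\}$. Then $T\setminus B$ has no chains of length $\omega_1$ (any such extends to some $b_\alpha\subseteq B$), so the hypothesis handles it with one block of colors. On $B$, the fibers $c_\alpha := \{t:\beta(t)=\alpha\} = b_\alpha\setminus\bigcup_{\gamma<\alpha}b_\gamma$ \emph{are} chains (cocountable final segments of $b_\alpha$, since each $b_\gamma\cap b_\alpha$ is a countable initial segment), and $\beta$ is nondecreasing along $<_T$. Define an auxiliary tree order on $B$ by $s<^* t$ iff $s<_T t$ and $\beta(s)<\beta(t)$. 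Since $\{\beta(t):t\in b_\alpha\}$ is countable for each $\alpha$, while any $<^*$-chain lies in a single $b_\alpha$ with $\beta$ injective on it, $(B,<^*)$ has no uncountable branches; by the hypothesis and Baumgartner's equivalence (the paper's citation before Theorem 4.2) it has a specializing function $g$. Taking $f:=g$ on $B$ (with colors disjoint from those used on $T\setminus B$): if $s<_T t,u$ with $f(s)=f(t)=f(u)$, then $s<^* t$ is impossible, so $\beta(s)=\beta(t)$, and likewise $\beta(s)=\beta(u)$; hence $t,u$ lie on the common chain $c_{\beta(s)}$ and are comparable. This is the step your cone-based excision cannot reproduce.
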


\begin{proposition}
Suppose that $T$ is a tree with height $\omega_1$ which is special. 
Then whenever $W$ is an outer model of $V$ with the same $\omega_1$, 
any branch of $T$ in $W$ of length $\omega_1$ is in $V$.
\end{proposition}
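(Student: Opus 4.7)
The plan is to exploit the specializing function $f \colon T \to \omega$ (witnessing that $T$ is special in the sense of Definition 4.1), together with a pigeonhole argument applied to $f \restriction b$, to extract a ground-model chain which both lies inside $b$ and is cofinal in it; taking its downward closure in $T$ then recovers $b$ inside $V$.

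First, I would fix the witness $f \in V$ and a branch $b \in W$ of length $\omega_1$. Because $\omega_1^V = \omega_1^W$, $b$ is uncountable in $W$, so by pigeonhole in $W$ there exists some $n < \omega$ for which $C := b \cap f^{-1}(\{n\})$ is uncountable. I would then pick any $s_0 \in C$ and define, inside $V$, the set
\[
E := \{t \in T : s_0 \le_T t \text{ and } f(t) = n\}.
\]

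The argument now reduces to three claims about $E$: (i) $E$ is a chain; (ii) $E$ is cofinal in $b$; and (iii) $E \subseteq b$. Claim (i) is immediate from specialness: any two $t_1, t_2 \in E$ lie $\ge_T s_0$ with $f(s_0) = f(t_1) = f(t_2) = n$, so they are comparable. For (ii), since $C$ is an uncountable subset of the chain $b$ of order type $\omega_1$, the levels of elements of $C$ are unbounded in $\omega_1$; any such element strictly above $s_0$ lies in $E$, so $E$ meets arbitrarily high levels of $b$. For (iii), given $t \in E$, I would pick some $t_1 \in C \cap E$ at a level strictly exceeding that of $t$; by (i), $t$ and $t_1$ are comparable in $T$, forcing $t \le_T t_1$, and since $t_1 \in b$, downward closure of the branch $b$ yields $t \in b$.

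With these in hand, the set $\tilde b := \{x \in T : \exists t \in E,\ x \le_T t\}$ belongs to $V$; it is contained in $b$ because $E \subseteq b$ and $b$ is downward closed, and it contains $b$ because $E$ is cofinal in $b$. Hence $b = \tilde b \in V$. The main obstacle I anticipate is step (iii) --- ensuring that the $V$-definable set $E$ does not overshoot $b$ by including elements of $T$ off the branch. The resolution is that (i) and (ii) cooperate: any candidate stray $t \in E$ is forced by the chain structure to sit below a high element of $C \subseteq b$, after which downward closure of $b$ finishes the job.
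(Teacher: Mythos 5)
Your proof is correct and follows essentially the same route as the paper's: fix the specializing witness $f$, pigeonhole to find a color $n$ occurring uncountably often on $b$, form the ground-model set of nodes above a fixed $s_0 \in b$ with color $n$, observe it is a chain by specialness, and recover $b$ as its downward closure. The only (immaterial) difference is that you verify $E \subseteq b$ explicitly, whereas the paper concludes $b = c$ from $b \subseteq c$ and maximality of the branch $b$.
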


\begin{proof}
The proof follows easily from ideas of Baumgartner 
\cite[Section 7]{baumgartner}. 
Fix a function $f : T \to \omega$ such that whenever $s, t, u$ are in $T$ and 
$f(s) = f(t) = f(u)$, if $s <_T t$ and $s <_T u$ then 
$t$ and $u$ are comparable. 
Let $W$ be an outer model with $\omega_1^V = \omega_1^W$, 
and suppose that $b$ is a branch of $T$ with 
length $\omega_1$ in $W$. 
We will show that $b \in V$. 

Since $f \restriction b$ is a function from a set of size $\omega_1$ 
into $\omega$, we can fix $n < \omega$ such that 
the set $\{ t \in b : f(t) = n \}$ has size $\omega_1$.
Fix $s$ in $b$ such that $f(s) = n$. 
Then the set 
$$
X := \{ t \in b : s <_T t, \ f(t) = n \}
$$
is uncountable. 
But $X$ is a subset of the set 
$$
Y := \{ t \in T : s <_T t, \ f(t) = n \},
$$
and hence $Y$ is uncountable. 
Note that $Y$ is in $V$. 

The set $Y$ is an uncountable chain. 
For if $t$ and $u$ are in $Y$, then 
$s <_T t$, $s <_T u$, and $f(s) = f(t) = f(u) = n$. 
Since $T$ is special, it follows that $t$ and $u$ are comparable. 
Let $c$ be the downwards closure of $Y$. 
Then $c$ is a branch of $T$ in $V$ with length $\omega_1$, 
and $c \in V$. 
There are cofinally many nodes above $s$ in $b$ which 
take value $n$ under $f$, and any such node is in $c$. 
So $b = c$, and therefore $b \in V$.
\end{proof}

We now establish a connection between special trees 
and indestructibly $\omega_1$-guessing models. 
This connection involves constructing a tree 
from a guessing model; a similar construction was done 
previously in \cite[Lemma 4.6]{vialeweiss}.

\begin{proposition}
Let $\theta \ge \omega_2$ be a cardinal. 
Suppose that $N$ is in $P_{\omega_2}(H(\theta))$, 
$N \prec H(\theta)$, and $N$ is internally unbounded 
and $\omega_1$-guessing. 
Then there exists a tree $T$ of height and size $\omega_1$ 
which has $\omega_1$ many branches of length $\omega_1$ 
such that $T$ being special implies 
that $N$ is indestructibly $\omega_1$-guessing.
\end{proposition}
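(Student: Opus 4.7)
The strategy is to parameterize the tree by ordinal bounds $\delta \in N \cap On$, so that each slice $T^\delta$ captures potential bounded countably-approximated subsets of $\delta$, and the $\omega_1$-guessing property of $N$ in $V$ controls the cofinal branches per slice. Using internal unboundedness, fix a $\subseteq$-increasing chain $\langle N_\alpha : \alpha < \omega_1 \rangle$ of countable sets in $N$ with $\bigcup_{\alpha < \omega_1} N_\alpha = N$. For each $\delta \in N \cap On$, let $T^\delta$ consist of pairs $(\alpha,x)$ with $\alpha < \omega_1$, $x \in N$, and $x \subseteq N_\alpha \cap \delta$, ordered by $(\alpha,x) <_{T^\delta} (\beta,y)$ iff $\alpha < \beta$ and $x = y \cap N_\alpha$; let $T := \bigsqcup_{\delta \in N \cap On} T^\delta$ with nodes tagged by $\delta$.

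To verify the tree's properties: each level of each $T^\delta$ has cardinality at most $|N| = \omega_1$, so $|T| \le \omega_1$; the nodes $(\alpha,\emptyset)$ witness height $\omega_1$. For the count of length-$\omega_1$ branches in $V$, a cofinal branch of $T^\delta$ determines $d := \bigcup_\alpha x_\alpha \subseteq \delta$ with $d \cap N_\alpha = x_\alpha \in N$ for every $\alpha$. Since internal unboundedness places any countable $a \in N$ inside some $N_\beta$ (countable elements of $N$ are subsets of $N = \bigcup_\alpha N_\alpha$), one has $d \cap a = (d \cap N_\beta) \cap a \in N$, so $d$ is countably approximated by $N$. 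Being bounded by $\delta$, the $\omega_1$-guessing property of $N$ in $V$ furnishes $e \in N$ with $d = e \cap N$. Hence each $T^\delta$ has at most $|N| = \omega_1$ cofinal branches in $V$, and $T$ has exactly $\omega_1$ branches of length $\omega_1$.

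Now assume $T$ is special. Let $\p$ preserve $\omega_1$, $G$ be $V$-generic on $\p$, and $d \in V[G]$ be a bounded subset of $N \cap On$ that is countably approximated by $N$. Since $N \cap On$ is cofinal in $\sup(N \cap On)$, fix $\delta \in N \cap On$ with $\delta > \sup(d)$. Each $N_\alpha \in N$ is countable, so $d \cap N_\alpha \in N$, and in $V[G]$ the sequence $\langle (\alpha, d \cap N_\alpha) : \alpha < \omega_1 \rangle$ is a branch of $T^\delta$ of length $\omega_1$. By Proposition 4.3, since $T$ is special and $\p$ preserves $\omega_1$, this branch lies in $V$; hence $d = \bigcup_\alpha(d \cap N_\alpha) \in V$, and the $\omega_1$-guessing property of $N$ in $V$ applied to $d$ produces $e \in N$ with $d = e \cap N$, showing that $N$ remains $\omega_1$-guessing in $V[G]$. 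The main obstacle is arranging $T$ to simultaneously detect every such $d$ across every $\omega_1$-preserving extension while retaining only $\omega_1$ many length-$\omega_1$ branches in $V$; the $\delta$-parameterization resolves the tension, since $\omega_1$-guessing of $N$ in $V$ bijects the cofinal branches of each $T^\delta$ with a subset of $N$.
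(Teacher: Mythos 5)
Your proposal is correct and follows essentially the same route as the paper's proof: build, for each ordinal bound $\delta \in N$, a tree whose level-$\alpha$ nodes code the possible traces $d \cap N_\alpha \in N$ of a countably approximated set $d \subseteq \delta$, use $\omega_1$-guessing in $V$ to inject the cofinal branches of each slice into $N$, take the disjoint sum, and then invoke Proposition 4.3 to pull any branch arising from a $d$ in an $\omega_1$-preserving extension back into $V$. The only (immaterial) difference is that you code with subsets of $N_\alpha \cap \delta$ where the paper uses characteristic functions $f : N_\alpha \cap \delta \to 2$.
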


\begin{proof}
Since $N$ is internally unbounded, we can 
fix a $\subseteq$-increasing sequence $\langle N_i : i < \omega_1 \rangle$ 
with union equal to $N$ such that for all $i < \omega_1$, 
$N_i \in P_{\omega_1}(N) \cap N$. 

Fix an uncountable ordinal $\delta$ in $N$, 
and we will define a tree $T_\delta$. 
The desired tree $T$ will then be the direct sum over all such trees $T_\delta$. 
The underlying set of $T_\delta$ consists of all pairs in $N$ of the 
form $(i,f)$, where $i < \omega_1$ and 
$f : N_i \cap \delta \to 2$. 
For $(i,f)$ and $(j,g)$ in $T_\delta$, let $(i,f) <_{T_\delta} (j,g)$ if 
$i < j$ and $f \subseteq g$. 
Note that $T_\delta$ is a tree with height and size $\omega_1$.

We claim that $T_\delta$ has at most $\omega_1$ many branches of length $\omega_1$. 
Consider a branch $b$ of length $\omega_1$, and let 
$F_b := \bigcup \{ f : \exists i \ (i,f) \in b \}$. 
Note that $F_b$ is a function with domain equal to $N \cap \delta$, and 
$b = \{ (i,F_b \restriction N_i) : i < \omega_1 \}$. 
Since $b \subseteq T_\delta \subseteq N$, 
we have that for all $i < \omega_1$, 
$F_b \restriction N_i \in N$.

Let $A_b := \{ \alpha \in N \cap \delta : F_b(\alpha) = 1 \}$. 
We claim that $A_b$ is $N$-guessed. 
As $N$ is $\omega_1$-guessing, it is enough to show that $A_b$ is 
countably approximated by $N$. 
So let $a$ be a countable set in $N$. 
Then for some $i < \omega_1$, $a \subseteq N_i$. 
Therefore $a \cap A_b = a \cap A_b \cap N_i$. 
Now $A_b \cap N_i = \{ \alpha \in N_i \cap \delta : F_b(\alpha) = 1 \}$, which is 
definable in $N$ from the parameters $N_i$, $\delta$, and 
$F_b \restriction N_i$. 
It follows that $A_b \cap N_i$ is in $N$. 
Since $a$ is also in $N$, $a \cap A_b = a \cap A_b \cap N_i$ is in $N$. 
Since $N$ is $\omega_1$-guessing, we can fix $e_b$ in $N$ such that 
$A_b = e_b \cap N$.

Suppose that $b$ and $c$ are distinct branches of $T$ with length $\omega_1$. 
Then easily the functions $F_b$ and $F_c$ are different, and therefore 
the sets $A_b$ and $A_c$ are distinct. 
Since $A_b = e_b \cap N$ and $A_c = e_c \cap N$, it follows that 
$e_b$ and $e_c$ are distinct. 
Thus the map $b \mapsto e_b$ from the set of branches of $T_\delta$ 
with length $\omega_1$ into $N$ is injective. 
Since $N$ has size $\omega_1$, it follows that $T_\delta$ has no more than 
$\omega_1$ many branches of length $\omega_1$. 
This completes the analysis of $T_\delta$.

Let $T$ be the disjoint sum of the trees $T_\delta$, for $\delta$ an 
uncountable ordinal in $N$. 
In other words, the underlying set of $T$ consists of pairs of the form 
$(\delta,t)$, where $\delta$ is an uncountable ordinal in $N$ and 
$t \in T_\delta$. 
And the order on $T$ is given by letting 
$(\delta_1,t_1) <_T (\delta_2,t_t)$ iff 
$\delta_1 = \delta_2$ and $t_1 <_{T_\delta} t_2$. 
Then $T$ is a tree of height and size $\omega_1$ which has 
at most $\omega_1$ many branches of length $\omega_1$.

Suppose that $T$ is special, 
and we will show that $N$ is indestructibly $\omega_1$-guessing. 
Let $W$ be an outer model of $V$ with $\omega_1^V = \omega_1^W$. 
Assume that $d$ is a bounded subset of $N \cap On$ in $W$ which is 
countably approximated by $N$. 
We will show that $d$ is $N$-guessed. 
Fix an uncountable ordinal $\delta$ in $N$ 
such that $d \subseteq \delta$.

Let $h : N \cap \delta \to 2$ be the characteristic function of $d$, in other words, 
$h(\alpha) = 1$ if $\alpha \in d$, and $h(\alpha) = 0$ otherwise. 
We claim that for all $i < \omega_1$, $(i,h \restriction N_i)$ is in 
$T_\delta$. 
It suffices to show that $h \restriction N_i$ is in $N$. 
Since $N_i \in N$, $N_i$ is countable, 
and $d$ is countably approximated by $N$, 
it follows that $d \cap N_i \in N$. 
But $h \restriction N_i$ is the characteristic function of $d \cap N_i$, 
so $h \restriction N_i \in N$. 
Hence $(i,h \restriction N_i)$ is in $T_\delta$.

It follows that $b := \{ (i,h \restriction N_i) : i < \omega_1 \}$ is a 
branch of $T_\delta$, and hence of $T$, with length $\omega_1$. 
Since $T$ is special, any branch of $T$ in $W$ with length $\omega_1$ 
is in $V$ by Proposition 4.3. 
It follows that $b \in V$. 
Therefore $h \in V$, and hence $d \in V$. 
Since $d$ is countably approximated by $N$ in $W$, it is 
also countably approximated by $N$ in $V$. 
As $N$ is $\omega_1$-guessing in $V$, $d$ is $N$-guessed.
\end{proof}

\begin{corollary}
Suppose that every tree of height and size $\omega_1$ which has 
no branches of length $\omega_1$ is special. 
Assume that for all sufficiently large 
regular cardinals $\theta \ge \omega_2$, there are stationarily many 
sets $N$ in $P_{\omega_2}(H(\theta))$ such that $N$ is 
internally unbounded and $\omega_1$-guessing. 
Then $\textsf{IGMP}$ holds.
\end{corollary}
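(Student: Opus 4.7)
The plan is to fix an arbitrary cardinal $\theta \ge \omega_2$ and show that there exist stationarily many $N \in P_{\omega_2}(H(\theta))$ which are indestructibly $\omega_1$-guessing. First I would invoke the hypothesis to pick a regular cardinal $\theta^* \ge \theta$ large enough that stationarily many $N^* \in P_{\omega_2}(H(\theta^*))$ with $N^* \prec H(\theta^*)$ are both internally unbounded and $\omega_1$-guessing. Given an arbitrary function $F : [H(\theta)]^{<\omega} \to H(\theta)$, I would lift $F$ to some $F^* : [H(\theta^*)]^{<\omega} \to H(\theta^*)$ in the standard way and use the stationarity at $\theta^*$ to pick such an $N^*$ which contains $\theta$ as an element and is closed under $F^*$.

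For each such $N^*$, Proposition 4.4 produces a tree $T$ of height and size $\omega_1$ which has at most $\omega_1$ many branches of length $\omega_1$, with the additional property that if $T$ is special, then $N^*$ is indestructibly $\omega_1$-guessing. The hypothesis of the corollary says that every tree of height and size $\omega_1$ with no $\omega_1$-branch is special; Theorem 4.2 then upgrades this to the statement that every tree of height and size $\omega_1$ with at most $\omega_1$ many $\omega_1$-branches is special. Applying this to $T$, we conclude that $T$ is special, and consequently $N^*$ is indestructibly $\omega_1$-guessing.

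To descend from $\theta^*$ down to $\theta$, set $N := N^* \cap H(\theta)$. Since $\theta \in N^*$ and $N^* \prec H(\theta^*)$, $N$ is elementary in $H(\theta)$, belongs to $P_{\omega_2}(H(\theta))$, and is closed under $F$. By the remark following Definition 3.2, $N$ inherits indestructibility from $N^*$. Since $F$ was arbitrary, this yields stationarily many indestructibly $\omega_1$-guessing models in $P_{\omega_2}(H(\theta))$, establishing $\textsf{IGMP}(\theta)$; as $\theta$ was arbitrary, $\textsf{IGMP}$ holds.

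The substantive content has essentially been packaged into Proposition 4.4 and Theorem 4.2, so the main obstacle in this corollary is simply verifying that no new difficulty arises in the reflection step from $H(\theta^*)$ to $H(\theta)$ and that the lift of $F$ to $F^*$ genuinely forces $N^* \cap H(\theta)$ to be closed under $F$; both verifications are entirely routine and rely only on elementarity of $N^*$ in $H(\theta^*)$ together with $\theta \in N^*$.
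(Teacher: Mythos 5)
Your proposal is correct and follows essentially the same route as the paper: apply Theorem 4.2 to upgrade the specialness hypothesis, apply Proposition 4.4 to the internally unbounded $\omega_1$-guessing models provided by the hypothesis at large regular $\theta^*$, and then transfer indestructibility down to smaller $\theta$ via the remark following Definition 3.2. The only difference is that you spell out the reflection step (choosing $N^*$ with $\theta \in N^*$ closed under a lift of $F$) which the paper compresses into the single sentence ``it suffices to show $\textsf{IGMP}(\theta)$ for all sufficiently large regular $\theta$.''
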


\begin{proof}
By Theorem 4.2, every tree of height and size $\omega_1$ which has 
at most $\omega_1$ many branches of length $\omega_1$ is special. 
By the comments after Definition 3.2, it suffices to show that for all 
sufficiently large regular cardinals $\theta \ge \omega_2$, 
$\textsf{IGMP}(\theta)$ holds. 
By assumption, for all sufficiently large regular cardinals 
$\theta \ge \omega_2$, there 
are stationarily many $N \in P_{\omega_2}(H(\theta))$ such that 
$N \prec H(\theta)$, $N$ is internally unbounded, and $N$ is 
$\omega_1$-guessing. 
By Proposition 4.4, for any such $N$ there exists a tree $T$ with height 
and size $\omega_1$ which has at most $\omega_1$ many branches 
of length $\omega_1$ such that 
if $T$ is special then $N$ is indestructibly $\omega_1$-guessing. 
By our assumption about trees, $T$ is indeed special, so $N$ is indestructibly 
$\omega_1$-guessing.
\end{proof}

\begin{corollary}
$\textsf{PFA}$ implies $\textsf{IGMP}$.
\end{corollary}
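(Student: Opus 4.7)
The plan is to derive $\textsf{IGMP}$ from $\textsf{PFA}$ by applying Corollary 4.5. So I need to verify under $\textsf{PFA}$ both (a) that every tree of height and size $\omega_1$ with no branches of length $\omega_1$ is special, and (b) that for all sufficiently large regular cardinals $\theta \geq \omega_2$ there are stationarily many $N \in P_{\omega_2}(H(\theta))$ with $N \prec H(\theta)$ that are internally unbounded and $\omega_1$-guessing.

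For (a), given such a tree $T$, I would consider the Baumgartner forcing $P(T)$ introduced in Section~1. Since $T$ has no branch of length $\omega_1$, $P(T)$ is $\omega_1$-c.c., hence proper. For each $x \in T$, the set $D_x = \{p \in P(T) : x \in \dom(p)\}$ is dense: given any $p$, one can assign $x$ some integer not already used on the finite chain through $x$ already in $\dom(p)$. Applying $\textsf{PFA}$ to $P(T)$ with the $\omega_1$ dense sets $\{D_x : x \in T\}$ yields a filter whose union is a total specializing function $f \colon T \to \omega$. By Baumgartner's theorem cited after Definition~4.1, the existence of a specializing function for a tree of height $\omega_1$ with no uncountable branch is equivalent to being special in the sense of Definition~4.1, so $T$ is special.

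For (b), I would invoke the two facts that $\textsf{PFA}$ implies $\textsf{ISP}(\omega_2)$, hence $\textsf{GMP}$, by Viale--Weiss \cite{vialeweiss}, and that $\textsf{PFA}$ implies $\textsf{MA}_{\omega_1}$, hence $\mathfrak{p} > \omega_1$. The first delivers, for any regular $\theta \geq \omega_2$, stationarily many $N \in P_{\omega_2}(H(\theta))$ with $N \prec H(\theta)$ that are $\omega_1$-guessing. The second, via Viale's lemma (\cite[Lemma 4.2]{viale}, quoted at the end of Section~2), ensures that every such $N$ is automatically internally unbounded. Combining these gives hypothesis (b).

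Having verified both hypotheses, Corollary~4.5 immediately yields $\textsf{IGMP}$. The main subtle point is step (a): one needs to confirm $P(T)$ is proper and that the collection of dense sets $\{D_x : x \in T\}$ has cardinality at most $\omega_1$, both of which are immediate here since $|T| = \omega_1$ and $P(T)$ is $\omega_1$-c.c. Beyond that, the argument is essentially a packaging of known implications of $\textsf{PFA}$, so no genuinely new combinatorial obstacle arises.
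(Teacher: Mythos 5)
Your proof is correct and follows essentially the same route as the paper: both reduce to Corollary 4.5 and verify its two hypotheses from known consequences of $\textsf{PFA}$ (the paper cites \cite{baumgartner2} for the specialness of trees under $\textsf{MA}$ and \cite[Section 4]{vialeweiss} directly for stationarily many internally unbounded $\omega_1$-guessing models). Your only deviations are cosmetic: you unwind the $\textsf{MA}_{\omega_1}$ argument for specializing $P(T)$ explicitly, and you obtain internal unboundedness via Viale's lemma that $\mathfrak{p}>\omega_1$ forces every $\omega_1$-guessing model to be internally unbounded, rather than citing the combined statement; both are valid.
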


\begin{proof}
By \cite[Section 4]{vialeweiss}, $\textsf{PFA}$ implies that for all 
regular cardinals $\theta \ge \omega_2$, there are stationarily many $N$ in 
$P_{\omega_2}(H(\lambda))$ which are internally unbounded 
and $\omega_1$-guessing. 
By \cite{baumgartner2}, $\textsf{MA}$, and hence $\textsf{PFA}$, implies that 
every tree of height and size $\omega_1$ which has no branches of 
length $\omega_1$ is special. 
The result now follows from Corollary 4.5.
\end{proof}

\bigskip

Corollary 3.4 and Proposition 4.4 
suggest an alternative definition of indestructibly $\omega_1$-guessing. 
Let us call an internally unbounded $\omega_1$-guessing model a 
\emph{special $\omega_1$-guessing model} if some tree as described 
in the proof of Proposition 4.4 is special. 
The argument of Proposition 4.4 shows that in that case, 
$N$ is $\omega_1$-guessing in any outer model $W$ with the 
same $\omega_1$. 
This conclusion about $N$ is apparently stronger than being indestructibly 
$\omega_1$-guessing, since the latter property is restricted to outer models 
$W$ which are generic extensions of $V$. 

Thus we could formulate another principle which asserts that there exist 
stationarily many special $\omega_1$-guessing models, and this principle 
clearly implies $\textsf{IGMP}$. 
Note that by the proof of Corollary 4.6, \textsf{PFA} implies 
this principle. 
We do not know whether the two principles are equivalent, so we leave this 
as an open question. 
They are equivalent if $\textsf{IGMP}$ implies that every tree of height 
and size $\omega_1$ with at most $\omega_1$ many branches is special, 
but that is not known.

\section{Strong Genericity and the Strongly Proper Collapse}

We now turn to developing the forcing posets which will be 
used in the consistency result of Section 8. 
In this section we review the ideas of strong genericity and strong properness, 
prove a theorem about the preservation of strong properness 
by proper forcing, 
and discuss the strongly proper collapse. 
More details on these topics can be found in \cite{jk26}.

\begin{definition}
Let $\q$ be a forcing poset, $q \in \q$, and $N$ a set. 
Then $q$ is a \emph{strongly $(N,\q)$-generic condition} 
if for any set $D$ which is a dense subset of the forcing poset $N \cap \q$, 
$D$ is predense in $\q$ below $q$.
\end{definition}
 
If $\q$ is understood from context, we say that $q$ is a 
strongly $N$-generic condition. 

\begin{lemma}
Let $\q$ be a forcing poset, $q \in \q$, and $N$ a set. 
Then $q$ is strongly $N$-generic iff there exists 
a function $r \mapsto r \restriction N$, defined on 
conditions $r \le q$, satisfying that $r \restriction N \in N \cap \q$, 
and for all $v \le r \restriction N$ in $N \cap \q$, $r$ and $v$ are compatible.
\end{lemma}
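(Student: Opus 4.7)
The plan is to prove the two directions separately, with the substantive content in the forward implication.

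For the easy direction, assume a function $r \mapsto r \restriction N$ with the stated properties exists, and let $D$ be a dense subset of $N \cap \q$. Given any $r \le q$, the condition $r \restriction N$ lies in $N \cap \q$, so density of $D$ in $N \cap \q$ produces some $v \in D$ with $v \le r \restriction N$. By the hypothesized property, $v$ and $r$ are compatible in $\q$. Thus every $r \le q$ has an extension-compatible element of $D$, witnessing that $D$ is predense below $q$. Hence $q$ is strongly $(N,\q)$-generic.

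For the forward direction, assume $q$ is strongly $(N,\q)$-generic and fix $r \le q$. The key move is to construct, from $r$, a dense subset of $N \cap \q$ whose intersection with the set of conditions compatible with $r$ immediately yields a valid choice of $r \restriction N$. Define
\[
D_r := \{\, s \in N \cap \q : s \text{ is incompatible with } r \text{ in } \q \,\} \cup \{\, s \in N \cap \q : \text{every } v \le s \text{ in } N \cap \q \text{ is compatible with } r \,\}.
\]
I claim $D_r$ is dense in $N \cap \q$: given $t \in N \cap \q$, if every extension of $t$ in $N \cap \q$ is compatible with $r$, then $t$ itself is already in the second piece of $D_r$; otherwise, some $s \le t$ in $N \cap \q$ is incompatible with $r$, and this $s$ lies in the first piece. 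By strong $(N,\q)$-genericity of $q$, the set $D_r$ is predense below $q$, so in particular some $s \in D_r$ is compatible with $r$. Such an $s$ cannot lie in the first piece of $D_r$, so it lies in the second, meaning every extension of $s$ in $N \cap \q$ is compatible with $r$. Set $r \restriction N := s$; this is a legitimate choice for each $r \le q$ (using AC to choose one such $s$ for each $r$).

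The main potential obstacle is that the restriction of $r$ to $N$ is not canonical, and one must cook up a dense set whose compatible elements automatically have the "every $N$-extension stays compatible" property. The dichotomy packaged into $D_r$ handles this: its density is a tautological case split, while the fact that compatible-with-$r$ elements must land in the second piece is exactly what furnishes the required witness. No further subtleties arise, and the equivalence follows.
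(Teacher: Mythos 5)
Your proof is correct: the easy direction is handled properly, and for the forward direction the dense set $D_r$ (conditions either incompatible with $r$ or with all $N\cap\q$-extensions compatible with $r$) together with predensity below $q$ is exactly the standard argument for this equivalence, which is the one used in the reference \cite[Lemma 2.2]{jk26} that the paper cites in lieu of a proof. No gaps.
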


\begin{proof}
See \cite[Lemma 2.2]{jk26}.
\end{proof}

For a forcing poset $\q$, let $\lambda_\q$ denote the smallest 
cardinal $\lambda$ such that $\q \subseteq H(\lambda)$.

\begin{definition}
A forcing poset $\q$ is \emph{strongly proper on a stationary set} 
if there are 
stationarily many $N$ in $P_{\omega_1}(H(\lambda_\q))$ such that 
whenever $p \in N \cap \q$, 
there is $q \le p$ which is a strongly $N$-generic condition.
\end{definition}

Standard arguments show that being strongly proper on a stationary 
set is equivalent to the property above, where we replace $\lambda_\q$ 
with any cardinal $\theta \ge \lambda_\q$.
 
\begin{proposition}
If $\q$ is strongly proper on a stationary set, then $\q$ has the 
$\omega_1$-covering property and the $\omega_1$-approximation property.
\end{proposition}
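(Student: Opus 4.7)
The plan is to prove both properties via a common mechanism: given a $\q$-name $\dot x$ for the potentially problematic object (a countable set for covering, a countably approximated bounded subset of ordinals for approximation) and a condition $q \in \q$, I choose a countable $N \prec H(\theta)$ from the stationary set provided by strong properness, with $q, \dot x, \q \in N$ for $\theta$ sufficiently large, together with a strongly $(N, \q)$-generic extension $q^* \le q$. The key leverage is that every dense subset of $N \cap \q$ living in $V$ is predense below $q^*$; combined with elementarity, this traps relevant decisions about $\dot x$ inside $N$ in every generic extension containing $q^*$.

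For the $\omega_1$-covering property, let $\dot a$ be forced by $q$ to be a function $\omega \to \mathrm{Ord}$ and set up $N, q^*$ as above with $\dot a \in N$. For each $n < \omega$ the dense open set $D_n = \{r \in \q : r \text{ decides } \dot a(n)\}$ lies in $N$, so $D_n \cap N$ is dense in $N \cap \q$ by elementarity and predense below $q^*$ by strong $(N,\q)$-genericity. Any $V$-generic $G \ni q^*$ thus contains some $r \in N \cap D_n$ forcing $\dot a(n) = \check\alpha$, and elementarity of $N$ applied to the parameters $r, \dot a, n$ forces $\alpha \in N$. Hence $q^* \Vdash \mathrm{ran}(\dot a) \subseteq N \cap \mathrm{Ord}$, a countable $V$-set, and the density of such $q^*$ below arbitrary $q$ yields the covering property.

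For the $\omega_1$-approximation property, let $\dot d$ be forced by $q$ to be a subset of some $\alpha_0$ countably approximated by $V$, and pick $N, q^*$ as above with $\dot d, \alpha_0 \in N$. The same predensity argument applied to the dense sets $E_\alpha$ of conditions deciding $\alpha \in \dot d$, for each $\alpha \in N \cap \alpha_0$, shows that $\dot d^G \cap N$ is computable from $G \cap N$; invoking countable approximation on the countable $V$-set $N \cap \alpha_0$ then gives $q^* \Vdash \dot d \cap N \in V$. The main obstacle --- and the genuinely delicate step --- is bootstrapping this local conclusion to $q^* \Vdash \dot d \in V$, since $\dot d^G$ may a priori have elements in $\alpha_0 \setminus N$. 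The plan is to exploit the projection $r \mapsto r \restriction N \in N \cap \q$ from Lemma 5.2, whose defining property (every $v \le r \restriction N$ in $N \cap \q$ is compatible with $r$) together with countable approximation allows one to argue that $\dot d^G$ is already determined in $V$ from the $N$-projections of conditions in $G$: any purported element of $\dot d^G$ in $\alpha_0 \setminus N$ could, via the projection, be reflected back through $N \cap \q$ to produce an inconsistency with the uniquely determined value of $\dot d \cap N \in V$. The density of such $q^*$ then upgrades the conclusion to the $\omega_1$-approximation property for $\q$ globally, with the combinatorial bookkeeping carried out in \cite{jk26}.
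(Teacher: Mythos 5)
Your covering argument is fine: it is the standard proper-forcing argument (a strongly $(N,\q)$-generic condition is in particular $(N,\q)$-generic, and generic conditions force $\mathrm{ran}(\dot a)\subseteq N$), which is exactly what the paper invokes. The gap is in the approximation half, which is the only part of the proposition where strong genericity is really needed. You correctly isolate the hard step --- passing from $q^*\Vdash \dot d\cap N\in V$ (which, note, is immediate from countable approximation applied to the countable $V$-set $N\cap\alpha_0$; no predensity argument is needed there) to $q^*\Vdash \dot d\in V$ --- but you do not prove it. The ``plan'' you describe, reflecting purported elements of $\dot d$ in $\alpha_0\setminus N$ back through $N\cap\q$ via the projection, is not the mechanism that works, and ending with ``the combinatorial bookkeeping carried out in \cite{jk26}'' concedes the step rather than closing it.

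The actual argument is a contradiction argument in which elements of $\alpha_0\setminus N$ never appear. Suppose some $p$ forces that $\dot d$ is countably approximated by $V$ but $\dot d\notin V$; choose $N\ni p,\dot d,\alpha_0$ from the stationary set and $q^*\le p$ strongly $(N,\q)$-generic. Since $\dot d\cap N$ is forced to lie in $V$, some $r\le q^*$ forces $\dot d\cap N=\check e$ for a fixed $e\in V$. Pass to $r\restriction N\in N\cap\q$; by strong genericity every extension of $r\restriction N$ in $N\cap\q$ is compatible with $r$, so densely many such extensions are below $p$. Fix $w\le r\restriction N,\,p$ in $N\cap\q$. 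Since $w\le p$ forces $\dot d\notin V$, $w$ cannot decide $\alpha\in\dot d$ for every $\alpha<\alpha_0$, so by elementarity of $N$ there are $\alpha\in N\cap\alpha_0$ and $w_0,w_1\le w$ in $N\cap\q$ with $w_0\Vdash\alpha\in\dot d$ and $w_1\Vdash\alpha\notin\dot d$. Both $w_0$ and $w_1$ are compatible with $r$, and $\alpha\in N$, so compatible extensions force both $\alpha\in e$ and $\alpha\notin e$ --- a contradiction. This is the step your sketch is missing; without it the proof is incomplete. (For comparison, the paper itself proves neither half from scratch: it cites standard proper-forcing arguments for covering and the computations of \cite{jk26} for approximation, so you were right to attempt the details, but the attempt stops exactly where the content begins.)
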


\begin{proof}
A condition which is strongly $N$-generic is also $N$-generic in the 
sense of proper forcing. 
By standard proper forcing arguments, $\q$ 
has the $\omega_1$-covering property. 
For a proof that $\q$ has the $\omega_1$-approximation property, 
see the comments after \cite[Proposition 2.13]{jk26}.
\end{proof}

\begin{thm}
Suppose that $\q$ is strongly proper on a stationary set, and $\p$ is proper. 
Then $\p$ forces that $\q$ is strongly proper on a stationary set.
\end{thm}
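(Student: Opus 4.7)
The plan is to show that in $V[G]$, for any function $F$ on $H(\chi)^{V[G]}$ with $\chi$ suitably large, there is a countable $M \prec H(\chi)^{V[G]}$ closed under $F$ such that every $q \in M \cap \q$ has a strongly $(M,\q)$-generic extension in $V[G]$; by the remark after Definition 5.3 this suffices. The $M$ will arise as $N[G]$ for a well-chosen $N \prec H(\chi)^V$. Concretely, let $\dot F$ name $F$, pick $\chi$ with $\p,\q,\dot F \in H(\chi)$, and use the hypothesis on $\q$ to choose a countable $N \prec H(\chi)^V$ containing $\p,\q,\dot F$, any prescribed $p_0 \in \p$, and a fixed bijection $f : \q \to |\q|$, such that every $q \in N \cap \q$ admits a strongly $(N,\q)$-generic extension in $V$. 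By properness of $\p$, take $p^* \le p_0$ that is $(N,\p)$-generic, and assume $p^* \in G$. Set $M := N[G]$. Standard consequences of $(N,\p)$-genericity give $M \prec H(\chi)^{V[G]}$, countable and closed under $F$, and $M \cap \mathrm{On} = N \cap \mathrm{On}$; combined with $f \in N$ this upgrades to $M \cap \q = N \cap \q$, since for $x \in M \cap \q$ named by $\dot x \in N$, the name $f(\dot x) \in N$ evaluates to the ordinal $f(x) \in M \cap \mathrm{On} = N \cap \mathrm{On}$, and then $x = f^{-1}(f(x)) \in N$ by elementarity.

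\medskip

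Because $M \cap \q = N \cap \q$, strong $(M,\q)$-genericity coincides with strong $(N,\q)$-genericity, so the core step is to show that any strongly $(N,\q)$-generic $q' \le q$ from $V$ remains strongly $(N,\q)$-generic in $V[G]$. Fix $D \in V[G]$ dense in $N \cap \q$ with $\p$-name $\dot D$, and pick $p^{**} \in G$ below $p^*$ forcing this. Let $r \le q'$ in $\q$, noting $r \in V$ automatically since $\q \subseteq V$. For each $p''' \le p^{**}$ in $V$, the set
\[
D_{p'''} := \{\, d \in N \cap \q : (\exists\, p^{(4)} \le p''')\ p^{(4)} \Vdash_\p d \in \dot D \,\}
\]
is dense in $N \cap \q$ in $V$, by the forced density of $\dot D$. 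Strong $(N,\q)$-genericity of $q'$ in $V$ therefore produces some $d \in D_{p'''}$ compatible with $r$ in $\q$, together with a witness $p^{(4)} \le p'''$ forcing $d \in \dot D$. Hence the set
\[
E_r := \{\, p \le p^{**} : (\exists\, d \in N \cap \q)\ p \Vdash d \in \dot D \text{ and } d, r \text{ are compatible in } \q \,\}
\]
is dense below $p^{**}$ in $\p$ in $V$; intersecting with $G$ yields some $d \in \dot D^G = D$ compatible with $r$. Since $\dot F$ and $p_0$ were arbitrary, this produces the required stationary family of good $M$ in $V[G]$.

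\medskip

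The main obstacle is precisely this last transfer: $\p$ can add new dense subsets to the countable poset $N \cap \q$, and one must argue that the ground-model witness $q'$ still meets them. Two features make it work. First, $\q$ lies in $V$, so compatibility in $\q$ is absolute between $V$ and $V[G]$, and every $r \le q'$ that needs to be handled is automatically a $V$-condition. Second, any new $D \in V[G]$ is bracketed in $V$ by the dense ``potential-membership'' set $D_{p'''}$, against which $q'$ is generic by hypothesis; a standard name-chase then converts the $V$-compatibility of some such $d$ with $r$ back into an actual $d \in D$ compatible with $r$ in $V[G]$.
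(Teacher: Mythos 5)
Your proposal is correct, and its skeleton matches the paper's: choose a countable $N \prec H(\chi)$ witnessing simultaneously the properness of $\p$ and the strong properness of $\q$, take an $(N,\p)$-generic $p^* \le p_0$, set $M := N[G]$, observe $M \cap \q = N \cap \q$, and then argue that the ground-model strongly $(N,\q)$-generic extensions remain strongly generic in $V[G]$. Where you diverge is in that last, essential step. The paper invokes Lemma 5.2: a condition $t$ is strongly $(N,\q)$-generic iff there is a restriction map $z \mapsto g(z) \in N \cap \q$ on $\{z : z \le t\}$ such that every $w \le g(z)$ in $N \cap \q$ is compatible with $z$. Since $g$, $\q$, and $N \cap \q$ all live in $V$ and compatibility in $\q$ is upward absolute, this characterization transfers to $V[G]$ verbatim, so preservation is a one-line absoluteness observation that handles all new dense subsets of $N \cap \q$ at once. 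You instead attack the new dense sets directly: given a name $\dot D$ for a dense subset of $N \cap \q$ and $r \le q'$, you form the ground-model dense set $D_{p'''}$ of conditions forced into $\dot D$ by some extension of $p'''$, apply strong genericity in $V$ to get compatibility with $r$, and then run a density argument below $p^{**}$ in $\p$ to pull an actual member of $D$ compatible with $r$ out of $G$. Both arguments are sound; the paper's is shorter and isolates the absoluteness content in a reusable lemma, while yours is self-contained and makes explicit exactly why $\p$-generic new dense subsets of the countable poset $N \cap \q$ cause no harm. (Two small points you gloss over, both standard: $\chi$ must be large enough, e.g.\ $\chi > 2^{|\p|}$, so that $H(\chi)^{V[G]} = H(\chi)^V[G]$ and $M \prec H(\chi)^{V[G]}$ makes sense; and the reduction from ``there is such an $M$ below any $p_0$ for any name $\dot F$'' to the stationarity statement in $P_{\omega_1}(H(\lambda_\q))$ is the same projection remark the paper also leaves implicit.)
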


\begin{proof}
Fix $\theta$ such that $\p$ forces that 
$\theta$ is a cardinal and $\theta \ge \lambda_\q$. 
Fix a $\p$-name $\dot F$ for a function from 
$(H(\theta)^{V[\dot G_\p]})^{<\omega}$ to 
$H(\theta)^{V[\dot G_\p]}$, and let $p \in \p$. 
We will find $u \le p$, and a name $\dot M$ for a countable subset of 
$H(\theta)^{V[\dot G_\p]}$ which is closed under $\dot F$, 
such that $u$ forces that 
for all $s \in \dot M \cap \q$, 
there is $t \le s$ which is strongly $(\dot M,\q)$-generic.

Let $\chi$ be a regular cardinal larger than $2^{|\p|}$ such that 
$\p$, $\q$, $\theta$, and $\dot F$ are in $H(\chi)$. 
Since $\p$ is proper and $\q$ is strongly proper on a stationary set, 
we can fix $N$ in $P_{\omega_1}(H(\chi))$ satisfying:
\begin{enumerate}
\item $N \prec (H(\chi),\in,\p,p,\q,\theta,\dot F)$;
\item for all $p_0 \in N \cap \p$, there is $q_0 \le p_0$ which is 
$(N,\p)$-generic;
\item for all $s \in N \cap \q$, there is $t \le s$ which is strongly 
$(N,\q)$-generic.
\end{enumerate}
Since $p \in N \cap \p$, by (2) we can fix $q \le p$ which is 
$(N,\p)$-generic. 
We claim that $q$ forces that 
$$
\dot M := N[\dot G_\p] \cap H(\theta)^{V[\dot G_\p]}
$$
is as required.

Since $q$ is $(N,\p)$-generic, $q$ forces that 
$N[\dot G_\p] \cap V = N$. 
By (1), $\p$ forces that 
$$
N[\dot G_\p] \prec (H(\chi)^{V[\dot G_\p]},\in,\dot F),
$$
and therefore that $N[\dot G_\p]$ is closed under $\dot F$.
Hence $\p$ forces that $\dot M$ is closed under $\dot F$.

Let $r \le q$ and $\dot s$ be given such that $r$ forces in $\p$ that 
$\dot s \in \dot M \cap \q$. 
Then $r$ forces that $\dot s \in N[\dot G_\p] \cap \q \subseteq 
N[\dot G_\p] \cap V = N$. 
So we can fix $u \le r$ and $s \in N$ such that $u$ forces that 
$\dot s = \check s$.

By (3), let $t \le s$ be strongly $(N,\q)$-generic. 
Then there exists a function $g : \{ z \in \q : z \le t \} \to N \cap \q$ 
satisfying that 
for all $z \le t$ in $\q$, if $w \le g(z)$ is in $N \cap \q$, 
then $w$ and $z$ are compatible in $\q$. 
Note that by upwards absoluteness, $g$ is forced to 
satisfy the same property in $V[\dot G_\p]$. 
But $u$ forces that $N \cap \q = N[\dot G_\p] \cap \q = \dot M \cap \q$. 
Therefore $u$ forces that $g : \{ z \in \q : z \le t \} \to \dot M \cap \q$ 
and for all 
$z \le t$ in $\q$, whenever $w \le g(z)$ is in $\dot M \cap \q$, 
then $w$ and $z$ are compatible in $\q$. 
In other words, $u$ forces that $t$ 
is strongly $(\dot M,\q)$-generic.
\end{proof}

Assume that $\kappa$ is a strongly inaccessible cardinal. 
In the proof of the consistency result in Section 8, 
we will use the forcing poset $\p$ from \cite[Section 6]{jk26}, 
which is called a strongly proper collapse. 
The forcing poset $\p$ is strongly proper, $\kappa$-c.c., has size $\kappa$, 
and collapses $\kappa$ to become $\omega_2$. 
Roughly speaking, this forcing poset consists of finite adequate sets of 
countable elementary substructures, ordered by reverse inclusion. 
A more detailed description of this forcing poset 
is beyond the scope of this paper; see \cite[Section 6]{jk26} for more details.

We will need one more technical fact about the strongly proper collapse $\p$.

\begin{proposition}
Let $\lambda \ge \kappa$ be a cardinal. 
Then $\p \times \textrm{Add}(\omega,\lambda)$ is strongly proper. 
Moreover, if $\p_0$ is any regular suborder of 
$\p \times \textrm{Add}(\omega,\lambda)$, then $\p_0$ forces that 
$(\p \times \textrm{Add}(\omega,\lambda)) / \dot G_{\p_0}$ is 
strongly proper on a stationary set.
\end{proposition}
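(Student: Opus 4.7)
The plan is to establish the two claims in sequence, leveraging the strong properness of $\p$ together with the essentially trivial strong-genericity structure of Cohen forcing.

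For the first claim, the strategy hinges on two elementary observations. First, for any countable $N \prec H(\chi)$ with $\textrm{Add}(\omega, \lambda) \in N$, every Cohen condition $c$ is strongly $(N, \textrm{Add}(\omega,\lambda))$-generic, witnessed by the restriction $c' \mapsto c' \restriction (\dom(c') \cap N)$: the image is a finite subset of $N$, hence in $N$, and any $v \in N$ extending it has domain inside $N$, so $v$ and $c'$ are compatible as partial functions. Second, if $p$ is strongly $(N, \p)$-generic with restriction $r \mapsto r \restriction_\p N$, then the coordinate-wise restriction $(r, d) \mapsto (r \restriction_\p N, d \restriction N)$ witnesses that $(p, c)$ is strongly $(N, \p \times \textrm{Add}(\omega, \lambda))$-generic, directly via Lemma~5.2. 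Since strong properness of $\p$ provides stationarily many good $N$, and any such $N$ with $\textrm{Add}(\omega, \lambda) \in N$ works for the product, the first claim follows.

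For the moreover, let $\p_0$ be a regular suborder of $\q := \p \times \textrm{Add}(\omega, \lambda)$. In $V$, fix a countable $N \prec H(\chi')$ (for suitably large $\chi'$) containing $\p, \p_0, \textrm{Add}(\omega, \lambda)$, and any pre-chosen name for a function on $V[\dot G_{\p_0}]$, such that $N$ is good for $\q$ in the sense of the first claim; stationarily many such $N$ exist. Given $p_0^* \in N \cap \p_0$, find a strongly $(N, \q)$-generic $(p, c) \le p_0^*$ in $\q$, and by Lemma~1.2 choose $s \in \p_0$ with $s \le p_0^*$ forcing $(p, c) \in \q/\dot G_{\p_0}$. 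The aim is to show that $s$ forces $(p, c)$ to be strongly $(N[\dot G_{\p_0}], \q/\dot G_{\p_0})$-generic, with the same coordinate-wise restriction map. The key points are: $r \in \q/G_0$ implies $r \in V$ (since $\q \subseteq V$), so $r \restriction N \in N \subseteq N[G_0]$; because $r \le r \restriction N$ in $\q$ and $r$ is compatible with every element of $G_0$, so is $r \restriction N$; and $(N, \q)$-genericity of $(p, c)$ in the proper sense forces $N[G_0] \cap V = N$, so any $v \le r \restriction N$ in $N[G_0] \cap (\q/G_0)$ actually lies in $N$.

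The main obstacle is lifting the compatibility of $v$ and $r$ in $\q$ (given by strong $(N, \q)$-genericity) to compatibility in $\q/G_0$. The plan is to exploit the explicit form of the canonical common extension produced by the restriction maps: in the Cohen coordinate it is literally the union $v_C \cup r_C$, which inherits compatibility with every element of $G_0$ from the fact that both $v_C$ and $r_C$ do; in the $\p$ coordinate, the analogous construction for the strongly proper collapse from \cite[Section~6]{jk26} produces a common extension that respects the product structure and, combined with Lemma~1.3 (dense subsets of $\q$ restricting to dense subsets of the quotient) and Lemma~1.2 (the projection into $\p_0$ witnessing membership in the quotient), remains in $\q/G_0$. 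Once this compatibility lift is settled, stationarity of the good substructures $N[G_0] \cap H(\theta)^{V[G_0]}$ in $V[G_0]$ follows by the standard absorption argument: include a name for an arbitrary club into $N$ and invoke elementarity. The delicate technical point is the compatibility lift itself, which depends on the projective character of the restriction maps for both factors.
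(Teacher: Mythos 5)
Your argument for the first assertion is correct and is the standard one: every condition of $\textrm{Add}(\omega,\lambda)$ is strongly $(N,\textrm{Add}(\omega,\lambda))$-generic for any countable $N \prec H(\chi)$, and the coordinatewise restriction map witnesses, via Lemma 5.2, that the pair of a strongly $(N,\p)$-generic condition with any Cohen condition is strongly generic for the product. (The paper does not spell this out; its entire proof of Proposition 5.6 is a citation to Theorem 2.11 and Proposition 7.3 of \cite{jk26}.)

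The ``moreover'' clause, however, has a genuine gap, located exactly where you flag the ``delicate technical point.'' Having found $v \le r \restriction N$ with $v \in N \cap (\q / G_0)$, you must produce a common extension of $v$ and $r$ that is itself compatible in $\q$ with every element of $G_0$. Your proposed resolution is not a proof. First, $\p_0$ is an \emph{arbitrary} regular suborder of $\p \times \textrm{Add}(\omega,\lambda)$ and need not factor as a product of suborders of the two coordinates, so membership in $\q / G_0$ cannot be checked coordinate by coordinate; the phrase ``inherits compatibility with every element of $G_0$ from the fact that both $v_C$ and $r_C$ do'' presupposes a product structure on $\p_0$ that is not available. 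Second, even setting that aside, the inference ``$v$ and $r$ each lie in $\q/G_0$, hence some common extension of them does'' is precisely the non-trivial content and is false for general strongly proper $\q$ and regular $\p_0$: strong properness of a forcing does not, by itself, pass to quotients by regular suborders. That is why \cite{jk26} isolates a strengthening of strong genericity (universality of the strongly generic conditions) in its Theorem 2.11 and then verifies, in Proposition 7.3, that the adequate-set forcing $\p$ and its product with $\textrm{Add}(\omega,\lambda)$ admit such conditions; this verification uses the concrete combinatorics of $\p$, which the present paper explicitly declines to describe. Your appeal to ``the analogous construction for the strongly proper collapse'' together with Lemmas 1.2 and 1.3 is a pointer to the missing argument rather than the argument itself. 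A secondary, more routine gap: to know that $N[G_0] \cap V = N$ and hence that $v \in N$, you need some condition of $G_0$ to be $(N,\p_0)$-generic; strong $(N,\q)$-genericity of $(p,c)$ does not immediately yield this for the suborder $\p_0$, so it must be arranged separately.
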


\begin{proof}
This follows from Theorem 2.11 and Proposition 7.3 of \cite{jk26}.
\end{proof}

\section{Special iterations}

To obtain a model in which $\textsf{IGMP}$ holds, we will use a finite 
support iteration $\p$ of forcings which specialize trees of height and size 
$\omega_1$ which have no branches of length $\omega_1$. 
It was proven recently in \cite{ycc} that such an iteration has the 
$\omega_1$-approximation property.

For our purposes, we will need to know that a certain quotient of such an 
iteration has the $\omega_1$-approximation property. 
Specifically, we will have an elementary substructure $N$ 
of size $\omega_1$, and we will need to know that the regular suborder 
$N \cap \p$ forces that $\p / \dot G_{N \cap \p}$ has the 
$\omega_1$-approximation property. 
Unlike the situation in \cite{jk26}, we do not have a general result 
which implies that such a quotient has the $\omega_1$-approximation property. 
Instead, we will prove directly that $\p / \dot G_{N \cap \p}$ is forced 
by $N \cap \p$ to be forcing equivalent 
to a finite support iteration of specializing forcings. 

Let $\textrm{Fn}(\omega_1,\omega)$ denote the set of all 
finite functions whose domain is a subset of $\omega_1$ and whose 
range is a subset of $\omega$. 
Recall that if $T$ is a tree with no branches of length $\omega_1$, then 
$P(T)$ is the forcing poset described in Section 1 for adding a 
specializing function to $T$.

\begin{definition}
For an ordinal $\lambda$, let $S(\lambda)$ denote the set of all 
functions $p$, whose domain is a finite subset of $\lambda$, 
such that for all $\alpha \in \dom(p)$, 
$p(\alpha) \in \textrm{Fn}(\omega_1,\omega)$. 
Define a partial order on $S(\lambda)$ by letting $q \le p$ if 
$\dom(p) \subseteq \dom(q)$, and for all $\alpha \in \dom(p)$, 
$p(\alpha) \subseteq q(\alpha)$.
\end{definition}

Note that if $W$ is an outer model of $V$ with 
$\omega_1^V = \omega_1^W$, then 
$S(\lambda)^V = S(\lambda)^W$, and the order on $S(\lambda)$ 
is the same in $V$ and $W$.

Instead of working directly with forcing iterations, we will use 
a simpler dense suborder.

\begin{definition}
For an ordinal $\lambda$ and a set $A \subseteq \lambda$, 
a sequence $\langle \p_i : i \le \lambda \rangle$ is said to be 
a \emph{special $A$-iteration} if there a exist sequence 
$\langle \dot T_i : i \in A \rangle$ 
such that the following statements are satisfied:
\begin{enumerate}
\item for all $i \le \lambda$, $\p_i$ is a suborder of $S(i)$;
\item for all $i \in A$, $\dot T_i$ is a $\p_i$-name for a 
tree with underlying set $\omega_1$ which has no branches of length $\omega_1$;
\item $\p_0 = \{ \emptyset \}$;
\item for all $i \in A$, a set $p \in S(i+1)$ is in $\p_{i+1}$ iff 
$p \restriction i \in \p_i$ and if $i \in \dom(p)$ then 
$p \restriction i \Vdash_{\p_i} p(i) \in P(\dot T_i)$;
\item for all $i \in \lambda \setminus A$, $\p_{i+1} = \p_{i}$;
\item for all $\beta \le \lambda$ limit, a set $p \in S(\beta)$ is in $\p_\lambda$ iff 
for all $i < \beta$, $p \restriction i \in \p_i$.
\end{enumerate}
\end{definition}

Note that if $p \in \p_\lambda$, then $\dom(p) \subseteq A$. 
If $A = \lambda$, then we say that the sequence is a \emph{special iteration}. 
The partial ordering $\p_\lambda$ itself is said to be a 
special $A$-iteration.

The next two lemmas provide some basic facts about special $A$-iterations.

\begin{lemma}
Let $\langle \p_i : i \le \lambda \rangle$ be a special $A$-iteration. 
Let $i < j \le \lambda$. 
Then:
\begin{enumerate}
\item $\p_i \subseteq \p_j$;
\item if $p \in \p_j$, then $p \restriction i \in \p_i$;
\item if $p \in \p_j$ and $q \le p \restriction i$ in $\p_i$, then 
$q \cup p \restriction [i,j)$ is in $\p_j$ and is below $p$;
\item the function $p \mapsto p \restriction i$ is a projection mapping of 
$\p_j$ onto $\p_i$, and this map satisfies that $p \restriction i = p$ for 
all $p \in \p_i$, and $q \le q \restriction i$ for all $q \in \p_j$;
\item $\p_i$ is a regular suborder of $\p_j$.
\end{enumerate}
\end{lemma}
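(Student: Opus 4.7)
The plan is to prove items (1)--(3) simultaneously by induction on $j$, and then derive (4) and (5) from them. Since every condition in $\p_j$ has finite support, the limit case (clause (6) of Definition 6.2) is immediate: for any relevant $p$ there exists $\alpha < \beta$ with $\dom(p) \subseteq \alpha$, reducing every membership and ordering question at stage $\beta$ to the corresponding one at stage $\alpha$. At a successor step $k+1$, the case $k \notin A$ is trivial by clause (5) of Definition 6.2, since $\p_{k+1} = \p_k$; so all the work is in the case $k \in A$, handled via clause (4).

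For (1), if $p \in \p_k$ then $\dom(p) \subseteq A \cap k$ by induction, so $k \notin \dom(p)$ and the forcing clause on $p(k)$ in (4) is vacuous, placing $p$ in $\p_{k+1}$. For (2), given $p \in \p_{k+1}$, clause (4) directly yields $p \restriction k \in \p_k$. For (3), let $p \in \p_{k+1}$ and $q \le p \restriction i$ in $\p_i$, and set $r := q \cup p \restriction [i,k+1)$. The inductive hypothesis at stage $k$ gives $r \restriction k = q \cup p \restriction [i,k) \in \p_k$ with $r \restriction k \le p \restriction k$ in $\p_k$. If $k \notin \dom(p)$, clause (4) places $r$ in $\p_{k+1}$ with no further check. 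If $k \in \dom(p)$, then $p \restriction k \Vdash_{\p_k} p(k) \in P(\dot T_k)$; since $r \restriction k$ extends $p \restriction k$ in $\p_k$ and $r(k) = p(k)$, the same forcing assertion passes to $r \restriction k$, placing $r$ in $\p_{k+1}$. The inequality $r \le p$ is then immediate from the coordinatewise definition of the order on $S(k+1)$.

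Item (4) is then routine bookkeeping: $p \restriction i = p$ whenever $\dom(p) \subseteq i$, and $q \restriction i \subseteq q$ gives $q \le q \restriction i$; order preservation of $\pi(p) := p \restriction i$ is clear; and the projection clause is exactly the content of (3), since the $r$ produced there satisfies $r \le p$ and $\pi(r) = q$. Finally, (5) follows from (4) by Lemma 1.6. The only delicate point is verifying at each successor step $k \in A$ that the name-theoretic clause $p \restriction k \Vdash_{\p_k} p(k) \in P(\dot T_k)$ is preserved when $p \restriction k$ is strengthened; this reduces to the trivial fact that forcing assertions persist under extension of conditions, once the inductive interdependence of (1)--(3) is set up correctly.
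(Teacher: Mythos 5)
Your proof is correct and follows exactly the route the paper intends: simultaneous induction on $j$ for (1)--(3), with (4) extracted from (3) and (5) obtained from the projection-mapping criterion (which is Lemma 1.5 in the paper, not 1.6). The paper's own proof is just the one-line remark that this induction is routine, so your write-up is simply the fleshed-out version of the same argument.
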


\begin{proof}
(1), (2), and (3) can be easily proven by induction. 
(4) follows from (3), and (4) implies (5) by Lemma 1.5.
\end{proof}

\begin{lemma}
Let $\langle \p_i : i \le \lambda \rangle$ be a special $A$-iteration, with 
sequence of names $\langle \dot T_i : i \in A \rangle$. 
Let $p$ and $q$ be conditions in $\p_\lambda$ satisfying that for all 
$i \in \dom(p) \cap \dom(q)$, $p(i) \subseteq q(i)$. 
Define $p + q$ as the function 
with domain equal to $\dom(p) \cup \dom(q)$, such that 
for all $i \in \dom(p + q)$, if $i \in \dom(q)$ then $(p + q)(i) = q(i)$, and if 
$i \in \dom(p) \setminus \dom(q)$, then $(p + q)(i) = p(i)$. 
Then $p + q$ is in $\p_\lambda$, and $p + q \le p, q$.
\end{lemma}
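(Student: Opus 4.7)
The plan is to prove simultaneously by induction on $i \le \lambda$ the conjunction: $(p+q) \restriction i \in \p_i$, $(p+q) \restriction i \le p \restriction i$ in $S(i)$, and $(p+q) \restriction i \le q \restriction i$ in $S(i)$. Once this is established at $i = \lambda$, the lemma is immediate: membership gives $p+q \in \p_\lambda$, and the two extension relations (which are just the componentwise ones in $S(\lambda)$) give $p+q \le p$ and $p+q \le q$ in $\p_\lambda$.

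The extension statements at stage $i$ are essentially bookkeeping. Indeed, $\dom(p \restriction i) \subseteq \dom((p+q) \restriction i)$ trivially, and for $j \in \dom(p) \cap i$ we have $(p+q)(j) = q(j)$ if $j \in \dom(q)$ (and then $p(j) \subseteq q(j)$ by the standing hypothesis) or $(p+q)(j) = p(j)$ otherwise; the analogous check works for $q$. So the real content is membership in $\p_i$.

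For membership, the base case $i = 0$ is trivial, and the limit case is handled directly by clause (6) of Definition 6.2 together with the inductive hypothesis applied at each $j < i$. For the successor case $i = j+1$, first note that $\dom(p+q) \subseteq \dom(p) \cup \dom(q) \subseteq A$, so if $j \notin A$ then $(p+q) \restriction (j+1) = (p+q) \restriction j$ and clause (5) gives $\p_{j+1} = \p_j$, so we are done by induction. Suppose now $j \in A$. By induction, $(p+q) \restriction j \in \p_j$ and it extends both $p \restriction j$ and $q \restriction j$ in $S(j)$, hence also in $\p_j$ by Lemma 6.3(3). If $j \notin \dom(p+q)$ then the condition from clause (4) is vacuous. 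Otherwise either $j \in \dom(q)$, in which case $(p+q)(j) = q(j)$ and $q \restriction j \Vdash_{\p_j} q(j) \in P(\dot T_j)$; since $(p+q) \restriction j \le q \restriction j$ in $\p_j$, the extension also forces this, giving $(p+q) \restriction j \Vdash_{\p_j} (p+q)(j) \in P(\dot T_j)$. Or $j \in \dom(p) \setminus \dom(q)$, and the identical argument using $p$ in place of $q$ applies.

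There is no real obstacle here; the only subtlety is the need to carry the two extension statements alongside membership in the induction, because without them one could not transfer the forced membership $p(j) \in P(\dot T_j)$ or $q(j) \in P(\dot T_j)$ from $p \restriction j$ or $q \restriction j$ to $(p+q) \restriction j$ in the successor step of clause (4). The asymmetric hypothesis $p(i) \subseteq q(i)$ on the overlap is exactly what makes both extension statements hold simultaneously.
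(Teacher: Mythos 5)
Your proof is correct and follows essentially the same route as the paper's: an induction on $i \le \lambda$ showing $(p+q)\restriction i \in \p_i$, with the successor step handled by transferring the forced membership in $P(\dot T_j)$ from $p\restriction j$ or $q\restriction j$ to $(p+q)\restriction j$. The paper simply observes the componentwise extension relations in $S(i)$ up front rather than carrying them through the induction, but this is a cosmetic difference.
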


\begin{proof}
Let $r := p + q$. 
It is clear that $r$ is in $S(\lambda)$, and $r \le p, q$ in $S(\lambda)$. 
In fact, for all $i \le \lambda$, $r \restriction i \in S(i)$, and 
$r \restriction i \le p \restriction i, q \restriction i$ in $S(i)$. 
To show that $r \in \p_\lambda$, we will prove by induction that 
$r \restriction i \in \p_i$ for all $i \le \lambda$.

For $i = 0$, $r \restriction 0 = \emptyset$ is in $\p_0$ 
by Definition 6.2(3).  
If $\beta \le \lambda$ is a limit ordinal and for all $\gamma < \beta$, 
$r \restriction \gamma \in \p_\gamma$, then 
$r \in \p_\beta$ by Definition 6.2(6).

Suppose that $i = i_0 + 1$ and $r \restriction i_0 \in \p_{i_0}$. 
Then $(r \restriction i) \restriction i_0 = r \restriction i_0 \in \p_{i_0}$. 
If $i_0 \notin \dom(r)$, then $r \restriction i = r \restriction i_0 
\in \p_{i_0} \subseteq \p_i$, and we are done. 
Suppose that $i_0 \in \dom(r)$. 
Then $i_0 \in A$, and 
$r(i_0) = s(i_0)$, where $s$ is either $p$ or $q$. 
But $r \restriction i_0 \le s \restriction i_0$, and 
$s \restriction i_0 \Vdash_{\p_{i_0}} s(i_0) \in P(\dot T_{i_0})$. 
Hence $r \restriction i_0 \Vdash_{\p_{i_0}} r(i_0) = s(i_0) \in P(\dot T_{i_0})$. 
So $r \in \p_i$.
\end{proof}

The next lemma says that a special $A$-iteration is forcing equivalent to 
a finite support iteration of specializing forcings.

\begin{lemma}
Let $\langle \p_i : i \le \lambda \rangle$ be a special $A$-iteration, with 
sequence of names $\langle \dot T_i : i \in A \rangle$. 
Then there exists 
a finite support iteration 
$$
\langle \p_i^*, \dot \q_j^* : i \le \lambda, 
j < \lambda \rangle
$$
satisfying the following properties:
\begin{enumerate}

\item for all $i < \lambda$, 
the function which send $p \in \p_i$ 
to $p^* \in \p_i^*$, where 
$\dom(p^*) = \dom(p)$ and 
for all $\alpha \in \dom(p^*)$, $p^*(\alpha)$ is the canonical 
$\p_{\alpha}^*$-name for $p(\alpha)$, is an isomorphism from $\p_i$ 
into a dense suborder of the separative quotient of $\p_{i}^*$;

\item for all $i \in A$, 
$\p_{i}^*$ forces that $\dot \q_{i}^* = P(\dot T_i^*)$, where 
$\dot T_i^*$ is the canonical translation of the $\p_i$-name 
$\dot T_{i}$ to a $\p_i^*$-name using the isomorphism described 
in (1);

\item for all $j \in \lambda \setminus A$, 
$\p_j^*$ forces that $\dot \q_j = \{ \emptyset \}$ 
is the trivial forcing poset.
\end{enumerate}
\end{lemma}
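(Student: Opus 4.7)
The plan is to construct $\p_i^*$ and $\dot \q_j^*$ by recursion on $i \le \lambda$, simultaneously verifying by induction that the map $p \mapsto p^*$ described in (1) is order-preserving with dense image in $\p_i^*$; clause (1) then follows by the standard fact that such a map descends to an isomorphism onto a dense suborder of the separative quotient. Set $\p_0^* = \{ \emptyset \}$. At a successor $i+1$ with $i \notin A$, set $\dot \q_i^* = \{ \emptyset \}$, which gives $\p_{i+1}^* = \p_i^* * \dot \q_i^*$ forcing-equivalent to $\p_i^*$; this matches $\p_{i+1} = \p_i$ and the inductive data carries over. At a successor $i+1$ with $i \in A$, use the inductive dense embedding of $\p_i$ into $\p_i^*$ to canonically translate the $\p_i$-name $\dot T_i$ to a $\p_i^*$-name $\dot T_i^*$ for the same tree in the common generic extension, and set $\dot \q_i^* = P(\dot T_i^*)$, $\p_{i+1}^* = \p_i^* * \dot \q_i^*$. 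At a limit $\beta \le \lambda$, take $\p_\beta^*$ to be the finite support inverse limit.

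Order preservation of $p \mapsto p^*$ is immediate: if $q \le p$ in $\p_i$, then $p(\alpha) \subseteq q(\alpha)$ for each $\alpha \in \dom(p)$, equivalently $q(\alpha) \le p(\alpha)$ in $P(\dot T_\alpha)$, which translates to $q^*(\alpha) \le p^*(\alpha)$ in $\dot \q_\alpha^*$. For density at a successor $i+1$ with $i \in A$, given $r = (r', \dot s) \in \p_i^* * \dot \q_i^*$, note that since $\omega_1$ is preserved (as $\p_i^*$ is by induction a finite support iteration of $\omega_1$-c.c.\ forcings), any condition of $P(\dot T_i^*)$ is an actual element of $\textrm{Fn}(\omega_1,\omega)^V$. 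So we can extend $r'$ in $\p_i^*$ to some $r''$ deciding $\dot s$ as a specific $s \in \textrm{Fn}(\omega_1,\omega)$, and apply inductive density to obtain $p' \in \p_i$ with $(p')^* \le r''$; the name translation ensures $p' \Vdash_{\p_i} s \in P(\dot T_i)$, so $p := p' \cup \{(i,s)\}$ lies in $\p_{i+1}$ by Definition 6.2(4), and clearly $p^* \le r$ in $\p_{i+1}^*$. Density at a limit $\beta$ is immediate: any $r \in \p_\beta^*$ has finite support contained in some $\alpha < \beta$, so $r$ lies in the natural image of $\p_\alpha^*$ and the inductive hypothesis at $\alpha$ yields $p \in \p_\alpha \subseteq \p_\beta$ with $p^* \le r$.

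The main obstacle is carefully justifying the translation of names between $\p_i$ and $\p_i^*$ at successor stages $i \in A$. Once the inductive dense embedding of $\p_i$ into $\p_i^*$ is established, the two forcings share the same generic extensions (identified through the embedding), so every $\p_i$-name admits a canonical $\p_i^*$-name with the same realization, and forcing relations are preserved. This justifies both the definition of $\dot T_i^*$ in clause (2) and the key step in the density argument where we deduce $p' \Vdash_{\p_i} s \in P(\dot T_i)$ from $(p')^* \Vdash_{\p_i^*} s \in P(\dot T_i^*)$, completing the inductive construction and verification.
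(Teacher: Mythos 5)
The paper gives no argument for this lemma at all (its proof reads ``follows by standard arguments''), so your proposal is really a fleshing-out of the omitted standard argument, and the recursion you describe --- trivial forcing at stages outside $A$, $P(\dot T_i^*)$ at stages in $A$ with $\dot T_i^*$ obtained by translating $\dot T_i$ along the inductively established dense embedding, finite supports at limits --- is exactly the intended construction. Your density argument at successor stages is the right one: conditions of $P(\dot T_i^*)$ are finite functions in $\textrm{Fn}(\omega_1,\omega)^V$, so one may decide $\dot s$ to be a concrete $s$, pull back below the deciding condition into $\p_i$, and use the name translation to verify clause (4) of Definition 6.2 for $p' \cup \{(i,s)\}$; the limit case is immediate from finiteness of supports.

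One step is stated too loosely. You assert that order-preservation plus dense image suffices for $p \mapsto p^*$ to descend to an isomorphism into the separative quotient; this is false in general (a two-element antichain maps order-preservingly and densely onto a one-point poset). What is additionally needed, and what makes $\p_i$ and $\p_i^*$ forcing equivalent, is preservation of incompatibility, i.e.\ that compatibility is reflected: if $r \le p^*, q^*$ in $\p_i^*$, use your density claim to find $s \in \p_i$ with $s^* \le r$, and then observe that $s^* \le p^*$ forces $p(\alpha) \subseteq s(\alpha)$ for each $\alpha \in \dom(p)$ (these are canonical names for ground-model finite functions, ordered by reverse inclusion), so $s$ extends $p + s$ in the sense of Lemma 6.4 and hence $p$ and $q$ are compatible in $\p_i$. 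With that added, your induction establishes everything the lemma is used for (Corollary 6.6). A last caveat, which is a defect of the lemma's formulation rather than of your argument: the map $p \mapsto [p^*]$ need not be literally injective on the separative quotient, since a condition with $p(\alpha) = \emptyset$ at some coordinate is identified there with the condition omitting that coordinate; this does not affect the forcing equivalence, which is all that is needed downstream.
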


\begin{proof}
The proof follows by standard arguments.
\end{proof}

\begin{corollary}
Let $\p$ be a special $A$-iteration. 
Then $\p$ is $\omega_1$-c.c.\ and has the $\omega_1$-approximation property.
\end{corollary}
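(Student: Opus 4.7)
The plan is to reduce to a finite support iteration of specializing forcings and then invoke the $Y$-c.c.\ facts recalled at the end of Section 1. First, I would apply Lemma 6.5 to produce a finite support iteration $\langle \p_i^*, \dot \q_j^* : i \le \lambda,\ j < \lambda \rangle$ such that $\p = \p_\lambda$ is isomorphic to a dense suborder of the separative quotient of $\p_\lambda^*$. By clauses (2) and (3) of Lemma 6.5, each iterand $\dot \q_j^*$ is either the trivial forcing (when $j \notin A$) or of the form $P(\dot T_j^*)$, where by Definition 6.2(2) the name $\dot T_j^*$ is forced to be a tree on $\omega_1$ with no branches of length $\omega_1$.

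Next, I would invoke the facts from \cite{ycc} stated at the end of Section 1: for any tree $T$ with no branches of length $\omega_1$, the specializing forcing $P(T)$ is $Y$-c.c., and a finite support iteration of $Y$-c.c.\ forcings is $Y$-c.c. Hence each $\p_j^*$ forces $\dot \q_j^*$ to be $Y$-c.c., and therefore $\p_\lambda^*$ itself is $Y$-c.c. Since any $Y$-c.c.\ forcing is $\omega_1$-c.c.\ and has the $\omega_1$-approximation property, both properties hold of $\p_\lambda^*$.

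Finally, I would transfer these properties back to $\p$ using the dense embedding from Lemma 6.5(1). The $\omega_1$-c.c.\ is immediate, because a dense embedding into the separative quotient of $\p_\lambda^*$ induces a bijection between maximal antichains (modulo equivalence). The $\omega_1$-approximation property transfers as well, because $\p$ and $\p_\lambda^*$ have the same generic extensions of $V$ in the sense that $V$-generic filters on one canonically induce $V$-generic filters on the other with the same resulting model, so the pair $(V, V[G])$ is the same, and the $\omega_1$-approximation property is a property of that pair.

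There is no real obstacle here; the content of the corollary is essentially packaged into Lemma 6.5 and the $Y$-c.c.\ preservation theorem of \cite{ycc}. The only minor point to keep straight is that Lemma 6.5 gives a dense embedding into the separative quotient of $\p_\lambda^*$ rather than into $\p_\lambda^*$ itself, but passing to the separative quotient preserves both the $\omega_1$-c.c.\ and the $\omega_1$-approximation property, so this causes no difficulty.
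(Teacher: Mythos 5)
Your proposal is correct and follows the paper's own argument: the published proof likewise cites Lemma 6.5 to identify $\p$ (up to forcing equivalence) with a finite support iteration of specializing forcings, then applies the $Y$-c.c.\ facts from \cite{ycc} and the observation that $Y$-c.c.\ implies both the $\omega_1$-c.c.\ and the $\omega_1$-approximation property. The only difference is that you make explicit the routine transfer of these properties across the dense embedding into the separative quotient, which the paper absorbs into the phrase ``forcing equivalent.''
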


\begin{proof}
By Lemma 6.5, $\p$ is forcing equivalent to a finite support iteration of forcings 
which specialize trees of height and size $\omega_1$ which have 
no branches of length $\omega_1$. 
So $\p$ is forcing equivalent to a finite support iteration 
of $Y$-c.c.\ forcing posets, and hence is $Y$-c.c. 
But any $Y$-c.c.\ forcing poset is $\omega_1$-c.c.\ and has 
the $\omega_1$-approximation property.
\end{proof}

\begin{proposition}
Let $\lambda$ be a cardinal of cofinality at least $2^{\omega_1}$. 
Then there exists a special iteration 
$\langle \p_i : i \le \lambda \rangle$ which forces that every 
tree with underlying set $\omega_1$ which has 
no branches of length $\omega_1$ is special.
\end{proposition}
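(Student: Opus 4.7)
The plan is to construct the special iteration $\langle \p_i : i \le \lambda \rangle$ by recursion on $i$, using a bookkeeping function to select, at each successor stage $i$, a name $\dot T_i$ for a tree to be specialized. The key ingredients are Definition 6.2 for building $\p_{i+1}$ from $\p_i$ and $\dot T_i$, Lemma 6.3 giving that $\p_j$ is a regular suborder of $\p_i$ for $j < i$, and Corollary 6.6 ensuring that each $\p_i$ is $\omega_1$-c.c.

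First I would set up the bookkeeping in $V$. Using the hypothesis $\cof(\lambda) \ge 2^{\omega_1}$, I would fix an enumeration $\langle (\alpha_i, \sigma_i) : i < \lambda \rangle$ such that $\alpha_i \le i$, each $\sigma_i$ is a potential nice $S(\alpha_i)$-name for a subset of $\omega_1 \times \omega_1$, and every such pair $(\alpha, \sigma)$ with $\alpha < \lambda$ appears unboundedly often. The relevant cardinal arithmetic uses that $|S(\alpha)| \le \max(|\alpha|, \omega_1)$ for $\alpha < \lambda$, and that the $\omega_1$-c.c.\ of $\p_\alpha$ (obtained recursively from Corollary 6.6 applied to initial segments) bounds the number of nice names; the hypothesis $\cof(\lambda) \ge 2^{\omega_1}$ is precisely what makes the total fit in a sequence of length $\lambda$.

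Next I would define $\dot T_i$ recursively. At stage $i$, if $\sigma_i$ is actually a nice $\p_{\alpha_i}$-name, and $\p_i$ forces via the regular embedding from Lemma 6.3 that $\sigma_i$ names a tree with underlying set $\omega_1$ having no branches of length $\omega_1$, then set $\dot T_i := \sigma_i$; otherwise let $\dot T_i$ name a trivial tree. Then $\p_{i+1}$ is determined by Definition 6.2(4), and $\p_\beta$ at limit $\beta$ by Definition 6.2(6).

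To verify the conclusion, let $G$ be $V$-generic for $\p_\lambda$ and let $T \in V[G]$ be a tree with underlying set $\omega_1$ having no branches of length $\omega_1$. Since $\p_\lambda$ is $\omega_1$-c.c.\ by Corollary 6.6, $T$ admits a nice name $\dot T \in V$ whose support has cardinality at most $\omega_1$; because $\cof(\lambda) > \omega_1$, that support is bounded by some $\alpha < \lambda$, so $\dot T$ is naturally a $\p_\alpha$-name. By bookkeeping, choose $i \ge \alpha$ with $\alpha_i = \alpha$ and $\sigma_i = \dot T$. Then $T \in V[G_i]$ for $G_i := G \cap \p_i$, and since $V[G_i] \subseteq V[G]$, $T$ has no branches of length $\omega_1$ in $V[G_i]$ either. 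Hence $\dot T_i = \dot T$ by construction, so $\p_{i+1}$ adds a specializing function for $T$, making $T$ special in $V[G]$. The main obstacle throughout is the cardinal arithmetic underlying the bookkeeping step, which requires the full strength of $\cof(\lambda) \ge 2^{\omega_1}$; the verification itself is then straightforward once one observes that nonexistence of long branches is automatically inherited by submodels.
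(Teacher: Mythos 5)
Your construction follows the same route as the paper's (which is itself only a sketch): enumerate nice names over initial segments with a bookkeeping function of length $\lambda$, use the $\omega_1$-c.c.\ and $\cf(\lambda)>\omega_1$ to bound the support of a nice $\p_\lambda$-name for a given tree below $\lambda$, and catch the tree at a later stage. The counting, the bounding of supports, and the downward absoluteness of ``no branches of length $\omega_1$'' from $V[G]$ to $V[G_i]$ are all handled correctly.

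There is, however, one genuine gap in your verification, created by the way you phrased the decision rule at stage $i$. You set $\dot T_i := \sigma_i$ only when \emph{$\p_i$ forces} that $\sigma_i$ names a tree with no branches of length $\omega_1$; this is a statement about \emph{every} condition in $\p_i$. But in the verification you only establish that $\sigma_i^{G_i}=T$ has no branches of length $\omega_1$ in the one model $V[G_i]$, i.e.\ that \emph{some condition in $G_i$} forces this. The inference ``hence $\dot T_i=\dot T$ by construction'' is therefore a non sequitur: if even a single condition $q\in\p_{\alpha}$ (with $q\notin G$) forces $\dot T$ to have an uncountable branch, then your test fails at \emph{every} stage $i$ with $\sigma_i=\dot T$ (an $\omega_1$-branch of a fixed tree persists to all outer models, so the offending $q$ never gets ``repaired'' later in the iteration), $\dot T_i$ is always set to the trivial tree, and $T$ is never specialized. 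A nice name read off from $T$ gives no control over its interpretation under other generics, so this situation can really occur. The standard repair is to mix: replace $\sigma_i$ by the $\p_i$-name $\dot T_i$ which is forced to equal $\sigma_i$ whenever $\sigma_i$ is a tree on $\omega_1$ with no branches of length $\omega_1$, and to equal a fixed trivial tree otherwise. Then $\p_i$ outright forces $\dot T_i$ to be as required by Definition 6.2(2), and in your verification $\dot T_i^{G_i}=\sigma_i^{G_i}=T$ because the ``good'' case holds in $V[G_i]$. With that one change your argument is complete and coincides with the paper's intended proof.
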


\begin{proof}
We give a sketch of the proof. 
The special iteration is constructed by induction. 
We only need to specify the names $\dot T_j$ for $j < \lambda$, since 
the rest of the definition is determined by conditions 3--6 of Definition 6.2.

For $j < \lambda$, $\p_j$ will be a subset of $S(j)$, and hence will have size at most 
$\omega_1 \cdot |j|$, which is less than $\lambda$. 
Also $\p_j$ is $\omega_1$-c.c. 
So it is possible to enumerate all nice $\p_j$-names for trees with underlying set 
$\omega_1$ with no branches of length $\omega_1$ 
in order type less than or equal to $\lambda$. 
Using a bookkeeping function, we choose $\dot T_j$ to be such a name 
which was enumerated at some stage less than or equal to $j$. 
The bookkeeping function will ensure that any name that is enumerated 
will eventually be chosen as $\dot T_j$ for some $j < \lambda$. 

Given a nice $\p_\lambda$-name for a tree with underlying set $\omega_1$ 
with no branches of length $\omega_1$, 
since the cofinality of $\lambda$ is greater than $\omega_1$ and 
$\p_\lambda$ is $\omega_1$-c.c., it is easy to see that the name is actually 
a $\p_j$-name for some $j < \lambda$. 
So at some stage earlier than $\lambda$, we forced with $P(\dot T_j)$, 
and specialized the tree $\dot T_j$.
\end{proof}

\begin{corollary}
Let $\lambda$ be a cardinal with cofinality at least $2^{\omega_1}$. 
Then there exists a special iteration 
$\langle \p_i : i \le \lambda \rangle$ which forces that 
every tree with height and size $\omega_1$ which has no branches of 
length $\omega_1$ is special.
\end{corollary}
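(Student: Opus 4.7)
The plan is to deduce this immediately from Proposition~6.7 via the observation that specialness is isomorphism-invariant. Let $\langle \p_i : i \le \lambda \rangle$ be the special iteration produced by Proposition~6.7, and let $G$ be $V$-generic for $\p_\lambda$. Work in $V[G]$, and suppose $T$ is a tree of height and size $\omega_1$ with no branch of length $\omega_1$. Since $|T|=\omega_1$, fix any bijection $\phi \colon T \to \omega_1$, and push the tree order along $\phi$ by defining $\phi(x) <_{T'} \phi(y)$ iff $x <_T y$. Then $T'$ is a tree with underlying set $\omega_1$, order-isomorphic to $T$ via $\phi$, so $T'$ also has height $\omega_1$ and no branch of length $\omega_1$.

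By Proposition~6.7 applied in $V[G]$, $T'$ is special, so it admits a specializing function $f' \colon T' \to \omega$ (the notions coincide for such trees by Baumgartner's theorem cited in Section~4). Setting $f := f' \circ \phi$ yields a specializing function $f \colon T \to \omega$, and by Baumgartner's theorem $T$ is therefore special in the sense of Definition~4.1. There is essentially no obstacle here: the only content is the trivial relabeling $\phi$, which exists in $V[G]$, together with the fact already established in Proposition~6.7 that the iteration suffices when the underlying set is literally $\omega_1$.
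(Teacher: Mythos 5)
Your proposal is correct and is essentially identical to the paper's proof, which simply observes that any tree of height and size $\omega_1$ is isomorphic to a tree with underlying set $\omega_1$ and then applies Proposition~6.7. The detour through specializing functions and Baumgartner's equivalence is harmless but unnecessary, since specialness in the sense of Definition~4.1 is already invariant under tree isomorphism.
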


\begin{proof}
If $T$ is a tree with height and size $\omega_1$, then clearly $T$ is 
isomorphic to a tree with underlying set $\omega_1$. 
Now apply Proposition 6.7.
\end{proof}

We conclude this section by proving that a tail of a special iteration is 
forcing equivalent to a special iteration in an intermediate extension. 
The proof is elementary, but somewhat tedious; the reader should not 
feel guilty in just accepting the statement of Proposition 6.9 and skipping the proof.

Fix a special $A$-iteration 
$\langle \p_i : i \le \lambda \rangle$, with sequence of names 
$\langle \dot T_i : i \in A \rangle$. 
Let $i < j \le \lambda$, and 
suppose that $G_i$ is a $V$-generic filter on $\p_i$. 
By Lemma 1.1, if $G_{i,j}$ is a $V[G_i]$-generic filter on $\p_j / G_i$, 
then $G_{i,j}$ is a $V$-generic filter on $\p_j$, 
$G_{i,j} \cap \p_i = G_i$, and 
$V[G_i][G_{i,j}] = V[G_{i,j}]$. 
In particular, if $j \in A$, then in $V[G_i][G_{i,j}] = V[G_{i,j}]$, 
$\dot T_j^{G_{i,j}}$ is a tree with 
underlying set $\omega_1$ which has no branches 
of length $\omega_1$. 
In this situation, we will write 
$\dot T_{i,j}$ for a $\p_j / G_i$-name in $V[G_i]$ for this tree.

\begin{proposition}
Let $i < \lambda$, and let $G_i$ be a $V$-generic filter on $\p_i$. 
Then in $V[G_i]$, there exists a special $(A \setminus i)$-iteration 
$\langle \p_j' : j \le \lambda \rangle$, with sequence of 
names $\langle \dot T_j' : j \in A \setminus i \rangle$, satisfying 
the following properties:
\begin{enumerate}

\item for each $j \le \lambda$, the map 
$p \mapsto p \restriction (A \setminus i)$ is a dense embedding 
of $\p_j / G_i$ into $\p_j'$;

\item for each $j \in A \setminus i$, $\dot T_j'$ is the canonical translation 
of the name $\dot T_{i,j}$ to a $\p_j'$-name using the dense embedding from (1).
\end{enumerate}
\end{proposition}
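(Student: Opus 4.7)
My plan is to define the sequence $\langle \p_j' : j \le \lambda \rangle$ and the names $\langle \dot T_j' : j \in A \setminus i \rangle$ in $V[G_i]$ by recursion on $j$, driven by the clauses of Definition 6.2, while simultaneously verifying (1) at every stage. I set $\p_0' = \{\emptyset\}$; at a successor $j_0 + 1$ with $j_0 \in A \setminus i$, the inductive dense embedding $\pi_{j_0} : \p_{j_0}/G_i \to \p_{j_0}'$ lets me translate the $\p_{j_0}/G_i$-name $\dot T_{i, j_0}$ (derived from $\dot T_{j_0}$ via the quotient as in the comments before the proposition) into the $\p_{j_0}'$-name $\dot T_{j_0}'$, after which $\p_{j_0+1}'$ is defined by Definition 6.2(4); at limits I use Definition 6.2(6); and for $j \le i$ the recursion gives $\p_j' = \{\emptyset\}$, where (1) is vacuous since $\p_j / G_i \subseteq G_i$ and $G_i$ is a filter.

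The heart of the argument is the verification that $\pi_{j_0+1}$ is a dense embedding. Well-definedness into $\p_{j_0+1}'$ follows from the standard translation of forcing statements under a dense embedding. For both denseness of the image and preservation of incompatibility, I would prove a single combined lemma: given $p' \in \p_{j_0+1}'$ and finitely many $p_1, \ldots, p_n \in \p_{j_0+1}/G_i$ with $\pi_{j_0+1}(p_k) \ge p'$ for every $k$, there exists $r \in \p_{j_0+1}/G_i$ refining each $p_k$ with $\pi_{j_0+1}(r) \le p'$. (The cases $n = 0$ and $n = 2$ yield, respectively, denseness of the image and incompatibility preservation in contrapositive form.) To construct $r$, I apply the inductive hypothesis at $j_0$ to obtain $q \in \p_{j_0}/G_i$ refining each $p_k \restriction j_0$ with $\pi_{j_0}(q) \le p' \restriction j_0$. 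If $j_0 \in \dom(p')$, I then strengthen $q$ inside $\p_{j_0}/G_i$, via Lemma 1.3 applied to the dense set in $\p_{j_0}$ of conditions deciding whether $p'(j_0) \in P(\dot T_{j_0})$, to a $q^*$ making such a decision over $V$; the $V[G_i]$-level forcing statement delivered by the dense embedding forces $q^*$ to choose the positive alternative. Finally, I use genericity of $G_i$ in $\p_i$ to produce $s \in G_i$ below $q^* \restriction i$ and each $p_k \restriction i$, and then merge everything together via Lemma 6.4, obtaining $r$ (with $(j_0, p'(j_0))$ appended if needed).

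The main obstacle is the mismatch between forcing in $\p_{j_0}/G_i$ over $V[G_i]$ and forcing in $\p_{j_0}$ over $V$: membership in $\p_{j_0+1}$ is defined via the latter, but the dense embedding $\pi_{j_0}$ gives only the former. This is bridged by Lemma 1.3, which tells us that any $V$-level dense set in $\p_{j_0}$ remains dense in the quotient $\p_{j_0}/G_i$, so we can strengthen inside the quotient until a $V$-level decision is forced; a generic argument then pins that decision to the side consistent with the $V[G_i]$-level forcing. With this bridge in place, the verifications that each $\dot T_j'$ names a tree with the required properties, that the sequence $\langle \p_j' : j \le \lambda \rangle$ is a special $(A \setminus i)$-iteration, and that the dense-embedding property passes through limits (via finite support), are all routine.
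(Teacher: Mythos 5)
Your proposal is correct and follows essentially the same route as the paper: induction on $j$, trivial handling of stages $j \le i$ and limits, translation of $\dot T_{i,j}$ through the inductively given dense embedding, and at successor stages the same bridge between forcing over $V$ in $\p_{j_0}$ and forcing over $V[G_i]$ in $\p_{j_0}/G_i$ via Lemmas 1.1 and 1.3, finishing with a merge using Lemma 6.4. The only differences are organizational --- you package density and incompatibility preservation into one combined statement (the paper derives the latter from the former), and you decide the $V$-level statement ``$x \in P(\dot T_{j_0})$'' by meeting a dense set inside the quotient rather than pulling a condition from $G_i$ that forces the quotient-level statement --- and both variants are sound.
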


\begin{proof}
First consider $j \le i$. 
Then $\p_j'$ is the trivial poset $\{ \emptyset \}$, and 
$\p_j / G_i$ is equal to $G_i \cap \p_j$. 
Using the fact that $G_i \cap \p_j$ is a filter to show preservation of 
incompatibility, 
easily the map $p \mapsto p \restriction (A \setminus i) = \emptyset$ 
is a dense embedding of $\p_j / G_i = G_j \cap \p_j$ into 
$\p_j' = \{ \emptyset \}$.

Suppose that $i \le \beta \le \lambda$ is limit ordinal, and 
for all $\gamma < \beta$, the map 
$p \mapsto p \restriction (A \setminus i)$ is a dense embedding of 
$\p_\gamma / G_i$ into $\p_\gamma'$. 
Using the fact that $\p_\beta / G_i = \bigcup \{ \p_\gamma / G_i : 
\gamma < \beta \}$ and $\p_\beta' = \bigcup \{ \p_\gamma' : \gamma < \beta \}$, 
it is easy to check that the same is true of $\p_\beta / G_i$ 
and $\p_\beta'$.

Finally, assume that $i < j < \lambda$, 
and the map $p \mapsto p \restriction (A \setminus i)$ 
is a dense embedding of $\p_j / G_i$ into $\p_j'$. 
We will prove that the same is true for $\p_{j+1} / G_i$ and $\p_{j+1}'$. 
First assume that $j \notin A$. 
Then $\p_{j+1} = \p_j$, so $\p_{j+1} / G_i = \p_j / G_i$. 
Also $\p_{j+1}' = \p_j'$. 
So we are done by the inductive hypothesis.

Suppose that $j \in A$. 
In $V[G_i]$, $\dot T_{i,j}$ is a $\p_j / G_i$-name for a tree with underlying 
set $\omega_1$ which has no branches of length $\omega_1$. 
Since $p \mapsto p \restriction (A \setminus i)$ 
is a dense embedding of $\p_j / G_i$ into $\p_j'$, 
the name $\dot T_{i,j}$ translates under this dense embedding 
to a $\p_j'$-name $\dot T_j'$ 
for the same tree. 

Assume that $q \in \p_{j+1} / G_i$, and we will show that 
$q \restriction (A \setminus i)$ is in $\p_{j+1}'$. 
This follows from the inductive hypothesis if $j \notin \dom(q)$, so 
suppose that $j \in \dom(q)$. 
Since $q \restriction j \in \p_j / G_i$, 
by the inductive hypothesis $(q \restriction j) \restriction (A \setminus i) = 
(q \restriction (A \setminus i)) \restriction j$ is in $\p_j'$. 
As $q \in \p_{j+1}$, $q \restriction j \Vdash^V_{\p_j} q(j) \in P(\dot T_j)$. 
By the choice of $\dot T_{i,j}$, 
$q \restriction j \Vdash^{V[G_i]}_{\p_j / G_i} q(j) \in P(\dot T_{i,j})$. 
Hence $(q \restriction (A \setminus i)) \restriction j \Vdash^{V[G_i]}_{\p_j'} 
q(j) \in P(\dot T_j')$. 
So $q \restriction (A \setminus i) \in \p_{j+1}'$.

Let $p \in \p_{j+1}'$. 
We will find a condition 
$u$ in $\p_{j+1} / G_i$ 
such that $u \restriction (A \setminus i) \le p$. 
If $j \notin \dom(p)$, then $p \in \p_j'$. 
By the inductive hypothesis, there is $q \in \p_j / G_i$ such that 
$q \restriction (A \setminus i) \le p$. 
Since $\p_j \subseteq \p_{j+1}$ and $q \restriction i \in G_i$, 
$q \in \p_{j+1} / G_i$, and we are done.

Suppose that $j \in \dom(p)$. 
Let $x := p(j)$. 
By the inductive hypothesis, fix $q \in \p_j / G_i$ such that 
$q \restriction (A \setminus i) \le p \restriction i$. 
The proof would be finished if $q \cup \{ (j,x) \}$ was in $\p_{j+1} / G_i$, 
that is, if $q \cup \{ (j,x) \}$ was in $\p_{j+1}$. 
Unfortunately, we do not know that $q$ forces over $V$ that 
$x$ is in $P(\dot T_j)$, so it could be the case that 
$q \cup \{ (j,x) \}$ is not in $\p_{j+1}$. 
So we have to work a bit harder.

Since $q \restriction (A \setminus i) \le p \restriction i$, 
it follows that 
$$
q \restriction (A \setminus i) \Vdash^{V[G_i]}_{\p_j'} x \in P(\dot T_j').
$$
We claim that 
$$
q \Vdash^{V[G_i]}_{\p_j / G_i} x \in P(\dot T_{i,j}).
$$
Let $G_{i,j}$ be a $V[G_i]$-generic filter on $\p_j / G_i$ with 
$q \in G_{i,j}$. 
Let $T := \dot T_{i,j}^{G_{i,j}}$. 
We will prove that $x \in P(T)$. 
The image of $G_{i,j}$ under the dense embedding 
$s \mapsto s \restriction (A \setminus i)$ generates a 
$V[G_i]$-generic filter $G_j'$ on $\p_j'$. 
Since $q \in G_{i,j}$, $q \restriction (A \setminus i) \in G_j'$. 
And since $q \restriction (A \setminus i) \le p \restriction j$, 
$p \restriction j \in G_j'$. 
As $p \in \p_{j+1}'$, by definition we have that 
$$
p \restriction j \Vdash^{V[G_i]}_{\p_j'} p(j) = x \in P(\dot T_j').
$$
Since $p \in G_j'$, it follows that $x \in P((\dot T_j')^{G_j'})$. 
But by the choice of the name $\dot T_j'$, 
$(\dot T_j')^{G_j'} = (\dot T_{i,j})^{G_{i,j}} = T$. 
So $x \in P(T)$, proving the claim.

Fix a condition $s \in G_i$ such that 
$$
s \Vdash^V_{\p_i} q \Vdash^{V[\dot G_i]}_{\p_j / \dot G_i} x \in 
P(\dot T_{i,j}).
$$
Since $q \restriction i \in G_i$, without loss of generality we may 
assume that $s \le q \restriction i$. 
It follows that $t := s \cup (q \restriction [i,\lambda))$ is in $\p_j$, 
$t \le q$, and since $t \restriction i = s \in G_i$, 
$t \in \p_j / G_i$. 
Now the choice of $s$ implies that 
$$
t \Vdash^V_{\p_j} p(j) \in \dot T_j,
$$
as can be easily checked. 
So $u := t \cup \{ (j,x) \}$ is in $\p_{j+1} / G_i$, and 
$u \restriction (A \setminus i) \le p$.

It is obvious that the map $p \mapsto p \restriction (A \setminus i)$ 
preserves order. 
For the preservation of incompatibility, suppose that $p$ and $q$ are 
in $\p_{j+1} / G_i$, and $r \le p \restriction (A \setminus i), 
q \restriction (A \setminus i)$ in $\p_{j+1}'$. 
By what we just proved, there is $r_0 \in \p_{j+1} / G_i$ 
such that $r_0 \restriction (A \setminus i) \le r$ in $\p_{j+1}'$. 
Since $G_i$ is a filter, without loss of generality we may assume that 
$r_0 \restriction i \le p \restriction i, q \restriction i$. 
Then easily $r_0 \le p, q$ in $\p_{j+1} / G_i$.
\end{proof}

\begin{corollary}
Let $\langle \p_i : i \le \lambda \rangle$ be a special $A$-iteration. 
Fix $i < \lambda$, and let $G_i$ be a $V$-generic filter on $\p_i$. 
Then in $V[G_i]$, for all $j \le \lambda$, 
$\p_j / G_i$ is $\omega_1$-c.c.\ and has the 
$\omega_1$-approximation property.
\end{corollary}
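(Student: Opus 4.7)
The plan is to combine Proposition 6.9 with Corollary 6.6. Work in $V[G_i]$. By Proposition 6.9, there is a special $(A\setminus i)$-iteration $\langle \p_j' : j \le \lambda\rangle$ in $V[G_i]$ such that the map $p \mapsto p \restriction (A \setminus i)$ is a dense embedding of $\p_j / G_i$ into $\p_j'$ for every $j \le \lambda$. In particular, this map is a regular embedding, so the separative quotients (equivalently, the complete Boolean algebras) of $\p_j / G_i$ and $\p_j'$ are isomorphic, hence they are forcing equivalent.

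Next, I would apply Corollary 6.6 inside $V[G_i]$: since $\p_j'$ is a special $(A \setminus i)$-iteration in $V[G_i]$, it is $\omega_1$-c.c.\ and has the $\omega_1$-approximation property (both properties interpreted over $V[G_i]$). Both properties are invariant under dense embeddings, since they depend only on the generic extension produced: the $\omega_1$-c.c.\ because antichains of $\p_j/G_i$ map to antichains of $\p_j'$ and vice versa (via density, any antichain in $\p_j'$ can be refined into the image of $\p_j/G_i$), and the $\omega_1$-approximation property because any $V[G_i]$-generic filter on $\p_j / G_i$ generates a $V[G_i]$-generic filter on $\p_j'$ producing the same extension. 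Therefore $\p_j / G_i$ is $\omega_1$-c.c.\ and has the $\omega_1$-approximation property over $V[G_i]$.

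There is no real obstacle here beyond citing Proposition 6.9 and Corollary 6.6 correctly and noting the invariance of these two properties under dense (hence forcing) equivalence. The only small point worth stating explicitly is that the $\omega_1$-approximation property is a property of the pair $(V[G_i], V[G_i][H])$ for $H$ generic on the relevant forcing, and since $\p_j / G_i$ and $\p_j'$ produce the same generic extensions of $V[G_i]$, the property transfers directly between them.
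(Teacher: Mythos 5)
Your proposal is correct and is essentially the paper's own argument: the paper's proof of this corollary is exactly "by Proposition 6.9, $\p_j / G_i$ is forcing equivalent to a special $(A \setminus i)$-iteration, so we are done by Corollary 6.6." The extra detail you supply about the invariance of the $\omega_1$-c.c.\ and the $\omega_1$-approximation property under dense embeddings is accurate and harmless, just more explicit than the paper.
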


\begin{proof}
By Proposition 6.9, 
$\p_j / G_i$ is forcing equivalent to a special $(A \setminus i)$-iteration. 
So we are done by Corollary 6.6.
\end{proof}

\section{Factoring a special iteration over an elementary substructure}

In this section we will prove the following result.

\begin{thm}
Let $\vec P = \langle \p_i : i \le \lambda \rangle$ be a special iteration, 
with sequence of names $\vec T = \langle \dot T_i : i < \lambda \rangle$. 
Let $\theta \ge \omega_2$ be a regular cardinal such that 
$\vec P$ and $\vec T$ are in $H(\theta)$. 
Let $N$ be an elementary substructure of $H(\theta)$ such that 
$N$ has size $\omega_1$, $\omega_1 \subseteq N$, and  
$\vec P$ and $\vec T$ are in $N$.

Then:
\begin{enumerate}
\item $N \cap \p_\lambda$ is a regular suborder of $\p_\lambda$;
\item $N \cap \p_\lambda$ forces that 
$\p_\lambda / \dot G_{N \cap \p_\lambda}$ 
is forcing equivalent to a special 
$(\lambda \setminus N)$-iteration;
\item $N \cap \p_\lambda$ forces that 
$\p_\lambda / \dot G_{N \cap \p_\lambda}$ 
is $\omega_1$-c.c.\ and has the $\omega_1$-approximation property.
\end{enumerate}
\end{thm}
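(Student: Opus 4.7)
The plan is to prove all three claims by a simultaneous induction on $j \le \lambda$ of three statements: (a) $N \cap \p_j$ is a regular suborder of $\p_j$ via the projection $\pi_j(p) := p \restriction (\dom(p) \cap N)$; (b) $\pi_j(p) \in N \cap \p_j$ for every $p \in \p_j$; and (c) in any $V$-generic extension $V[G]$ by $N \cap \p_j$, the map $p \mapsto p \restriction (\lambda \setminus N)$ is a dense embedding of $\p_j / G$ into a special $(j \setminus N)$-iteration $\p'_j$ whose tree names at stages $i \in j \setminus N$ are the canonical translations of the $\p_j / G$-names $\dot T_{i,j}$ in the sense of the paragraph preceding Proposition 6.9. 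The three conclusions of the theorem then correspond respectively to (a), (c), and a consequence of (c) combined with Corollary 6.6 applied to $\p'_\lambda$, all at $j = \lambda$.

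The technical heart is the successor step $j = i + 1$ of (b) when $i \in N$, where one must verify $(p \restriction N) \restriction i \Vdash_{\p_i} p(i) \in P(\dot T_i)$. (That $p \restriction N$ lies in $N$, once it is shown to be in $\p_j$, is automatic, because $\omega_1 \subseteq N$ ensures every $p(i) \in \mathrm{Fn}(\omega_1,\omega)$ with $i \in N$ is a finite element of $N$.) By the inductive case of Corollary 6.6, $\p_i$ is $\omega_1$-c.c., so by elementarity there is a countable maximal antichain $A \in N$ of $\p_i$ deciding the statement ``$p(i) \in P(\dot T_i)$''; since $|A| \le \omega$ and $\omega_1 \subseteq N$, we have $A \subseteq N$. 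For any $a \in A$ compatible with $(p \restriction N) \restriction i$, the inductive case of (a) produces a common extension $b \in N \cap \p_i$ of $a$ and $(p \restriction N) \restriction i$. Lemma 6.4 then applies to $p \restriction i$ and $b$ (its hypothesis is satisfied because $b \le (p \restriction N) \restriction i$ and $\dom(b) \subseteq N$), yielding $c := (p \restriction i) + b$ in $\p_i$ with $c \le p \restriction i$ and $c \le b \le a$. Since $c$ forces $p(i) \in P(\dot T_i)$, $a$ cannot force the negation, so $a$ forces the affirmative. As $A$ is predense below $(p \restriction N) \restriction i$, this closes the step.

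For (a) I intend to apply Lemma 1.5 to $\pi_j$: the identity $\pi_j(p) = p$ for $p \in N \cap \p_j$ and the relation $q \le \pi_j(q)$ are immediate, and the projection property reduces, given $s \le \pi_j(q)$ in $N \cap \p_j$, to taking $r := q + s$ from Lemma 6.4, which satisfies $r \le q$ and $\pi_j(r) = (q \restriction N) + s \le s$. Limit stages are handled by the finite-support structure in Definition 6.2(6), which reduces any condition to an earlier stage. For (c), the recursive construction of the special $(j \setminus N)$-iteration $\p'_j$ in $V[G]$ mirrors Proposition 6.9: stages $i \in j \cap N$ contribute trivially because they are absorbed into $G$, while stages $i \in j \setminus N$ contribute the translated tree $\dot T'_i$. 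The dense-embedding verification is a routine adaptation of the calculations in Proposition 6.9, again using Lemma 6.4 to construct lifts. Finally, claim (3) of the theorem is obtained at $j = \lambda$ by applying Corollary 6.6 to $\p'_\lambda$ and transferring the $\omega_1$-c.c.\ and the $\omega_1$-approximation property across the dense embedding of (c).

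The main obstacle is precisely the successor step of (b) with $i \in N$ described above: upgrading the forcing relation ``$p \restriction i \Vdash p(i) \in P(\dot T_i)$'' to the weaker ``$(p \restriction N) \restriction i$ forces the same'' is exactly where one needs $\omega_1 \subseteq N$ to combine with the inductively established $\omega_1$-c.c.\ of $\p_i$, locating the deciding countable antichain inside $N$. Every other ingredient, including the limit-stage bookkeeping for (a) and the recursive setup for (c), is a straightforward adaptation of the proof of Proposition 6.9 once this obstacle is cleared.
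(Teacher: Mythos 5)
Your argument for conclusion (1) is correct and in fact takes a genuinely different route from the paper: you prove that $p \restriction N$ is a condition for \emph{every} $p \in \p_j$, via the antichain argument (a countable maximal antichain $A \in N$ deciding ``$p(i) \in P(\dot T_i)$'' lies inside $N$, and Lemma 6.4 shows any $a \in A$ compatible with $(p \restriction N) \restriction i$ is compatible with $p \restriction i$, hence forces the affirmative). The paper deliberately avoids this claim (``we do not know that $p \cap N$ is in $\p_i$'') and instead only proves that the set $E_i$ of such $p$ is \emph{dense} (Lemma 7.5), using $i_N := \min((N \cap (\lambda+1)) \setminus i)$ and elementarity to find a condition $p' \in N$ absorbing $p \restriction N$. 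Your version is stronger and, as far as I can see, sound; it would simplify the bookkeeping with $E_i^N$ later on.

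The genuine gap is in conclusion (2). For $\p'_\lambda$ to be a special $(\lambda \setminus N)$-iteration in $V[G_N]$, Definition 6.2(2) requires that for each $i \in \lambda \setminus N$ the name $\dot T'_i$ be forced by $\p'_i$ \emph{over $V[G_N]$} to be a tree with no branches of length $\omega_1$. The original iteration only certifies that $T_i = \dot T_i^{G_i}$ has no uncountable branches in $V[G_i]$; the relevant model here is $V[G_N][G_i^N] = V[G_i][G_N]$, a further extension by the tail generic $(\p_\lambda \cap N)/(G_i \cap N)$, and an $\omega_1$-c.c.\ forcing can perfectly well add an uncountable branch to such a tree (forcing with a Suslin tree is the standard example). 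Ruling this out is the crux of the paper's proof: Lemma 7.16 reduces it to showing that $(\p_\lambda \cap N)/(G_i \cap N)$ has the $\omega_1$-approximation property over $V[G_i]$ (Lemma 7.14), which in turn requires the detour through $j = \min((N\cap(\lambda+1))\setminus i)$, the identification $(\p_\lambda \cap N)/(G_j \cap N) = (\p_\lambda/G_j)\cap N[G_j]$, Proposition 6.9 applied inside $V[G_j]$, and the composition lemmas 1.7--1.8. Your proposal never addresses this; ``the translated tree $\dot T'_i$'' is simply asserted to be admissible. Without it, $\p'_\lambda$ is not known to be a special iteration, so Corollary 6.6 cannot be invoked and conclusion (3) falls as well. (A secondary, smaller omission: the ``routine adaptation'' of the dense-embedding verification also needs the analogue of Claim 7.19, i.e., upgrading ``$q$ forces $x \in P(\dot T_i^N)$ over $V[G_{i,N}]$'' to a $\p_{i+1}$-membership statement over $V$, which is not automatic.)
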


Note that (3) follows from (2) by Corollary 6.6.

The proof of Theorem 7.1(1) is given in Lemma 7.5(2). 
The rest of the section after that is devoted to proving Theorem 7.1(2). 
The proof of Theorem 7.1(2) is tedious; we suggest that the 
reader skip it on a first reading.

Fix for the remainder of the section $\vec P$, $\vec T$, $\theta$, and $N$ 
as described in Theorem 7.1.

\begin{lemma}
Let $i < j \le \lambda$, where $j \in N$.
\begin{enumerate}
\item the map $p \mapsto p \restriction i$ is a projection mapping from 
$\p_j \cap N$ into $\p_i \cap N$;
\item $p \restriction i = p$ for all $p \in \p_i \cap N$, and 
$q \le q \restriction i$ for all $q \in \p_j \cap N$;
\item $\p_i \cap N$ is a regular suborder of $\p_j \cap N$.
\end{enumerate}
\end{lemma}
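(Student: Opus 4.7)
The plan is to derive Lemma 7.2 almost entirely as a ``restriction to $N$'' of Lemma 6.3, with elementarity used to verify that the restriction map stays inside $N$. I would prove (2) first, then (1), and then read off (3) from Lemma 1.5.

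Part (2) is immediate: both clauses are just the global assertions of Lemma 6.3(4) applied to conditions that happen to lie in $N$, so there is nothing beyond a one-line citation.

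Part (1) is the substantive step. First I would check that $p \mapsto p \restriction i$ actually maps $\p_j \cap N$ into $\p_i \cap N$. For $p \in \p_j \cap N$, Lemma 6.3(2) gives $p \restriction i \in \p_i$; since $p, i \in N \prec H(\theta)$ and $p \restriction i$ is definable from $p$ and $i$, elementarity yields $p \restriction i \in N$. The map sends $\emptyset$ to $\emptyset$ and preserves order by Lemma 6.3, so only the projection lifting property remains. Given $q \in \p_j \cap N$ and $p \le q \restriction i$ in $\p_i \cap N$, Lemma 6.3(3) tells us that $r := p \cup (q \restriction [i,j))$ lies in $\p_j$, is below $q$, and satisfies $r \restriction i = p$. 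Because $p, q, i, j$ are all in $N$, so is $r$ (by elementarity applied to its definition from these parameters; the assumption $\omega_1 \subseteq N$ guarantees that finite functions assembled from objects in $N$ remain in $N$). Hence $r \in \p_j \cap N$ witnesses the lifting property.

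Part (3) then follows by inspection. The partial order on $\p_j \cap N$ restricts to the order on $\p_i \cap N$, so $\p_i \cap N$ is a suborder of $\p_j \cap N$ in the sense of Lemma 1.5. The projection mapping built in (1), together with the two identities in (2), are exactly the hypotheses of Lemma 1.5, which immediately delivers that $\p_i \cap N$ is a regular suborder of $\p_j \cap N$.

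The only potential obstacle is bookkeeping around the ``$\in N$'' clauses in (1): one must be confident that restricting, concatenating, and combining finitely supported conditions with parameters in $N$ produces objects in $N$. This is the point where $\omega_1 \subseteq N$ and $N \prec H(\theta)$ are used in combination, but no calculation is needed beyond noting that each such operation is a first-order definable function of its arguments.
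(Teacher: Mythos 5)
Your overall strategy --- prove (2) by citing Lemma 6.3(4), prove (1) by explicitly exhibiting the lifting witness $r = p \cup (q \restriction [i,j))$ via Lemma 6.3(3), and read (3) off Lemma 1.5 --- matches the paper's skeleton, and parts (2) and (3) are fine. But there is a genuine error in your treatment of (1): you repeatedly use $i$ as a parameter for elementarity (``since $p, i \in N$'', ``because $p, q, i, j$ are all in $N$''), whereas the hypothesis of the lemma puts only $j$ in $N$; the ordinal $i$ is arbitrary and need not belong to $N$. This is precisely the subtlety the lemma must handle, since it is later applied with $j = \lambda \in N$ but $i$ ranging over all ordinals below $\lambda$ (e.g.\ in defining $G_{i,N} := G_N \cap \p_i$ for every $i$). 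The paper's proof deals with it by introducing $i_N := \min((N \cap (\lambda+1)) \setminus i)$, noting that $N \cap i_N \subseteq i$, hence $\p_{i_N} \cap N = \p_i \cap N$ and $p \restriction i = p \restriction i_N$ for all $p \in \p_j \cap N$, and then running the elementarity argument for the projection $p \mapsto p \restriction i_N$ with the legitimate parameter $i_N \in N$.

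Your construction can be repaired without the $i_N$ device, but not by the argument you give. Since conditions are finite functions and (using $\omega_1 \subseteq N$ and $N \prec H(\theta)$) every element of a finite set belonging to $N$ again belongs to $N$, the objects $p \restriction i$ and $q \restriction [i,j)$ are finite subsets of $N$ and hence elements of $N$ regardless of whether $i \in N$; that is what actually puts your $r$ in $\p_j \cap N$. You gesture at this in your closing remark, but even there you describe the relevant operations as first-order definable functions of their arguments, and $i$ is one of those arguments, so as written the justification does not go through. Replace the appeals to definability from $i$ with the finiteness argument (or adopt the paper's passage to $i_N$), and the proof becomes correct.
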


\begin{proof}
(2) follows from Lemma 6.3(4), and (3) 
follows from (1), (2), and Lemma 1.5. 
It remains to prove (1). 
Let $i_N := \min((N \cap (\lambda+1)) \setminus i)$. 
By Lemma 6.3(4), the map $p \mapsto p \restriction i_N$ is a projection 
mapping of $\p_j$ into $\p_{i_N}$. 
Using the elementarity of $N$, it easily follows  
that the same map restricted to $\p_j \cap N$ is a projection 
mapping of $\p_j \cap N$ into $\p_{i_N} \cap N$. 
But if $q \in \p_{i_N}$, then by elementarity, 
$\dom(q) \subseteq i_N \cap N \subseteq i$, 
and hence $q \in \p_i$. 
It follows that $\p_{i_N} \cap N = \p_i \cap N$, and 
$p \restriction i = p \restriction i_N$ for all $p \in \p_j \cap N$. 
So this map is a projection mapping of $\p_j \cap N$ into $\p_i \cap N$.
\end{proof}

\begin{definition}
For each $i \le \lambda$, let $E_i$ denote the set of $p \in \p_i$ such that 
$p \restriction N$, the restriction of the function $p$ to $N \cap i$, is in $\p_i$.
\end{definition}

\begin{lemma}
Let $i < j \le \lambda$. 
Then:
\begin{enumerate}
\item $E_i \subseteq E_j$;
\item if $p \in E_j$, then $p \restriction i \in E_i$.
\end{enumerate}
\end{lemma}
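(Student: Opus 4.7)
The plan is to reduce both claims to Lemma 6.3 (parts (1) and (2)) together with the trivial observation that the two kinds of restriction on a finite function commute: for any $p$ and any set $X$, $(p \restriction X) \restriction i = (p \restriction i) \restriction X$, because both sides are just $p$ restricted to $\dom(p) \cap X \cap i$.

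For part (1), suppose $p \in E_i$, so that $p \in \p_i$ and $p \restriction N \in \p_i$ (where $p \restriction N$ is the restriction to $N \cap i$). Since $\dom(p) \subseteq i$, this object $p \restriction N$ agrees with the restriction of $p$ to $N \cap j$, which is how membership in $E_j$ reads the symbol $p \restriction N$. By Lemma 6.3(1), $\p_i \subseteq \p_j$, so both $p$ and $p \restriction N$ lie in $\p_j$, giving $p \in E_j$.

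For part (2), suppose $p \in E_j$, so $p \in \p_j$ and $p \restriction N \in \p_j$ (restriction to $N \cap j$). Applying Lemma 6.3(2) to each of these conditions produces $p \restriction i \in \p_i$ and $(p \restriction N) \restriction i \in \p_i$. By the commuting identity noted above, $(p \restriction N) \restriction i = (p \restriction i) \restriction N$ (and this equals the restriction of $p \restriction i$ to $N \cap i$, as required by the definition of $E_i$). So $(p \restriction i) \restriction N \in \p_i$, whence $p \restriction i \in E_i$.

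I do not anticipate any real obstacle: both parts are essentially bookkeeping, with the only subtle point being to confirm that the notation $p \restriction N$ has the intended meaning in each relevant $\p_k$ — and that is immediate from the fact that a condition in $\p_k$ has domain contained in $k$, so restriction to $N$ and restriction to $N \cap k$ coincide.
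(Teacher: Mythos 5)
Your proof is correct and follows exactly the paper's argument: part (1) is Lemma 6.3(1) applied to $p$ and $p \restriction N$, and part (2) is Lemma 6.3(2) applied to both, together with the commutation $(p \restriction N) \restriction i = (p \restriction i) \restriction N$. The extra care you take about whether "restriction to $N$" means $N \cap i$ or $N \cap j$ is a fine point the paper leaves implicit, and your resolution of it is right.
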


\begin{proof}
Let $p \in E_i$. 
Then $p \in \p_i$ and $p \restriction N \in \p_i$. 
Since $\p_i \subseteq \p_j$ by Lemma 6.3(1), 
it follows that $p \in \p_j$ and $p \restriction N \in \p_j$. 
So $p \in E_j$, which proves (1). 
For (2), let $p \in E_j$. 
Then $p \in \p_j$ and $p \restriction N \in \p_j$. 
By Lemma 6.3(2), $p \restriction i \in \p_i$ and 
$(p \restriction i) \restriction N = (p \restriction N) \restriction i 
\in \p_i$. 
So $p \restriction i \in E_i$.
\end{proof}

We now prove Theorem 7.1(1). 
We also prove that $E_i$ is dense in $\p_i$, since that fact 
follows by the same argument.

\begin{lemma}
Let $i \le \lambda$. 
Then:
\begin{enumerate}
\item $E_i$ is dense in $\p_i$; 
\item $N \cap \p_i$ is a regular suborder of $\p_i$.
\end{enumerate}
\end{lemma}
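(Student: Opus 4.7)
The plan is to prove (1) and (2) simultaneously by induction on $i \le \lambda$, deducing (2) at level $i$ from (1) at level $i$, and using both at level $j$ in the inductive step of (1) at level $j+1$. The key observation that makes the whole argument go through is that every value $p(\alpha)$ is a finite function from $\omega_1$ to $\omega$; since $\omega_1 \subseteq N$, we have $p(\alpha) \in N$ automatically, so the only question about $p \restriction N$ lying in $N \cap \p_i$ is whether it is a condition at all.

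For (1), the base case $i = 0$ is trivial, and at a limit $\beta$, any $p \in \p_\beta$ has finite support contained in some $\gamma < \beta$, so I apply the inductive density of $E_\gamma$ and move the result up via Lemma 7.4. In the successor step $i = j+1$, the only substantive case is $j \in N \cap \dom(p)$; write $p_0 := p \restriction j$ and $x := p(j)$, noting $x \in N$. By (1) at level $j$, pick $q_0 \le p_0$ in $E_j$. A direct amalgamation via Lemma 6.4 shows that every extension of $q_0 \restriction N$ inside $N \cap \p_j$ is compatible with $q_0$ in $\p_j$. Because $x, \dot T_j \in N$, the set $D \subseteq \p_j$ of conditions deciding ``$x \in P(\dot T_j)$'' is dense and in $N$; by elementarity $D \cap N$ is dense in $N \cap \p_j$, so I pick $r \le q_0 \restriction N$ in $D \cap N$. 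Any common extension $u \le r, q_0$ in $\p_j$ forces $x \in P(\dot T_j)$ (since $u \le q_0 \le p_0$), so $r$ itself must force $x \in P(\dot T_j)$. Setting $w := r + q_0$ via Lemma 6.4, a domain computation gives $w \restriction N = r$, so $w \in E_j$; then $q := w \cup \{(j,x)\}$ is in $\p_{j+1}$, its restriction $r \cup \{(j,x)\}$ is in $\p_{j+1}$ because $r \Vdash x \in P(\dot T_j)$, and $q \le p$.

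For (2) at level $i$, given a maximal antichain $A$ of $N \cap \p_i$ and an arbitrary $p \in \p_i$, I use (1) to refine $p$ to some $q \in E_i$. Then $q \restriction N \in N \cap \p_i$, so some $a \in A$ is compatible with $q \restriction N$ in $N \cap \p_i$; choosing a common refinement $r \in N \cap \p_i$ and forming $r + q$ via Lemma 6.4 witnesses compatibility of $r$ (hence of $a$) with $q$ in $\p_i$. Thus $A$ is predense below $p$ in $\p_i$, so $N \cap \p_i$ is a regular suborder of $\p_i$.

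The main obstacle is Subcase B2 of the successor step: the inductive form of (2) gives a reduction of $p_0$ inside $N \cap \p_j$, but what is really needed is a reduction that additionally forces the finite condition $x$ to lie in $P(\dot T_j)$. The trick is to find a deciding condition $r \in N \cap \p_j$ by elementarity, and then use the compatibility of $r$ with $q_0$ (guaranteed by the amalgamation in Lemma 6.4) to rule out the wrong decision. Everything else is routine bookkeeping with domains and with the operation $+$ of Lemma 6.4.
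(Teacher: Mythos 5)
Your argument for the density of $E_i$ takes a genuinely different route from the paper's. The paper avoids induction entirely: given $p \in \p_i$, it observes that $p \restriction N = p \cap N$ is a finite element of $N$ (precisely your observation that $\textrm{Fn}(\omega_1,\omega) \subseteq N$), uses elementarity --- with $p$ itself as the witness --- to find $p' \in N \cap \p_{i_N}$ with $p \restriction N \subseteq p'$, where $i_N := \min((N \cap (\lambda+1)) \setminus i)$, extends $p'$ below a member of the given antichain inside $N$, and amalgamates with $p$ via Lemma 6.4; density of $E_i$ and predensity of maximal antichains then fall out of one construction. Your induction instead confronts, at a successor step with $j \in N \cap \dom(p)$, the requirement that the $N$-restriction force ``$x \in P(\dot T_j)$,'' and resolves it by finding a condition of $N \cap \p_j$ below $q_0 \restriction N$ deciding that statement and using its compatibility with $q_0$ to exclude the negative decision. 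That step is correct; note only that with the convention of Lemma 6.4 you want $q_0 + r$ rather than $r + q_0$, since the hypothesis there is $p(\alpha) \subseteq q(\alpha)$ on the overlap and the sum prefers its second argument --- which is exactly what your computation $w \restriction N = r$ needs. Your version trades the paper's single elementarity step for a longer but still valid per-coordinate argument.

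Part (2), however, has a genuine gap: you verify only clause (b) of the definition of regular suborder from Section 1 (maximal antichains of $N \cap \p_i$ are predense in $\p_i$) and never address clause (a), that conditions incompatible in $N \cap \p_i$ remain incompatible in $\p_i$. Clause (a) does not follow from (b), and it is not automatic here: compatibility in $\p_i$ is a forcing-theoretic condition (a common extension must satisfy the requirements $\Vdash p(\alpha) \in P(\dot T_\alpha)$), so one cannot simply take unions of compatible finite functions. The paper's argument is: if $p, q \in N \cap \p_i$ are compatible in $\p_i$, they are compatible in $\p_{i_N}$, which is an element of $N$, so by elementarity there is a common extension $r \in N \cap \p_{i_N}$; since $\dom(r) \subseteq N \cap i_N \subseteq i$, in fact $r \in N \cap \p_i$. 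Note the wrinkle that $i$ itself need not belong to $N$, which is why one passes to $i_N$ (cf.\ Lemma 7.2, where $\p_{i_N} \cap N = \p_i \cap N$ is used for the same reason). Once this clause is supplied, your proof is complete.
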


\begin{proof}
Suppose that $p$ and $q$ are in $N \cap \p_i$, and are compatible in $\p_i$. 
We will show that $p$ and $q$ are compatible in $\p_i \cap N$. 
Let $i_N := \min((N \cap (\lambda+1)) \setminus i)$. 
Since $\p_i$ is a regular suborder of $\p_{i_N}$ 
by Lemma 6.3(5), $p$ and $q$ are 
compatible in $\p_{i_N}$. 
By elementarity, there is $r \in N \cap \p_{i_N}$ such that 
$r \le p, q$. 
Then $\dom(r) \subseteq N \cap i_N \subseteq i$, so $r \in \p_i$. 
Hence $r \in \p_i \cap N$ and $r \le p, q$.

Let $A$ be a maximal antichain of $N \cap \p_i$, and we will show 
that $A$ is predense in $\p_i$. 
Let $p \in \p_i$. 
We will find $r \in \p_i$ and $s \in A$ such that $r \le s, p$. 
Moreover, we will have that 
$r \restriction N \in \p_i$, and hence that $r \in E_i$. 
This proves both that $A$ is predense in $\p_i$, and that $E_i$ 
is dense in $\p_i$.

Since $\omega_1 \subseteq N$, $\textrm{Fn}(\omega_1,\omega)$ 
is a subset of $N$. 
It follow that $p \restriction N$ is equal to $p \cap N$, which is in $N$ 
since $p \cap N$ is finite. 
However, we do not know that $p \cap N$ is in $\p_i$. 
Since $p \in \p_i \subseteq \p_{i_N}$, 
by the elementarity of $N$ we can fix $p'$ in $\p_{i_N} \cap N$ such that 
$p \restriction N \subseteq p'$.

Since $A$ is a maximal antichain in $\p_i \cap N$, $A$ is also 
a maximal antichain of $\p_{i_N} \cap N$. 
So we can fix $s \in A$ 
and $q \in \p_{i_N} \cap N$ such that $q \le p', s$. 
Then by elementarity, $\dom(q) \subseteq i_N \cap N \subseteq i$, 
so $q \in \p_i \cap N$. 
As $\dom(q) \subseteq N$ and 
$\dom(p \restriction N) \subseteq \dom(p') \subseteq \dom(q)$, we 
have that $\dom(p) \cap \dom(q) = \dom(p \restriction N)$. 
And since $p \restriction N \subseteq p'$ and $q \le p'$, 
it follows that for all $i \in \dom(p \restriction N)$,  $p(i) \subseteq q(i)$. 
By Lemma 6.4, $r := p + q$ exists and $r \le p, q$. 
Hence $r \le p, s$. 
Moreover, by the definition of $p + q$, 
$r \restriction N = q \in \p_i$, so $r \in E_i$.
\end{proof}

\begin{notation}
Fix $i \le \lambda$, and let $G_{i,N}$ be a $V$-generic filter on $\p_i \cap N$. 
Let $E_i^N$ denote the set $E_i \cap (\p_i / G_{i,N})$.
\end{notation}
 
Since $E_i$ is dense in $\p_i$, $E_i^N$ is dense in $\p_i / G_{i,N}$ by 
Lemma 1.3.

\begin{lemma}
Fix $i \le \lambda$, and let $G_{i,N}$ be a $V$-generic 
filter on $\p_i \cap N$. 
Then for all $p \in E_i$, $p \in E_i^N$ iff 
$p \restriction N \in G_{i,N}$.
\end{lemma}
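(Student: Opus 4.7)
The plan is to use the characterization that a condition lies in a $V$-generic filter on $\p_i \cap N$ iff it is compatible with every member of that filter (the ``ultrafilter'' property of generics), combined with the density of $E_i$ in $\p_i$ (Lemma 7.5(1)) and the mixing operation $+$ from Lemma 6.4. First I would observe that whenever $p \in E_i$, the restriction $p \restriction N$ actually lies in $\p_i \cap N$: by definition of $E_i$ it lies in $\p_i$, its domain sits inside $N$, each of its values is in $\textrm{Fn}(\omega_1,\omega) \subseteq N$ (using $\omega_1 \subseteq N$), and $N$ is closed under finite subsets, so $p \restriction N \in N$.

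For the direction $(\Leftarrow)$, suppose $p \restriction N \in G_{i,N}$. To show $p \in \p_i / G_{i,N}$, let $s \in G_{i,N}$ be arbitrary and use the filter property to pick $t \in G_{i,N}$ with $t \le s, p \restriction N$ in $\p_i \cap N$, hence in $\p_i$. For $\alpha \in \dom(p) \cap \dom(t)$, the fact that $\dom(t) \subseteq N$ forces $\alpha \in \dom(p) \cap N = \dom(p \restriction N)$, and $t \le p \restriction N$ gives $p(\alpha) = (p \restriction N)(\alpha) \subseteq t(\alpha)$. So Lemma 6.4 applies and produces $p + t \le p, t \le p, s$ in $\p_i$, witnessing compatibility of $p$ with $s$.

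For the direction $(\Rightarrow)$, assume $p \in E_i^N$. Applying Lemma 1.2 to the regular inclusion $\p_i \cap N \subseteq \p_i$ furnished by Lemma 7.5(2), fix $s \in G_{i,N}$ such that every extension of $s$ in $\p_i \cap N$ is compatible with $p$ in $\p_i$. To conclude $p \restriction N \in G_{i,N}$, I would verify that $p \restriction N$ is compatible in $\p_i \cap N$ with every $u \in G_{i,N}$; this suffices by the standard ``ultra'' property of generic filters (apply genericity to the dense set $\{r : r \le p \restriction N\} \cup \{r : r \perp p \restriction N\}$ inside $\p_i \cap N$). Given $u \in G_{i,N}$, pick $v \in G_{i,N}$ with $v \le u, s$, so that $v$ is compatible with $p$ in $\p_i$, and choose $r \le v, p$ in $\p_i$. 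The key move is then to refine $r$ using the density of $E_i$ (Lemma 7.5(1)) to some $r' \le r$ in $E_i$; then $r' \restriction N \in \p_i$ by definition of $E_i$, lies in $N$ by the opening observation, and extends both $v \restriction N = v \le u$ and $(p \restriction N) \restriction N = p \restriction N$.

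The potential stumbling block is the $(\Rightarrow)$ direction: one cannot simply take $r \restriction N$ as a common refinement because without $r \in E_i$ there is no guarantee $r \restriction N$ is even in $\p_i$. The density of $E_i$ proved in Lemma 7.5 is exactly the tool that bridges this gap, and recognizing this is the only nontrivial step; the rest is bookkeeping about restrictions and applications of Lemma 6.4.
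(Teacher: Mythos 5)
Your proof is correct and follows essentially the same route as the paper: the paper simply observes that $p \mapsto p \restriction N$ is a projection mapping from $E_i$ onto $\p_i \cap N$ fixing $\p_i \cap N$ and dominated by the identity, and then cites Lemma 1.5, whose proof is exactly the two-directional compatibility argument (via Lemma 6.4 and the density of $E_i$) that you carry out by hand. The only difference is packaging, not substance.
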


\begin{proof}
Using Lemma 6.4, it is easy to check that the map $p \mapsto p \restriction N$ 
is a projection mapping from $E_i$ into $E_i \cap N = \p_i \cap N$, 
which satisfies that $p \restriction N = p$ for all $p \in E_i \cap N$, 
and $q \le q \restriction N$ for all $q \in E_i$. 
By Lemma 1.5, it follows that 
the set of $p \in E_i$ such that $p$ is compatible in $E_i$ 
with every condition in $G_{i,N}$ is equal to the set of $p \in E_i$ 
such that $p \restriction N \in G_{i,N}$. 
But a condition in $E_i$ is compatible in $E_i$ with every condition in 
$G_{i,N}$ iff it is in 
$E_i \cap (\p_i / G_{i,N}) = E_i^N$.
\end{proof}

Given a $V$-generic filter $G_N$ on $\p_\lambda \cap N$,  
for each $i \le \lambda$, let $G_{i,N} := G_N \cap \p_i$. 
Since $\p_i \cap N$ is a regular suborder of $\p_\lambda \cap N$, 
$G_{i,N}$ is a $V$-generic filter on $\p_i \cap N$. 
Also, for all $i < j \le \lambda$, we have that 
$G_{i,N} \subseteq G_{j,N}$, and for all $s \in G_{j,N}$, 
$s \restriction i \in G_{i,N}$.

\begin{lemma}
Let $i < j \le \lambda$. Then:
\begin{enumerate}
\item $E_i^N \subseteq E_j^N$;
\item if $p \in E_j^N$, then $p \restriction i \in E_i^N$.
\end{enumerate}
\end{lemma}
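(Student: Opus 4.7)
The plan is to reduce the statement to the two preceding lemmas. By Lemma 7.6, membership in $E_i^N$ is captured by the conjunction: $p \in E_i$ and $p \restriction N \in G_{i,N}$. So both parts of the lemma will follow by checking that each of these two conditions is preserved under the relevant operation.

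For (1), let $p \in E_i^N$. Then $p \in E_i$ and $p \restriction N \in G_{i,N}$. By Lemma 7.4(1), $E_i \subseteq E_j$, so $p \in E_j$. Since $\p_i \cap N$ is a regular suborder of $\p_j \cap N$ (Lemma 7.2(3)) and $G_{i,N} = G_N \cap \p_i$ while $G_{j,N} = G_N \cap \p_j$, we have $G_{i,N} \subseteq G_{j,N}$. Hence $p \restriction N \in G_{j,N}$. Applying Lemma 7.6 in the other direction yields $p \in E_j^N$.

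For (2), let $p \in E_j^N$, so $p \in E_j$ and $p \restriction N \in G_{j,N}$. Lemma 7.4(2) immediately gives $p \restriction i \in E_i$, so the one thing left to verify is $(p \restriction i) \restriction N \in G_{i,N}$. Since $p \restriction N$ is just the function-restriction of $p$ to $N$, we have $(p \restriction i) \restriction N = (p \restriction N) \restriction i$. Now $p \restriction N \in G_{j,N} \subseteq G_N$, and by Lemma 6.3(4) applied in $N$ (or by Lemma 7.2(2)), $p \restriction N \le (p \restriction N) \restriction i$; as $G_N$ is a filter it is upward closed in the order, so $(p \restriction N) \restriction i \in G_N$. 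But $\dom((p \restriction N) \restriction i) \subseteq i$, so $(p \restriction N) \restriction i \in \p_i \cap N$ and therefore belongs to $G_N \cap \p_i = G_{i,N}$. Applying Lemma 7.6 once more, $p \restriction i \in E_i^N$.

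Neither step presents a real obstacle: this is essentially a bookkeeping argument, and the only point one has to be careful about is that the symbol $p \restriction N$ denotes the same finite set of ordered pairs whether $p$ is viewed in $\p_i$ or in $\p_j$, so there is no ambiguity in passing between levels.
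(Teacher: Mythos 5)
Your proof is correct and follows essentially the same route as the paper: both parts reduce to the characterization of $E_i^N$ via $p \in E_i$ and $p \restriction N \in G_{i,N}$ (this is Lemma 7.7, not 7.6 — item 7.6 is the Notation), combined with Lemma 7.4 and the observation that $s \in G_{j,N}$ implies $s \restriction i \in G_{i,N}$, which the paper records in the remarks just before the lemma and which you correctly re-derive from upward closure of $G_N$ under the projection $s \mapsto s \restriction i$.
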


\begin{proof}
(1) Let $p \in E_i^N$. 
Then $p \in E_i$, and by Lemma 7.7, $p \restriction N \in G_{i,N}$. 
But $E_i \subseteq E_j$ by Lemma 7.4(1), and $G_{i,N} \subseteq G_{j,N}$. 
So $p \in E_j$ and $p \restriction N \in G_{j,N}$. 
By Lemma 7.7, $p \in E_j^N$.

(2) Let $p \in E_j^N$. 
Then by Lemma 7.7, $p \in E_j$ and $p \restriction N \in G_{j,N}$. 
By Lemma 7.4(2), $p \restriction i \in E_i$, and by the comments preceding this 
lemma, $(p \restriction i) \restriction N = (p \restriction N) \restriction i$ 
is in $G_{i,N}$. 
By Lemma 7.7, $p \restriction i \in E_i^N$.
\end{proof}

\begin{notation}
Let $i \le \lambda$. 
In $V[G_{i,N}]$, define $\p_i^N$ as the suborder of $S(i)$ consisting of 
functions of the form $p \restriction (i \setminus N)$, where $p \in E_i^N$.
\end{notation}

So $\p_i^N$ consists of functions in $E_i^N$, with their fragment belonging 
to $N$ removed.

\begin{lemma}
Let $i < j \le \lambda$. Then:
\begin{enumerate}
\item $\p_i^N \subseteq \p_j^N$;
\item for all $p \in \p_j^N$, $p \restriction i \in \p_i^N$. 
\end{enumerate}
\end{lemma}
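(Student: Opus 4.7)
The plan is to derive both clauses directly from Lemma 7.8 together with the definition of $\p_i^N$, since once one unpacks the notation the statements are essentially formal.

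For (1), I would take an arbitrary $q \in \p_i^N$ and write it as $q = p \restriction (i \setminus N)$ for some $p \in E_i^N$, which is possible by definition. By Lemma 7.8(1), $E_i^N \subseteq E_j^N$, so $p \in E_j^N$. The key observation is that since $p \in \p_i$ we have $\dom(p) \subseteq i \subseteq j$, and therefore $p \restriction (i \setminus N) = p \restriction (j \setminus N)$. Hence $q$ is of the form $p \restriction (j \setminus N)$ with $p \in E_j^N$, witnessing $q \in \p_j^N$.

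For (2), I would start with $p \in \p_j^N$ and fix $r \in E_j^N$ with $p = r \restriction (j \setminus N)$. By Lemma 7.8(2), $r \restriction i \in E_i^N$. Then I would just compute
\[
p \restriction i = \bigl(r \restriction (j \setminus N)\bigr) \restriction i = r \restriction (i \setminus N) = (r \restriction i) \restriction (i \setminus N),
\]
which exhibits $p \restriction i$ in the form required to lie in $\p_i^N$.

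Neither part presents a real obstacle; the only point worth pausing on is that the two different-looking restrictions agree, which is immediate from $\dom(p) \subseteq i$ (in (1)) and from the compatibility of restrictions (in (2)). All of the substantive work — that $E_i^N$ behaves well under passage to a larger stage and under $r \mapsto r \restriction i$ — has already been absorbed into Lemma 7.8, so the present lemma is essentially a bookkeeping consequence.
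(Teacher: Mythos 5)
Your proof is correct and follows the same route as the paper: both clauses are reduced to Lemma 7.8 plus the definition of $\p_i^N$, with the same restriction computations (the paper writes ``easily $p = p_0 \restriction (j \setminus N)$'' where you justify it via $\dom(p) \subseteq i$). No issues.
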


\begin{proof}
(1) Let $p \in \p_i^N$. 
Then there is $p_0 \in E_i^N$ such that 
$p = p_0 \restriction (i \setminus N)$. 
Since $E_i^N \subseteq E_j^N$ by Lemma 7.8(1), 
$p_0 \in E_j^N$. 
Easily $p = p_0 \restriction (j \setminus N)$, so $p \in \p_j^N$.

(2) Let $p \in \p_j^N$. 
Then there is $p_0 \in E_j^N$ such that 
$p = p_0 \restriction (j \setminus N)$. 
By Lemma 7.8(2), $p_0 \restriction i \in E_i^N$, and 
$p \restriction i = (p_0 \restriction (j \setminus N)) \restriction i 
= (p_0 \restriction i) \restriction (i \setminus N)$. 
So $p \restriction i \in \p_i^N$.
\end{proof}

\begin{definition}
Define $\pi_i : E_i^N \to \p_i^N$ in $V[G_{i,N}]$ 
by letting 
$$
\pi_i(p) = p \restriction (i \setminus N),
$$
for all $p \in E_i^N$.
\end{definition}

\begin{lemma}
The function $\pi_i$ is a dense embedding of $E_i^N$ onto $\p_i^N$. 
Hence $\p_i^N$ is forcing equivalent to $\p_i / G_{i,N}$.
\end{lemma}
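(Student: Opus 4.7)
The proof plan is to verify that $\pi_i$ satisfies the three defining properties of a dense embedding, and then appeal to the density of $E_i^N$ in $\p_i / G_{i,N}$.

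First observe that $\pi_i$ maps $E_i^N$ \emph{onto} $\p_i^N$ by the very definition of $\p_i^N$, so the density clause (the image being dense in the target) is automatic. Order preservation is also immediate from the definition of the ordering on $S(i)$: if $q \le p$ in $E_i^N$, then $\dom(p)\subseteq \dom(q)$ and $p(\alpha)\subseteq q(\alpha)$ for all $\alpha\in\dom(p)$, and these same relations survive after intersecting the domains with $i\setminus N$.

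The only substantive step is preservation of incompatibility, or equivalently, lifting compatibility: suppose $p,q\in E_i^N$ and $\pi_i(p),\pi_i(q)$ share a common extension in $\p_i^N$. Pick $s\in E_i^N$ with $s\restriction(i\setminus N)\le \pi_i(p),\pi_i(q)$. Since $p,q,s\in E_i^N$, Lemma 7.7 gives $p\restriction N,\ q\restriction N,\ s\restriction N\in G_{i,N}$, so by genericity choose $u\in G_{i,N}$ with $u\le p\restriction N,\ q\restriction N,\ s\restriction N$ in $\p_i\cap N$. Because $u\le s\restriction N$, for every $\alpha\in\dom(u)\cap\dom(s)$ we have $\alpha\in N$, hence $\alpha\in\dom(s\restriction N)$ and $s(\alpha)=(s\restriction N)(\alpha)\subseteq u(\alpha)$. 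Lemma 6.4 therefore applies and produces $s+u\in\p_i$ with $s+u\le s,u$. A direct computation (using $\dom(u)\subseteq N$ and $\dom(s)\cap N\subseteq \dom(u)$) shows that $(s+u)\restriction N=u$ and $(s+u)\restriction(i\setminus N)=s\restriction(i\setminus N)$. Combining these with $u\le p\restriction N$ and $s\restriction(i\setminus N)\le \pi_i(p)$ yields $s+u\le p$; similarly $s+u\le q$. Finally $s+u\in E_i$, since $(s+u)\restriction N=u\in\p_i\cap N\subseteq\p_i$, and $(s+u)\restriction N=u\in G_{i,N}$, so by Lemma 7.7 we have $s+u\in E_i^N$, witnessing compatibility of $p$ and $q$ in $E_i^N$.

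Thus $\pi_i$ is a dense embedding of $E_i^N$ onto $\p_i^N$. To conclude forcing equivalence with $\p_i/G_{i,N}$, note that $E_i$ is dense in $\p_i$ by Lemma 7.5(1), hence by Lemma 1.3 the set $E_i^N = E_i\cap(\p_i/G_{i,N})$ is dense in $\p_i/G_{i,N}$. Therefore $\p_i^N$, being densely embedded by a dense subset of $\p_i/G_{i,N}$, is forcing equivalent to $\p_i/G_{i,N}$.

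The main obstacle is the bookkeeping in the incompatibility-preservation step: one must correctly combine the generic witness $u\in G_{i,N}$ (handling the $N$-part of the conditions) with the lifted extension $s$ (handling the $(i\setminus N)$-part) via Lemma 6.4, and verify that the combination both lives in $E_i^N$ and extends $p$ and $q$. Everything else is definition-chasing.
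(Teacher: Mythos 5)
Your proof is correct and follows essentially the same route as the paper's: surjectivity and order preservation are immediate, and compatibility is lifted by amalgamating a preimage of the common extension with a condition from $G_{i,N}$ below the three $N$-restrictions via Lemma 6.4, then invoking Lemma 7.7 to see the amalgam lies in $E_i^N$. The paper's argument is the same up to renaming (your $s,u$ are its $r_0,s$), and your derivation of the density of $E_i^N$ in $\p_i/G_{i,N}$ from Lemmas 7.5(1) and 1.3 matches the remark following Notation 7.6.
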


\begin{proof}
The second statement follows from the first together with the fact 
that $E_i^N$ is dense in $\p_i / G_{i,N}$. 
The function $\pi_i$ is surjective by the definition of $\p_i^N$, 
and clearly satisfies that 
$q \le p$ in $S(i)$ implies that $\pi_i(q) \le \pi_i(p)$ in $S(i)$. 
To see that $\pi_i$ preserves incompatibility, suppose that $p$ and $q$ 
are in $E_i^N$, and $r \le \pi_i(p), \pi_i(q)$ in $\p_i^N$. 
We will show that $p$ and $q$ are compatible in $E_i^N$. 
Fix $r_0 \in E_i^N$ such that 
$r = r_0 \restriction (i \setminus N)$.

Since $r_0$, $p$, and $q$ are in $E_i^N$, it follows that 
$r_0 \restriction N$, $p \restriction N$, and $q \restriction N$ 
are in $G_{i,N}$ by Lemma 7.7. 
As $G_{i,N}$ is a filter, we can fix $s \in G_{i,N}$ with 
$s \le r_0 \restriction N, p \restriction N, q \restriction N$. 
Then $\dom(s) \cap \dom(r_0) = \dom(s)$, and for all 
$\gamma \in \dom(s)$, $r_0(\gamma) \subseteq s(\gamma)$. 
By Lemma 6.4, $r_0 + s$ is a condition in $\p_i$ which is below $r_0$ and $s$. 
Moreover, $(r_0 + s) \restriction N = s \in G_{i,N}$, 
so $r_0 + s \in E_i^N$. 
Now $(r_0 + s) \restriction N = s \le p \restriction N, q \restriction N$, 
and $(r_0 + s) \restriction (i \setminus N) = 
r \le \pi_i(p) = p \restriction (i \setminus N), 
\pi_i(q) = q \restriction (i \setminus N)$. 
So $r_0 + s \le p, q$.
\end{proof}

Recall that for $i \le \lambda$, 
$\p_i \cap N$ is a regular suborder of both $\p_i$ and $\p_\lambda \cap N$, 
by Lemma 7.2 and Lemma 7.5(2). 
So if $G_i$ is a $V$-generic filter on $\p_i$, then 
$G_i \cap N$ is a $V$-generic filter on $\p_i \cap N$. 
Hence we can form the forcing poset $(\p_\lambda \cap N) / (G_i \cap N)$.

\begin{lemma}
Let $i \in N \cap (\lambda+1)$. Then:
\begin{enumerate}
\item $\p_i$ forces that 
$(\p_\lambda \cap N) / (\dot G_i \cap N)$ is equal to 
$(\p_\lambda / \dot G_i) \cap N[\dot G_i]$;
\item $\p_i$ forces that $(\p_\lambda \cap N) / (\dot G_i \cap N)$ is 
$\omega_1$-c.c.\ and has the $\omega_1$-approximation property.
\end{enumerate}
\end{lemma}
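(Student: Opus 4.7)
My plan for part (1) is to use Lemma 1.5 applied to two parallel projections: by Lemma 7.2(1), $p \mapsto p \restriction i$ is a projection mapping from $\p_\lambda \cap N$ onto $\p_i \cap N$ with $q \le q \restriction i$, and by Lemma 6.3(4) the same map is a projection of $\p_\lambda$ onto $\p_i$. The ``two kinds of quotients'' clause of Lemma 1.5 then gives
$$(\p_\lambda \cap N)/(G_i \cap N) = \{q \in \p_\lambda \cap N : q \restriction i \in G_i \cap N\},$$
$$(\p_\lambda / G_i) \cap N = \{q \in \p_\lambda \cap N : q \restriction i \in G_i\},$$
and these coincide because $q \in N$ forces $q \restriction i \in N$. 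It remains to upgrade $\cap N$ to $\cap N[\dot G_i]$; since $\p_\lambda \subseteq V$, this reduces to the standard fact $V \cap N[G_i] = N$. In our setting this follows by elementarity: for a $\p_i$-name $\dot y \in N$ whose interpretation is in $V$, the set of conditions in $\p_i$ forcing $\dot y$ to equal some check-name is dense below any condition forcing $\dot y \in \check V$ and is definable in $N$; Lemma 1.3 (using that $\p_i \cap N$ is regular in $\p_i$ by Lemma 7.5) ensures this set meets $G_i \cap N$, so elementarity produces a decided value in $N$.

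For part (2), I will show that in $V[G_i]$, $(\p_\lambda \cap N)/(G_i \cap N)$ is a regular suborder of $\p_\lambda / G_i$. Corollary 6.10 gives that $\p_\lambda / G_i$ is $\omega_1$-c.c.\ and has the $\omega_1$-approximation property in $V[G_i]$; then Lemma 1.8 and downward preservation of the countable chain condition under regular embeddings yield (2). Using part (1), I view the smaller quotient as the subset $(\p_\lambda/G_i) \cap N[G_i]$ of $\p_\lambda/G_i$. Preservation of incompatibility is automatic: compatibility in either quotient agrees with compatibility in the parent poset by Lemma 1.5, and $\p_\lambda \cap N$ is a regular suborder of $\p_\lambda$ by Lemma 7.5, so all three compatibility notions coincide.

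The real content is predensity of maximal antichains. Given a maximal antichain $A$ of $(\p_\lambda \cap N)/(G_i \cap N)$ and $q \in \p_\lambda / G_i$, by density of $E_\lambda$ (Lemma 7.5(1)) together with Lemma 1.3, I may assume $q \in E_\lambda$, so $q \restriction N \in \p_\lambda \cap N$. A short check using Lemma 1.5, the identity $(q \restriction N) \restriction i = (q \restriction i) \restriction N$, and the fact that $G_i$ is a filter containing $q \restriction i$, shows $q \restriction N \in (\p_\lambda \cap N)/(G_i \cap N)$. By maximality, choose $a \in A$ and $r' \in \p_\lambda \cap N$ with $r' \le q \restriction N, a$ (the amalgamation may be taken in $\p_\lambda \cap N$ since that is where the compatibility occurs). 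Because $\dom(r') \subseteq N$, we have $\dom(q) \cap \dom(r') \subseteq \dom(q \restriction N)$, and on this intersection $r'$ extends $q \restriction N$ (hence extends $q$). So Lemma 6.4 applies to $q$ and $r'$, producing $r := q + r' \in \p_\lambda$ with $r \le q, a$, which gives the compatibility in $\p_\lambda / G_i$.

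The principal obstacle I anticipate is precisely this last amalgamation via Lemma 6.4, which requires care about the behavior of $\dom(q)$ outside of $N$; once past that step, every other part of the proof is routine bookkeeping inside the projection-quotient framework already established in Section 6 and in Lemmas 7.2--7.5.
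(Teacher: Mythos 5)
Your part (1) is essentially the paper's argument: both identities reduce, after observing that the two projection quotients are computed by $q \mapsto q \restriction i$ (Lemmas 7.2 and 6.3(4) via Lemma 1.5), to the fact that $N[G_i] \cap V = N$. One caveat: your justification of that fact via Lemma 1.3 does not work. Lemma 1.3 only shows that $G_i$ meets $D$, which was already known from genericity; it says nothing about $G_i$ meeting $D \cap N$. The correct (standard) argument uses that $\p_i$ is $\omega_1$-c.c.: given $\dot y \in N$ forced into $\check V$, a maximal antichain $A \in N$ of conditions deciding $\dot y$ has size at most $\omega_1$, hence $A \subseteq N$ because $\omega_1 \subseteq N$; since $G_i$ meets $A$ at some $a \in N$, elementarity puts the decided value in $N$. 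The paper cites this as standard without proof.

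For part (2) you take a genuinely different route. The paper never shows that $(\p_\lambda \cap N)/(G_i \cap N)$ is a regular suborder of $\p_\lambda / G_i$; instead it uses Proposition 6.9 to realize $\p_\lambda / G_i$ as densely embedded in a special $(\lambda \setminus i)$-iteration $\p_\lambda'$ chosen, by elementarity, inside $N[G_i]$, and then applies Lemma 7.5(2) in $V[G_i]$ with $N[G_i]$ in place of $N$, so that $\p_\lambda' \cap N[G_i]$ is regular in $\p_\lambda'$ and inherits the chain condition and approximation property. Your direct regularity proof can be made to work (and both routes ultimately rest on Proposition 6.9, yours through Corollary 6.10), but two steps need repair. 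First, the claim that ``compatibility in either quotient agrees with compatibility in the parent poset by Lemma 1.5'' is not what Lemma 1.5 says, and it is false for general projections; to preserve incompatibility you should take a common extension $r$ of $p,q$ in $\p_\lambda / G_i$, refine it into $E_\lambda$ using Lemma 1.3, and check directly that $r \restriction N$ lies in $(\p_\lambda \cap N)/(G_i \cap N)$ and is below both $p$ and $q$ --- the same restriction argument you use for predensity, not a citation. Second, at the end of the predensity argument you must verify that $r = q + r'$ actually lies in $\p_\lambda / G_i$, i.e.\ that $r \restriction i = (q \restriction i) + (r' \restriction i) \in G_i$; this holds because $q \restriction i$ and $r' \restriction i$ both belong to the filter $G_i$, so some $t \in G_i$ extends both and hence extends $r \restriction i$, which is then in $G_i$ by upward closure. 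With these repairs your argument is a valid alternative to the paper's.
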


\begin{proof}
(1) Let $G_i$ be a $V$-generic filter on $\p_i$. 
We will show that 
$$
(\p_\lambda \cap N) / (G_i \cap N) = 
(\p_\lambda / G_i) \cap N[G_i].
$$
Let $p \in (\p_\lambda \cap N) / (G_i \cap N)$. 
Then by the definition of the quotient forcing, 
$p \in \p_\lambda \cap N$ and 
$p \restriction i \in G_i \cap N$. 
In particular, $p \restriction i \in G_i$. 
So $p \in \p_\lambda / G_i$. 
Also, since $p \in N$ and $N \subseteq N[G_i]$, it follows that 
$p \in N[G_i]$. 
Hence $p \in (\p / G_i) \cap N[G_i]$.

Conversely, let $p \in (\p_\lambda / G_i) \cap N[G_i]$. 
By the definition of the quotient forcing, 
$p \in \p_\lambda$ and $p \restriction i \in G_i$. 
Since $\p_i$ is $\omega_1$-c.c., $N[G_i] \cap V = N$ by standard arguments. 
Therefore $p \in N[G_i] \cap \p_\lambda = N \cap \p_\lambda$. 
So $p \in \p_\lambda \cap N$. 
Since $p$ and $i$ are in $N$, $p \restriction i \in N$. 
So $p \restriction i \in G_i \cap N$. 
Hence $p \in (\p_\lambda \cap N) / (G_i \cap N)$.

(2) By Proposition 6.9, there exists in $V[G_i]$ a special 
$(\lambda \setminus i)$-iteration $\p_\lambda'$ such that the map 
$f(p) = p \restriction (\lambda \setminus i)$ is a dense embedding 
of $\p_\lambda / G_i$ into $\p_\lambda'$. 
By the elementarity of $N[G_i]$, we can choose $\p_\lambda'$ 
to be in $N[G_i]$. 
Again by elementarity, it is easy to check that $f \restriction N[G_i]$ 
is a dense embedding of $(\p_\lambda / G_i) \cap N[G_i]$ 
into $\p_\lambda' \cap N[G_i]$. 
Hence $(\p_\lambda / G_i) \cap N[G_i]$ is forcing equivalent 
to $\p_\lambda' \cap N[G_i]$. 
By (1), it follows that $(\p_\lambda \cap N) / (G_i \cap N)$ 
is forcing equivalent to $\p_\lambda' \cap N[G_i]$.

By Lemma 7.5(2) applied in $V[G_i]$ to $\p_\lambda'$ and $N[G_i]$, 
$\p_\lambda' \cap N[G_i]$ is a regular suborder of 
$\p_\lambda'$. 
But $\p_\lambda'$ is a special $(\lambda \setminus i)$-iteration, and 
hence is $\omega_1$-c.c.\ and has the $\omega_1$-approximation property. 
Since $\p_\lambda' \cap N[G_i]$ is a regular suborder of 
$\p_\lambda'$, it is also $\omega_1$-c.c., and by Lemma 1.8, it has 
the $\omega_1$-approximation property. 
Hence $(\p_\lambda \cap N) / (G_i \cap N)$ is $\omega_1$-c.c.\ and 
has the $\omega_1$-approximation property.
\end{proof}

\begin{lemma}
Let $i \le \lambda$. 
Then $\p_i$ forces that 
$(\p_\lambda \cap N) / (\dot G_i \cap N)$ has the $\omega_1$-approximation property.
\end{lemma}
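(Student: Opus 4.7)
The case $i \in N$ is exactly Lemma 7.12, so I will assume $i \notin N$ and set $i_N := \min((N \cap (\lambda+1)) \setminus i)$. The plan is to lift the conclusion of Lemma 7.12 at the index $i_N$ back down to $i$ by combining Corollary 6.10 with the composition and restriction principles for the $\omega_1$-approximation property from Section 1.

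The first step is the identity $\p_i \cap N = \p_{i_N} \cap N$. The nontrivial inclusion uses that any $q \in \p_{i_N} \cap N$ has $\dom(q) \subseteq N \cap i_N$, which is contained in $i$ by minimality of $i_N$. An immediate consequence is that whenever $G_i$ is $V$-generic on $\p_i$ and $G_{i_N}$ is any $V$-generic extension of $G_i$ to $\p_{i_N}$, the equality $G_i \cap N = G_{i_N} \cap N$ holds, so the quotient $(\p_\lambda \cap N)/(G_i \cap N)$ appearing in the statement is literally the same poset as $(\p_\lambda \cap N)/(G_{i_N} \cap N)$.

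Fix such $G_i$, and let $H$ be $V[G_i]$-generic on $(\p_\lambda \cap N)/(G_i \cap N)$. In an outer universe I will choose $G_{i_N} \supseteq G_i$ so that $G_{i_N}/G_i$ is $V[G_i][H]$-generic on $\p_{i_N}/G_i$; by mutual genericity, $H$ then remains $V[G_{i_N}]$-generic on $(\p_\lambda \cap N)/(G_{i_N} \cap N)$. Corollary 6.10 gives that $\p_{i_N}/G_i$ is $\omega_1$-c.c.\ and has the $\omega_1$-approximation property in $V[G_i]$, so $(V[G_i], V[G_{i_N}])$ has both the $\omega_1$-covering and $\omega_1$-approximation properties. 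Lemma 7.12, applied at the index $i_N \in N$, supplies the same for the pair $(V[G_{i_N}], V[G_{i_N}][H])$.

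Lemma 1.7 then composes these to yield the $\omega_1$-covering and $\omega_1$-approximation properties for the pair $(V[G_i], V[G_{i_N}][H])$. Since $V[G_i] \subseteq V[G_i][H] \subseteq V[G_{i_N}][H]$, Lemma 1.8 restricts the $\omega_1$-approximation property to the intermediate pair $(V[G_i], V[G_i][H])$, which is exactly what the lemma asserts. The only slightly delicate point is arranging $G_{i_N}/G_i$ to be generic over $V[G_i][H]$ in an outer universe, so that $H$ survives as a generic object for the quotient over the larger model $V[G_{i_N}]$; once that three-extension diagram is in place, the rest is bookkeeping with Lemmas 1.7 and 1.8.
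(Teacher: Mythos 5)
Your argument is essentially identical to the paper's own proof: both pass to $i_N = \min((N\cap(\lambda+1))\setminus i)$, use $\p_i \cap N = \p_{i_N}\cap N$ to identify the two quotients, choose $G_{i_N}/G_i$ generic over $V[G_i][H]$ so that $H$ survives as a generic over $V[G_{i_N}]$, and then compose via Lemma 1.7 and restrict via Lemma 1.8, using Corollary 6.10 for the pair $(V[G_i],V[G_{i_N}])$ and the preceding lemma (Lemma 7.13 in the paper's numbering, not 7.12) for the pair $(V[G_{i_N}],V[G_{i_N}][H])$. The proof is correct as stated.
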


\begin{proof}
Let $G_i$ be a $V$-generic filter on $\p_i$, and let 
$H$ be a $V[G_i]$-generic filter on $(\p_\lambda \cap N) / (G_i \cap N)$. 
We will show that the pair $(V[G_i],V[G_i][H])$ has the 
$\omega_1$-approximation property. 

Let $j := \min((N \cap (\lambda+1)) \setminus i)$. 
Let $G_{i,j}$ be a $V[G_i][H]$-generic filter on $\p_j / G_i$. 
By the product lemma and Lemma 1.1, 
$$
V[G_i][H][G_{i,j}] = 
V[G_i][G_{i,j}][H] = V[G_{i,j}][H],
$$
and $G_{i,j}$ is a $V$-generic filter on $\p_j$ such that 
$G_{i,j} \cap \p_i = G_i$. 

We claim that $G_i \cap N = G_{i,j} \cap N$. 
Since $G_i \subseteq G_{i,j}$, the forward inclusion is immediate. 
Conversely, let $p \in G_{i,j} \cap N$, and we will show 
that $p \in G_i$. 
Then $p \in \p_j \cap N$. 
By elementarity, $\dom(p) \subseteq N \cap j \subseteq i$, so 
$p \in G_{i,j} \cap \p_i = G_i$. 
It follows from the claim that 
$(\p_\lambda \cap N) / (G_i \cap N) = 
(\p_\lambda \cap N) / (G_{i,j} \cap N)$.

Since $j \in N$, by Lemma 7.13 it follows that 
$(\p_\lambda \cap N) / (G_{i,j} \cap N)$ has the 
$\omega_1$-c.c.\ and the $\omega_1$-approximation property in $V[G_{i,j}]$. 
Therefore the pair 
$$
(V[G_{i,j}],V[G_{i,j}][H])
$$
has the 
$\omega_1$-covering property and 
the $\omega_1$-approximation property. 
That is, the pair 
$$
(V[G_i][G_{i,j}],V[G_i][G_{i,j}][H])
$$ 
has the $\omega_1$-covering property and the $\omega_1$-approximation property. 
By Corollary 6.10, the forcing poset $\p_j / G_i$ is $\omega_1$-c.c.\ 
and has the $\omega_1$-approximation property in $V[G_i]$. 
So the pair 
$$
(V[G_i],V[G_i][G_{i,j}])
$$
has the $\omega_1$-covering property and the 
$\omega_1$-approximation property. 
By Lemma 1.7, it follows that the pair 
$$
(V[G_i],V[G_i][G_{i,j}][H])
$$
has the $\omega_1$-approximation property. 
Since $V[G_i] \subseteq V[G_i][H] \subseteq V[G_i][G_{i,j}][H]$, 
by Lemma 1.8 the pair 
$$
(V[G_i],V[G_i][H])
$$
has the $\omega_1$-approximation property.
\end{proof}

For the remainder of the section, fix a $V$-generic filter 
$G_N$ on $\p_\lambda \cap N$. 
Our goal is to prove that in $V[G_N]$, $\p_\lambda / G_N$ 
is forcing equivalent to a special $(\lambda \setminus N)$-iteration. 
For $i \le \lambda$, let $G_{i,N} := G_N \cap \p_i$. 
Recall that $E_i^N$ is dense in $\p_i / G_{i,N}$, and 
$\pi_i : E_i^N \to \p_i^N$ is a dense embedding, 
where $\pi_i(p) := p \restriction (i \setminus N)$ for 
all $p \in E_i^N$. 
So it suffices to show that in $V[G_N]$, 
$\p_i^N$ is forcing equivalent to a 
special $(i \setminus N)$-iteration, for all $i \le \lambda$.

\begin{notation}
In $V[G_{i,N}]$, 
let $\dot G_i$ be a $\p_i^N$-name for the $V[G_{i,N}]$-generic filter 
on $\p_i / G_{i,N}$ generated by $\pi_i^{-1}(\dot G_{\p_i^N})$. 
Also in $V[G_{i,N}]$, let $\dot T_i^N$ be a $\p_i^N$-name for 
$(\dot T_i)^{\dot G_i}$.
\end{notation}

\begin{lemma}
The forcing poset $\p_i^N$ forces over $V[G_N]$ that 
$\dot T_i^N$ is a tree with underlying set $\omega_1$ 
which has no branches of length $\omega_1$.
\end{lemma}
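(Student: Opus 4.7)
\emph{Proof proposal.} The plan is: let $H$ be a $V[G_N]$-generic filter on $\p_i^N$, and verify the three properties in $V[G_N][H]$. Via the dense embedding $\pi_i$ of Lemma 7.12, $\pi_i^{-1}[H]$ generates a $V[G_{i,N}]$-generic filter on $\p_i / G_{i,N}$, which together with $G_{i,N}$ produces a $V$-generic filter $G_i$ on $\p_i$ with $G_i \cap N = G_{i,N}$ and $\dot T_i^N[H] = \dot T_i[G_i] =: T$. Clause (2) of Definition 6.2 makes $T$ a tree with underlying set $\omega_1$ having no $\omega_1$-branch in $V[G_i]$. Since all forcings involved preserve $\omega_1$ (by the $\omega_1$-c.c.\ of $\p_\lambda$ from Corollary 6.6, which passes to regular suborders and to quotients), the tree structure of $T$ with underlying set $\omega_1$ transfers to $V[G_N][H]$; only the absence of new $\omega_1$-branches needs to be verified.

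For this it suffices, by Lemma 1.9, to show that the pair $(V[G_i], V[G_N][H])$ has the $\omega_1$-approximation property, since then any $\omega_1$-branch of $T$ present in $V[G_N][H]$ would already lie in $V[G_i]$, where none exist. By Lemma 7.2, $\p_i \cap N$ is a regular suborder of $\p_\lambda \cap N$, so by Lemma 1.1, $G_N$ may be regarded as a $V[G_{i,N}]$-generic filter on $(\p_\lambda \cap N)/G_{i,N}$. Now apply the product lemma over the ground model $V[G_{i,N}]$ to the two forcings $(\p_\lambda \cap N)/G_{i,N}$ and $\p_i / G_{i,N}$: since $H$ is $V[G_N]$-generic for $\p_i / G_{i,N}$, the pair $(G_N, H)$ is $V[G_{i,N}]$-generic for the product, and by symmetry $G_N$ is $V[G_i]$-generic for $(\p_\lambda \cap N)/G_{i,N}$ with $V[G_N][H] = V[G_i][G_N]$. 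Lemma 7.14 applied to the filter $G_i$ (noting $G_i \cap N = G_{i,N}$) then guarantees that $(\p_\lambda \cap N)/(G_i \cap N) = (\p_\lambda \cap N)/G_{i,N}$ has the $\omega_1$-approximation property over $V[G_i]$, yielding the required property of $(V[G_i], V[G_N][H])$.

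The main obstacle is the product-lemma swap converting the $V[G_N]$-genericity of $H$ into $V[G_i]$-genericity of $G_N$ for $(\p_\lambda \cap N)/G_{i,N}$; this uses essentially that $H$ is generic over the larger model $V[G_N]$ and not merely over $V[G_{i,N}]$, which provides precisely the mutual-genericity input required to reverse the order of the two quotient forcings.
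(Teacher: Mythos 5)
Your proposal is correct and follows essentially the same route as the paper's proof: pull $H$ back through the dense embedding $\pi_i$ to a generic filter $G_i$ on $\p_i / G_{i,N}$, use the product lemma over $V[G_{i,N}]$ to swap the order of the two quotients so that $G_N$ becomes a $V[G_i]$-generic filter on $(\p_\lambda \cap N)/(G_i \cap N)$, and then invoke Lemma 7.14 together with the branch-preservation consequence of the $\omega_1$-approximation property. The only blemish is the citation ``Lemma 1.9,'' which does not exist; the fact you need there is Lemma 1.6.
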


\begin{proof}
Let $G_i^N$ be a $V[G_N]$-generic filter on $\p_i^N$. 
We will show that in $V[G_N][G_i^N]$, $(\dot T_i^N)^{G_i^N}$ 
is a tree with underlying set $\omega_1$ which has no branches 
of length $\omega_1$. 
Let $G_i$ denote the filter on $\p_i / G_{i,N}$ generated by 
$\pi_i^{-1}(G_i^N)$. 
Since $\pi_i$ is a dense embedding and $E_i^N$ is dense in $\p_i / G_{i,N}$, 
$G_i$ is a $V[G_N]$-generic filter on $\p_i / G_{i,N}$. 
Moreover, 
$$
V[G_N][G_i^N] = V[G_N][G_i].
$$

We claim that $G_i \cap N = G_{i,N}$ and 
$V[G_{i,N}][G_i] = V[G_i]$. 
Since $G_i$ is a $V[G_N]$-generic filter on $\p_i / G_{i,N}$, it is also 
a $V[G_{i,N}]$-generic filter on $\p_i / G_{i,N}$. 
By Lemma 1.1, $G_i$ is a $V$-generic filter on $\p_i$, 
$G_i \cap (\p_i \cap N) = G_{i,N}$, and 
$V[G_{i,N}][G_i] = V[G_i]$. 
But $G_i \cap (\p_i \cap N) = G_i \cap N$, proving the claim.

Note that both of the forcing posets $\p_i / G_{i,N}$ and 
$(\p_\lambda \cap N) / G_{i,N}$ are in $V[G_{i,N}]$. 
By Lemma 1.1,
$$
V[G_N] = V[G_{i,N}][G_N],
$$
and $G_N$ is a $V[G_{i,N}]$-generic filter on 
$(\p_\lambda \cap N) / G_{i,N}$. 
So $G_i$ is a $V[G_{i,N}][G_N]$-generic filter on $\p_i / G_{i,N}$. 
By the product lemma,
$$
V[G_N][G_i] = 
V[G_{i,N}][G_N][G_i] = 
V[G_{i,N}][G_i][G_N],
$$
and $G_N$ is a $V[G_{i,N}][G_i]$-generic filter 
on $(\p_\lambda \cap N) / G_{i,N}$.

By the above claim, $G_{i,N} = G_i \cap N$ 
and $V[G_{i,N}][G_i] = V[G_i]$. 
So 
$$
V[G_N][G_i] = V[G_{i,N}][G_i][G_N] = V[G_i][G_N],
$$
and $G_N$ is a $V[G_i]$-generic filter on $(\p_\lambda \cap N) / (G_i \cap N)$. 
By Lemma 7.14, 
it follows that in $V[G_i]$, $(\p_\lambda \cap N) / (G_i \cap N)$ 
has the $\omega_1$-approximation property. 
Hence the pair $(V[G_i],V[G_i][G_N])$ has the $\omega_1$-approximation 
property.

Recall that $\dot T_i$ is a $\p_i$-name for a tree with underlying set $\omega_1$ 
which has no branches of length $\omega_1$. 
Let $T_i := \dot T_i^{G_i}$. 
Then in $V[G_i]$, $T_i$ is a tree with underlying set $\omega_1$ 
which has no branches of length $\omega_1$. 
By upwards absoluteness, 
in $V[G_N][G_i^N] = V[G_N][G_i] = V[G_i][G_N]$, 
$T_i$ is a tree with underlying set $\omega_1$. 
Since the pair $(V[G_i],V[G_i][G_N])$ has the $\omega_1$-approximation 
property, $T_i$ has no branches of length $\omega_1$ in $V[G_i][G_N]$. 
By Notation 7.15, $(\dot T_i^N)^{G_i^N}$ is equal to 
$\dot T_i^{G_i} = T_i$, and this is a tree with underlying set $\omega_1$ with 
no branches of length $\omega_1$ in $V[G_i][G_N] = V[G_N][G_i] = V[G_N][G_i^N]$. 
\end{proof}

Let us return to proving that in $V[G_N]$, 
$\p_i^N$ is forcing equivalent to a special $(i \setminus N)$-iteration, 
for all $i \le \lambda$. 
In the model $V[G_N]$, consider the sequence 
$$
\langle \p_i' : i \le \lambda \rangle
$$
which is the special $(\lambda \setminus N)$-iteration defined from the 
sequence $\langle \dot T_i^N : i \in \lambda \setminus N \rangle$. 
In other words, the sequence is defined inductively using (3)--(6) of Definition 6.2. 
Of course, this only makes sense if for each $i \in \lambda \setminus N$, 
the name $\dot T_i^N$ from Notation 7.15 
is a $\p_i'$-name for a tree with underlying set $\omega_1$ 
which has no branches of length $\omega_1$.

We will prove inductively that for each $i \le \lambda$, 
$\p_i^N$ is a dense subset of $\p_i'$. 
Then for all $i \in \lambda \setminus N$, 
$\dot T_i^N$ literally is a $\p_i'$-name. 
And since $\p_i^N$ forces over $V[G_N]$ that 
$\dot T_i^N$ is a tree with underlying set $\omega_1$ which has  
no branches of length $\omega_1$ by Lemma 7.16, so does $\p_i'$. 
So in the end, we have that 
$\p_\lambda^N$ is forcing equivalent to $\p_\lambda'$, which is a 
special $(\lambda \setminus N)$-iteration, completing the proof of Theorem 7.1.

It remains to prove the following lemma.

\begin{lemma}
For all $i \le \lambda$, $\p_i^N$ is a dense subset of $\p_i'$.
\end{lemma}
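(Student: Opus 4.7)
The plan is induction on $i \le \lambda$. The base case $i = 0$ is immediate since both $\p_0^N$ and $\p_0'$ are $\{\emptyset\}$, and the limit case reduces to an earlier successor stage because conditions have finite support. The successor step $i = j+1$ with $j \in N$ is also easy: $\p_{j+1}' = \p_j'$ because $j \notin \lambda \setminus N$, and $\p_{j+1}^N = \p_j^N$ because $(j+1) \setminus N = j \setminus N$, so the inductive hypothesis transfers.

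The real work is the successor step with $j \in \lambda \setminus N$. To see $\p_{j+1}^N \subseteq \p_{j+1}'$, take $p \in E_{j+1}^N$: by Lemma 7.8 and induction, $\pi_j(p \restriction j) \in \p_j^N \subseteq \p_j'$, and when $j \in \dom(p)$, one needs $\pi_j(p \restriction j) \Vdash_{\p_j'}^{V[G_N]} p(j) \in P(\dot T_j^N)$. This follows from $p \restriction j \Vdash^V_{\p_j} p(j) \in P(\dot T_j)$ together with Notation 7.15: any $V[G_N]$-generic $G_j^N$ on $\p_j^N$ extending $\pi_j(p \restriction j)$ gives rise (via the dense embedding $\pi_j^{-1}$ and Lemma 1.1) to a $V$-generic $G_j'$ on $\p_j$ containing $p \restriction j$, and $(\dot T_j^N)^{G_j^N} = \dot T_j^{G_j'}$, so $p(j) \in P((\dot T_j^N)^{G_j^N})$ holds in $V[G_j'] \subseteq V[G_N][G_j^N]$. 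For density, given $r \in \p_{j+1}'$ with $j \in \dom(r)$, set $x := r(j)$, use induction to find $q_0 \in \p_j^N$ with $q_0 \le r \restriction j$, and fix $p_0 \in E_j^N$ with $\pi_j(p_0) = q_0$. I reduce to producing $p_0' \in E_j^N$ with $\pi_j(p_0') \le q_0$ and $p_0' \Vdash^V_{\p_j} x \in P(\dot T_j)$: then $p_0' \cup \{(j,x)\}$ lies in $E_{j+1}^N$ and its $\pi_{j+1}$-image is below $r$.

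The main obstacle is the discrepancy between forcing relations: the hypothesis $r \in \p_{j+1}'$ only yields $q_0 \Vdash_{\p_j^N}^{V[G_N]} x \in P(\dot T_j^N)$, while the target $p_0' \Vdash^V_{\p_j} x \in P(\dot T_j)$ is a forcing statement over $V$. To bridge this, I carry out a Boolean-style mixing argument in $V$. Abbreviate $\phi$ for ``$x \in P(\dot T_j)$''; set $A_+ := \{p \le p_0 : p \in E_j, p \Vdash^V_{\p_j} \phi\}$ and $A_- := \{p \le p_0 : p \in E_j, p \Vdash^V_{\p_j} \neg \phi\}$, which are jointly dense below $p_0$ (density of $E_j$ in $\p_j$ combined with the forcing dichotomy). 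Let $B_\pm := \{p \restriction N : p \in A_\pm\} \subseteq \p_j \cap N$. Using Lemma 6.4 (the sum $p_0 + s$ lies in $\p_j$ and restricts to $s$ on $N$) together with the refined density-of-$E_j$ argument from Lemma 7.5 (extensions in $E_j$ can be arranged so that the $N$-restriction extends any prescribed $s \le p_0 \restriction N$), one verifies that $B_+ \cup B_-$ is dense below $p_0 \restriction N$ in $\p_j \cap N$, so $G_{j,N}$ meets $B_+ \cup B_-$. The possibility that $G_{j,N}$ meets $B_-$ leads to a contradiction with $q_0 \Vdash^{V[G_N]}_{\p_j^N} \phi$: a witnessing $p_0' \in A_-$ has $p_0' \restriction N \in G_{j,N}$, hence $p_0' \in E_j^N$, so for any $V[G_N]$-generic $G_j^N \ni \pi_j(p_0')$ the corresponding $V$-generic $G_j' \ni p_0'$ on $\p_j$ satisfies $\neg \phi$ in $V[G_j']$, and hence, by absoluteness of $\phi$ between $V[G_j']$ and $V[G_N][G_j^N]$, also $\neg\phi$ in $V[G_N][G_j^N]$, contradicting $q_0 \in G_j^N$. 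So $G_{j,N}$ meets $B_+$, yielding the required $p_0'$.
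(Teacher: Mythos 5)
Your proof is correct, and its skeleton --- the induction, the trivial base/limit/$j \in N$ cases, and the reduction of the successor step with $j \notin N$ to producing $p_0' \in E_j^N$ with $\pi_j(p_0') \le q_0$ and $p_0' \Vdash^V_{\p_j} x \in P(\dot T_j)$ --- matches the paper's. But you handle the crux, namely bridging the forcing relation over $V[G_N]$ with the one over $V$, by a genuinely different mechanism. The paper argues ``positively'': it first steps $q_0 \Vdash^{V[G_N]}_{\p_j^N} x \in P(\dot T_j^N)$ down to $V[G_{j,N}]$ (Claim 7.18), picks $s \in G_{j,N}$ forcing this statement over $V$, amalgamates $s$ with the $\pi_j$-preimage of $q_0$ to get $r_0 \in E_j^N$, and then proves directly (Claim 7.19) that $r_0 \Vdash^V_{\p_j} x \in P(\dot T_j)$; that last claim is the delicate one, since it requires re-running the whole $E_j^N$/$\p_j^N$/$\pi_j$ apparatus relative to an arbitrary generic $H_{j,N}$ rather than the fixed $G_{j,N}$. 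You instead decide $\phi$ densely in $V$ below $p_0$ within $E_j$, project to $\p_j \cap N$, and let genericity of $G_{j,N}$ hand you a condition deciding $\phi$ whose $N$-part lies in $G_{j,N}$; the negative alternative is then excluded by the same two-generics absoluteness contradiction that the paper uses inside Claim 7.18. This buys a cleaner argument --- you only ever need the ``contradiction'' direction of the correspondence between $V$-generic filters on $\p_j$ and $V[G_N]$-generic filters on $\p_j^N$, and Claim 7.19 disappears --- at the price of the density verification for $B_+ \cup B_-$ below $p_0 \restriction N$, which does go through but needs the refinement of Lemma 7.5's argument that you correctly flag (extend any $p' \le p_0 + s$ into $E_j$ via $p' + q$ for a suitable $q \in \p_j \cap N$ containing $p' \restriction N$, so that the $N$-restriction of the result extends the prescribed $s$). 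A further point in your favor: you explicitly verify the inclusion $\p_{j+1}^N \subseteq \p_{j+1}'$ at the nontrivial successor step, which the paper's proof leaves implicit even though the statement asserts ``dense subset.''
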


\begin{proof}
This is trivial for $i = 0$ by Definition 6.2(3), 
and it follows easily from the inductive hypothesis 
when $i$ is a limit ordinal by Definition 6.2(6). 

Let $i < \lambda$, and suppose that $\p_i^N$ is a dense subset of $\p_i'$. 
We will prove that $\p_{i+1}^N$ is a dense subset of $\p_{i+1}'$. 
First, assume the easier case that $i \in N$. 
Then $\p_{i+1}' = \p_i'$ by Definition 6.2(5). 
So it suffices to show that $\p_{i+1}^N = \p_{i}^N$. 
But $\p_i^N \subseteq \p_{i+1}^N$ by Lemma 7.10(1). 
Conversely, let $p \in \p_{i+1}^N$, and we will show that $p \in \p_i^N$. 
Then for some $p_0 \in E_{i+1}^N$, 
$p = p_0 \restriction ((i+1) \setminus N)$. 
But $i \in N$, so $(i+1) \setminus N = i \setminus N$. 
Hence $p = p_0 \restriction ((i+1) \setminus N) = 
p_0 \restriction (i \setminus N) = 
(p_0 \restriction i) \restriction (i \setminus N)$. 
But $p_0 \restriction i \in E_i^N$ by Lemma 7.8(2), 
so $p \in \p_i^N$.

Secondly, assume that $i \notin N$. 
We will show that $\p_{i+1}^N$ is dense in $\p_{i+1}'$. 
Let $p \in \p_{i+1}'$, and we will find a condition in $\p_{i+1}^N$ 
which is below $p$. 
If $i \notin \dom(p)$, then $p = p \restriction i \in \p_i'$. 
By the inductive hypothesis, there is $q \le p$ in $\p_i^N$. 
Then $q \in \p_{i+1}^N$ by Lemma 7.10(1), 
and $q \le p$, so we are done.

Suppose that $i \in \dom(p)$. 
Then by the definition of $\p_{i+1}'$, 
$p \restriction i \in \p_i'$, and 
$$
p \restriction i \Vdash^{V[G_N]}_{\p_i'} p(i) \in P(\dot T_i^N).
$$
Let $x = p(i)$. 
By the inductive hypothesis, we can fix $q \le p \restriction i$ in $\p_i^N$. 
Then 
$$
q \Vdash^{V[G_N]}_{\p_i'} x \in P(\dot T_i^N).
$$
Since $\p_i'$ is a dense suborder of $\p_i^N$ and $\dot T_i^N$ is 
a $\p_i^N$-name, we have that 
$$
q \Vdash^{V[G_N]}_{\p_i^N} x \in P(\dot T_i^N).
$$

\begin{claim}
$q \Vdash^{V[G_{i,N}]}_{\p_i^N} x \in P(\dot T_i^N)$.
\end{claim}

\begin{proof}
If not, then there is $r \le q$ in $\p_i^N$ such that 
$$
r \Vdash^{V[G_{i,N}]}_{\p_i^N} x \notin P(\dot T_i^N).
$$
Let $K$ be a $V[G_N]$-generic filter on $\p_i'$ with $r \in K$, and 
let $T := (\dot T_i^N)^K$. 
Since $r \le p \restriction i$, $p \restriction i \in K$. 
As $p \restriction i \in K$, in $V[G_N][K]$ we have that $p(i) = x \in P(T)$. 
On the other hand, $K \cap \p_i^N$ is a $V[G_{i,N}]$-generic filter on 
$\p_i^N$, and $T = (\dot T_i^N)^{K \cap \p_i^N}$. 
Since $r \in K \cap \p_i^N$, in $V[G_{i,N}][K \cap \p_i^N]$ 
we have that $x \notin P(T)$. 
But $P(T)$ is the same in $V[G_{i,N}][K \cap \p_i^N]$ and 
$V[G_N][K]$, which is a contradiction.
\end{proof}

Since $V[G_{i,N}]$ models that $q$ forces in $\p_i^N$ that $x \in P(\dot T_i^N)$, 
we can fix $s \in G_{i,N}$ such that 
$$
s \Vdash^V_{\p_i \cap N} q \Vdash^{V[\dot G_{i,N}]}_{\p_i^N} x \in 
P(\dot T_i^N).
$$
As $q \in \p_i^N$, fix $q_0 \in E_i^N$ such that 
$q = q_0 \restriction (i \setminus N)$. 
Then $q_0 \in \p_i / G_{i,N}$.  
By Lemma 1.4, $q_0$ and $s$ are compatible in $\p_i / G_{i,N}$, so fix 
$r_0 \le q_0, s$ in $\p_i / G_{i,N}$. 
By extending $r_0$ if necessary, we may assume that $r_0 \in E_i^N$. 
By Lemma 7.7, $r_0 \restriction N \in G_{i,N}$, and since $s \in N$ and $r_0 \le s$, 
$r_0 \restriction N \le s$.

\begin{claim}
$r_0 \Vdash^V_{\p_i} x \in P(\dot T_i)$.
\end{claim}

\begin{proof}
To prove this, let $H_i$ be a $V$-generic filter on $\p_i$ with $r_0 \in H_i$. 
We will show that $x \in P((\dot T_i)^{H_i})$. 
Let $H_{i,N} := H_i \cap N$, which is a $V$-generic filter on $\p_i \cap N$ 
by Lemma 7.5(2). 
Since $r_0 \le s$ and $s \in \p_i \cap N$, $s \in H_i \cap N = H_{i,N}$. 
By Lemma 1.1, 
$V[H_i] = V[H_{i,N}][H_i]$, and $H_i$ is a $V[H_{i,N}]$-generic 
filter on $\p_i / H_{i,N}$. 
Therefore $\pi_i[H_i \cap E_i^N]$ 
generates a $V[H_{i,N}]$-generic filter $H_i^N$ on 
$\p_i^N$, where $E_i^N$ and 
$\p_i^N$ are defined in $V[H_{i,N}]$ from $H_{i,N}$, 
instead of from $G_{i,N}$ as above. 
Moreover, $V[H_i] = V[H_{i,N}][H_i] = V[H_{i,N}][H_i^N]$. 
Since $r_0 \in H_i$, $r_0 \restriction (i \setminus N) \in H_i^N$. 
Now $r_0 \le q_0$, so $r_0 \restriction (i \setminus N) \le 
q_0 \restriction (i \setminus N) = q$. 
Hence $q \in H_i^N$. 
Let $T := (\dot T_i^N)^{H_i^N}$. 

Recall that 
$$
s \Vdash^V_{\p_i \cap N} q \Vdash^{V[\dot G_{i,N}]}_{\p_i^N} x \in 
P(\dot T_i^N).
$$
Since $s \in H_{i,N}$ and $q \in H_i^N$, it follows that $x \in P(T)$. 
By Notation 7.15, $T$ is equal to $\dot T_i^{L}$, where 
$L$ is the filter on $\p_i / H_{i,N}$ generated by $\pi_i^{-1}(H_i^N)$. 
By the definition of $H_i^N$, we have that 
$\pi_i[H_i \cap E_i^N] \subseteq H_i^N$, and therefore 
$H_i \cap E_i^N \subseteq \pi_i^{-1}(H_i^N) \subseteq L$. 
Since clearly $H_i \cap E_i^N$ generates the filter $H_i$, 
$H_i \subseteq L$. 
As $H_i$ and $L$ are both $V[H_{i,N}]$-generic filters 
on $\p_i / H_{i,N}$, it follows that $H_i = L$. 
So $T = (\dot T_i)^{H_i}$. 
It follows that in $V[H_i]$, $x$ is in $P((\dot T_i)^{H_i})$, which 
proves the claim.
\end{proof}

Since $r_0 \in \p_i$ and $r_0 \Vdash^V_{\p_i} x \in P(\dot T_i)$, by the 
definition of $\p_{i+1}$ it follows 
that $r_1 := r_0 \cup \{ (i,x) \}$ is in $\p_{i+1}$. 
Moreover, since $i \notin N$ and $r_0 \in E_i^N$, 
$$
r_1 \restriction N = r_0 \restriction N \in G_{i,N} \subseteq G_{i+1,N}.
$$
So $r_1 \in E_{i+1}^N$. 
Hence $r := r_1 \restriction ((i+1) \setminus N)$ is in $\p_{i+1}^N$. 
Since $r_0 \le q_0$, 
$$
r \restriction i = r_0 \restriction (i \setminus N) 
\le q_0 \restriction (i \setminus N) = q,
$$
and so 
$r \restriction i \le q \le p \restriction i$. 
Since $p(i) = x$, $r = (r \restriction i) \cup \{ (i,x) \} \le p$.
\end{proof}

\section{$\mathsf{IGMP}$ and the Continuum}

We now construct a model in which $\textsf{IGMP}$ holds and 
$2^\omega > \omega_2$. 
We begin with a ground model $V$ which satisfies $\textsf{GCH}$, 
in which $\kappa$ is a supercompact 
cardinal, and $\lambda \ge \kappa$ is a cardinal with cofinality at least 
$\omega_2$. 
We will define a forcing poset of the form 
$$
(\p \times \textrm{Add}(\omega,\lambda)) * 
\dot \q
$$
which forces that $\kappa = \omega_2$, $2^\omega = \lambda$, and 
$\textsf{IGMP}$.

Let $\p$ be the strongly proper collapse of $\kappa$ to become $\omega_2$ 
which was discussed in Section 5.  
Then $\p$ is strongly proper, $\kappa$-c.c., has size $\kappa$, 
and collapses $\kappa$ to become $\omega_2$. 
Since $\textsf{GCH}$ holds in the ground model, 
standard arguments show that $\p$ forces that 
$2^\omega$ and $2^{\omega_1}$ are bounded by $\kappa$ 
(in fact, they are both equal to $\kappa$).

Consider the product forcing $\p \times \textrm{Add}(\omega,\lambda)$. 
Note that the forcing poset 
$\textrm{Add}(\omega,\lambda)$ is the same in $V$ and $V^\p$. 
Since $\p$ forces that $\textrm{Add}(\omega,\lambda)$ is 
$\omega_1$-c.c., it follows that 
$\p \times \textrm{Add}(\omega,\lambda)$ is $\kappa$-c.c., and 
it has size $\lambda$. 
So by standard arguments, 
$\p \times \textrm{Add}(\omega,\lambda)$ 
forces that $2^\omega = 2^{\omega_1} = \lambda$.

Applying Corollary 6.8, 
let $\dot \q$ be a $\p \times \textrm{Add}(\omega,\lambda)$-name 
for a special iteration of length $\lambda$ 
which forces that every tree of height and size $\omega_1$ which has 
no branches of length $\omega_1$ is special. 
Then $\p \times \textrm{Add}(\omega,\lambda)$ forces that 
$\dot \q$ is $\omega_1$-c.c., and has size $\lambda$. 
So the forcing poset 
$$
(\p \times \textrm{Add}(\omega,\lambda)) * \dot \q
$$
is $\kappa$-c.c., and forces that 
$2^\omega = 2^{\omega_1} = \lambda$.

By Corollary 4.5, in order to prove that 
$(\p \times \textrm{Add}(\omega,\lambda)) * \dot \q$ forces $\textsf{IGMP}$, 
it suffices to show that 
$(\p \times \textrm{Add}(\omega,\lambda)) * \dot \q$ 
forces that for all 
sufficiently large regular cardinals $\theta \ge \omega_2$, 
there are stationarily many sets $N$ in $P_{\omega_2}(H(\theta))$ 
such that $N$ is internally unbounded and $\omega_1$-guessing.

\bigskip

Fix a regular cardinal $\theta \ge \kappa$ such that the forcing poset 
$(\p \times \textrm{Add}(\omega,\lambda)) * \dot \q$ 
is a member of $H(\theta)$. 
Let $\dot F$ be a $(\p \times \textrm{Add}(\omega,\lambda)) * \dot \q$-name 
for a function $\dot F : (H(\theta))^{<\omega} \to H(\theta)$. 
We will prove that $(\p \times \textrm{Add}(\omega,\lambda)) * \dot \q$ forces that there exists a set $N$ satisfying:
\begin{enumerate}
\item $N$ is in $P_{\kappa}(H(\theta))$;
\item $N \prec H(\theta)$;
\item $N$ is closed under $\dot F$;
\item $N$ is internally unbounded and $\omega_1$-guessing.
\end{enumerate}
This will complete the proof.\footnote{The proof actually shows the 
existence of stationarily many $N$ satisfying (1)--(4) which are 
internally stationary. 
The reason is that for all regular $\omega_2 \le 
\theta < \kappa$, $\p$ forces that $H(\theta)$ is internally stationary. 
In fact, we can strengthen this to internally club, 
by modifying the adequate set forcing $\p$ by simultaneously 
adding $\kappa$ many club subsets of $\omega_1$ to get that for 
all regular 
$\omega_2 \le \theta < \kappa$, $\p$ forces that $H(\theta)$ 
is internally club. 
This modification of adequate set forcing is beyond the scope of 
the paper. 
Alternatively, this stronger result could also be obtained by replacing $\p$ 
with the decorated sequence forcing of Neeman \cite{neeman}.}

Let us abbreviate $\textrm{Add}(\omega,\lambda)$ 
by $\textrm{Add}$.

\begin{notation}
Fix a $V$-generic filter $G \times H$ on $\p \times \textrm{Add}$, and 
fix a $V[G \times H]$-generic filter $I$ on $\q := (\dot \q)^{G \times H}$. 
Let $F := (\dot F)^{(G \times H) * I}$.
\end{notation}

We will prove that in $V[(G \times H) * I]$, there exists a set $N$ in 
$P_{\kappa}(H(\theta))$ such that 
$N \prec H(\theta)$, $N$ is closed under $F$, and $N$ is internally 
unbounded and $\omega_1$-guessing.

\begin{notation}
By the supercompactness of $\kappa$, fix in $V$ an elementary 
embedding $j : V \to M$ with critical point $\kappa$ such that 
$j(\kappa) > |H(\theta)|$ and $M^{|H(\theta)|} \subseteq M$.
\end{notation}
 
Note that by the closure of $M$, we have that 
$H(\theta)^V = H(\theta)^M$ and 
$j \restriction H(\theta)^V \in M$. 
Since $H(\theta)^V = H(\theta)^M$ and 
$\p \times \textrm{Add} \in H(\theta)^V$, 
it follows that 
$$
H(\theta)^{V[G \times H]} = H(\theta)^V[G \times H] = 
H(\theta)^M[G \times H] = H(\theta)^{M[G \times H]},
$$
and also 
$$
H(\theta)^{V[(G \times H)*I]} = H(\theta)^V[(G \times H)*I] = 
H(\theta)^M[(G \times H)*I] = H(\theta)^{M[(G \times H)*I]}.
$$

Since the critical point of $j$ is $\kappa$ and $\p \times \textrm{Add}$ 
is $\kappa$-c.c., it follows that 
$j \restriction (\p \times \textrm{Add})$ 
is a regular embedding of $\p \times \textrm{Add}$ 
into $j(\p \times \textrm{Add})$, as proven in Section 1. 
Hence $j[\p \times \textrm{Add}]$ is a regular suborder 
of $j(\p \times \textrm{Add})$, $j[G \times H]$ is a $V$-generic filter 
on $j[\p \times \textrm{Add}]$, and 
$V[G \times H] = V[j[G \times H]]$. 
So in $V[G \times H]$, we can form the forcing poset 
$j(\p \times \textrm{Add}) / j[G \times H]$.

\begin{notation}
Let $J$ be a $V[(G \times H)*I]$-generic filter on the forcing poset 
$j(\p \times \textrm{Add}) / j[G \times H]$.
\end{notation}

Then in particular, $J$ is a $V[G \times H]$-generic filter on 
$j(\p \times \textrm{Add}) / j[G \times H]$. 
So by Lemma 1.1, $J$ is a $V$-generic filter on $j(\p \times \textrm{Add})$, 
$J \cap j[\p \times \textrm{Add}] = j[G \times H]$, and 
$V[G \times H][J] = V[J]$. 
It follows that 
$J$ is an $M$-generic filter on $j(\p \times \textrm{Add})$, and 
$j[G \times H] \subseteq J$. 
So in $V[J]$, standard arguments show that 
we can extend $j$ to 
$$
j : V[G \times H] \to M[J]
$$
by letting 
$$
j(\dot a^{G \times H}) := j(\dot a)^{J}
$$
for any $\p \times \textrm{Add}$-name $\dot a$ in $V$.

As noted above, 
$j \restriction H(\theta)^V \in M$. 
Since $H(\theta)^{V[G \times H]} = H(\theta)^V[G \times H]$, 
the definition of $j$ above shows that 
$j \restriction H(\theta)^{V[G \times H]}$ is definable in $M[J]$ from 
$H(\theta)^V$, $j \restriction H(\theta)^V$, $G \times H$, and $J$. 
Hence $j \restriction H(\theta)^{V[G \times H]}$ is in $M[J]$.

\bigskip

Since $J$ is a $V[G \times H][I]$-generic filter on 
$j(\p \times \textrm{Add}) / j[G \times H]$, by the product lemma, 
$I$ is a $V[G \times H][J]$-generic filter on $\q$. 
But $V[G \times H][J] = V[J]$, so $I$ is a $V[J]$-generic filter on $\q$. 
Hence $I$ is an $M[J]$-generic filter on $\q$. 
Also by the product lemma, 
$$
V[J][I] = V[G \times H][J][I] = V[G \times H][I][J].
$$
Since $j \restriction \q$ is an isomorphism in $M[J]$ from $\q$ to $j[\q]$, 
$j[I]$ is an $M[J]$-generic filter on $j[\q]$.

As $J$ is a $V[G \times H]$-generic filter on 
$j(\p \times \textrm{Add}) / j[G \times H]$, 
it is also an $M[G \times H]$-generic filter on 
$j(\p \times \textrm{Add}) / j[G \times H]$. 
So by Lemma 1.1, $M[G \times H][J] = M[J]$. 
By the product lemma,
$$
M[J][I] = M[G \times H][J][I] = M[G \times H][I][J].
$$

\bigskip

Let 
$$
N_0 := j[H(\theta)^{V[G \times H]}]
$$
Note that since $j \restriction H(\theta)^{V[G \times H]}$ is in $M[J]$, 
$N_0$ is in $M[J]$. 
We would like to apply Theorem 7.1 in $M[J]$ 
to the special iteration $j(\q)$ and the model $N_0$. 
Now $N_0$ has the same cardinality as $H(\theta)^{V[G \times H]}$, which 
in turn has the same cardinality as $H(\theta)^V$. 
But $|H(\theta)|^V < j(\kappa)$, and in $M[J]$, $j(\kappa)$ is equal to $\omega_2$ 
and $\omega_1^M$ is preserved.  
It follows that in $M[J]$, $N_0$ has size $\omega_1$. 
Also clearly $\omega_1 \subseteq N_0$.

We claim that $N_0$ is an elementary substructure of 
$H(j(\theta))^{M[J]}$. 
Let $F^*$ be a Skolem function for the structure $H(\theta)^{V[G \times H]}$ 
in $V[G \times H]$. 
Since $H(\theta)^{V[G \times H]}$ is closed under $F^*$, it easily follows 
that $N_0$ is closed under $j(F^*)$. 
By the elementarity of $j$, $j(F^*)$ is a Skolem function for 
$H(j(\theta))^{M[J]}$ in $M[J]$, which proves the claim.

So all of the assumptions of Theorem 7.1 for $j(\q)$ and $N_0$ hold in $M[J]$. 
By Theorem 7.1, it follows that in $M[J]$, 
$N_0 \cap j(\q)$ is a regular suborder of $j(\q)$, and 
$N_0 \cap j(\q)$ forces that $j(\q) / \dot G_{N_0 \cap j(\q)}$ is 
$\omega_1$-c.c.\ and 
has the $\omega_1$-approximation property.

\begin{lemma}
In the model $M[J]$, 
$j[\q]$ is a regular suborder 
of $j(\q)$, and $j[\q]$ forces that $j(\q) / \dot G_{j[\q]}$ is $\omega_1$-c.c.\ 
and has the $\omega_1$-approximation property.
\end{lemma}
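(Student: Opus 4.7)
The plan is to reduce Lemma 7.19 directly to the consequence of Theorem 7.1 that was derived in the paragraph immediately preceding the lemma, by proving the identity
\[
j[\q] \;=\; N_0 \cap j(\q)
\]
in $M[J]$. Once this is established, everything asserted about $N_0 \cap j(\q)$ transfers verbatim to $j[\q]$.

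First I would verify the easy inclusion $j[\q] \subseteq N_0 \cap j(\q)$. Since $\dot{\q}$ was chosen so that $(\p \times \textrm{Add}) * \dot{\q} \in H(\theta)$, we have $\dot{\q} \in H(\theta)$, hence $\q = (\dot{\q})^{G \times H} \in H(\theta)^{V[G \times H]}$, and so $\q \subseteq H(\theta)^{V[G \times H]}$. For any $q \in \q$, this gives $j(q) \in j[H(\theta)^{V[G \times H]}] = N_0$, while $j(q) \in j(\q)$ holds by elementarity.

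For the reverse inclusion $N_0 \cap j(\q) \subseteq j[\q]$, I would use the extended elementary embedding $j : V[G \times H] \to M[J]$ that was defined just above. If $x \in N_0 \cap j(\q)$, then $x = j(y)$ for some $y \in H(\theta)^{V[G \times H]}$, and $j(y) \in j(\q)$. Applying the elementarity of $j$ (with parameter $\q \in V[G \times H]$) to the first-order assertion ``$y \in \q$'' yields $y \in \q$, so $x = j(y) \in j[\q]$.

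Having established $j[\q] = N_0 \cap j(\q)$, the lemma follows immediately: in $M[J]$, Theorem 7.1 applied to the special iteration $j(\q)$ and the elementary substructure $N_0$ (which has size $\omega_1$ in $M[J]$, contains $\omega_1$, and satisfies $N_0 \prec H(j(\theta))^{M[J]}$, all of which were noted in the preceding discussion) gives that $N_0 \cap j(\q)$ is a regular suborder of $j(\q)$, and forces the quotient $j(\q)/\dot G_{N_0 \cap j(\q)}$ to be $\omega_1$-c.c.\ and to have the $\omega_1$-approximation property. Substituting $j[\q]$ for $N_0 \cap j(\q)$ yields the lemma. There is no real obstacle in the argument; the only point to keep in mind is that both $j \restriction H(\theta)^{V[G \times H]}$ and the extended embedding on $V[G \times H]$ live in $M[J]$, which was verified before the statement of the lemma.
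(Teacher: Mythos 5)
Your proof is correct and follows essentially the same route as the paper: both reduce the lemma to the identity $j[\q] = N_0 \cap j(\q)$, verify the two inclusions using the fact that $\q \subseteq H(\theta)^{V[G\times H]}$ together with the elementarity of the lifted embedding, and then invoke the consequence of Theorem 7.1 for $N_0 \cap j(\q)$ established just before the lemma.
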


\begin{proof}
By the comments preceding this lemma, 
it suffices to show that $j[\q] = j(\q) \cap N_0$. 
Let $q \in \q$, and we will show that $j(q) \in j(\q) \cap N_0$. 
Since $\dot \q \in H(\theta)^V$, $\q \in H(\theta)^{V[G \times H]}$. 
So $q \in H(\theta)^{V[G \times H]}$. 
Hence $j(q) \in N_0$. 
Also $j(q) \in j(\q)$ by the elementarity of $j$. 
Conversely, let $q^* \in j(\q) \cap N_0$, and we will show that 
$q^* \in j[\q]$. 
Then by the definition of $N_0$, there is $q \in H(\theta)^{V[G \times H]}$ 
such that $j(q) = q^*$. 
Since $j(q) = q^* \in j(\q)$, it follows that $q \in \q$ by 
the elementarity of $j$. 
Hence $q^* \in j[\q]$.
\end{proof}

\bigskip

Since $j[I]$ is an $M[J]$-generic filter on $j[\q]$, we can form the forcing 
poset $j(\q) / j[I]$ in $M[J][I]$. 
In particular, this forcing poset is in $V[G \times H][I][J]$.

\begin{notation}
Let $K$ be a $V[G \times H][I][J]$-generic filter on $j(\q) / j[I]$.
\end{notation}
 
Since $V[G \times H][I][J] = V[J][I]$, $K$ is a 
$V[J][I]$-generic filter on $j(\q) / j[I]$. 
Hence $K$ is an $M[J][I]$-generic filter on $j(\q) / j[I]$. 
By Lemma 1.1, it follows that $K$ is an $M[J]$-generic filter on $j(\q)$, 
$K \cap j[\q] = j[I]$, and $M[J][I][K] = M[J][K]$. 
In particular, $j[I] \subseteq K$. 
By standard arguments, 
in $V[J][I][K]$ we can extend the elementary embedding 
$j : V[G \times H] \to M[J]$ to 
$$
j : V[G \times H][I] \to M[J][K]
$$
by letting 
$$
j(\dot a^I) := j(\dot a)^K
$$
for any $\dot a$ which is a $\q$-name in 
$V[G \times H]$. 
Note that since $j \restriction H(\theta)^{V[G \times H]}$ is in $M[J]$, 
and $H(\theta)^{V[G \times H][I]} = H(\theta)^{V[G \times H]}[I]$, 
it follows that 
$j \restriction H(\theta)^{V[G \times H][I]}$ is definable in $M[J][K]$ 
from the parameters $H(\theta)^{V[G \times H]}$, 
$j \restriction H(\theta)^{V[G \times H]}$, $I$, and $K$. 
Therefore $j \restriction H(\theta)^{V[G \times H][I]}$ is in $M[J][K]$.

\begin{lemma}
The pair 
$$
(M[G \times H][I],M[J][K])
$$
has the $\omega_1$-covering property and the 
$\omega_1$-approximation property.
\end{lemma}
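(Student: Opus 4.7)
The plan is to factor the extension $M[G \times H][I] \subseteq M[J][K]$ through the intermediate model $M[J][I]$, verify that each of the two resulting pairs has both the $\omega_1$-covering and the $\omega_1$-approximation properties, and then invoke Lemma 1.7 to combine them. First I would record the equality $M[G \times H][I][J] = M[J][I]$: since $J$ was chosen generic over $V[G \times H][I]$, it is generic over $M[G \times H][I]$, and a standard mutual-genericity argument over $M[G \times H]$ (using $V[G \times H][J] = V[J]$ from Lemma 1.1 and the analogue in $M$) lets us swap the order of $I$ and $J$. Similarly, by Lemma 1.1 applied to $K$ in $M[J]$, the interdefinability of $I$ and $j[I]$ via the isomorphism $j \restriction \q$ gives $M[J][K] = M[J][j[I]][K] = M[J][I][K]$, with $K$ generic over $M[J][I]$ for the quotient $j(\q)/j[I]$.

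For the first pair $(M[G \times H][I], M[J][I])$, note that $\p \times \textrm{Add}$ is $\kappa$-c.c.\ and lies in $H(\theta)^V = H(\theta)^M$, so the argument from Section~1 (carried out in $M$) shows that $j[\p \times \textrm{Add}]$ is a regular suborder of $j(\p \times \textrm{Add})$ in $M$. Applying the elementarity of $j$ to Proposition 5.6, the quotient $j(\p \times \textrm{Add})/j[G \times H]$ is strongly proper on a stationary set in $M[G \times H]$. Since $\q$ is $\omega_1$-c.c., hence proper, in $M[G \times H]$, Theorem 5.5 lifts this property to $M[G \times H][I]$. By Proposition 5.4, the quotient there has the $\omega_1$-covering and $\omega_1$-approximation properties, so the pair $(M[G \times H][I], M[J][I])$ has both.

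For the second pair $(M[J][I], M[J][K])$, the preceding Lemma 7.19 says that in $M[J]$, $j[\q]$ is a regular suborder of $j(\q)$ and $j[\q]$ forces that $j(\q)/\dot G_{j[\q]}$ is $\omega_1$-c.c.\ and has the $\omega_1$-approximation property. Specializing to the generic filter $j[I]$ on $j[\q]$ and using $M[J][j[I]] = M[J][I]$, we conclude that $j(\q)/j[I]$ is $\omega_1$-c.c.\ (so has the $\omega_1$-covering property) and has the $\omega_1$-approximation property in $M[J][I]$. Hence $(M[J][I], M[J][K])$ has both properties. A final application of Lemma 1.7 to the chain $M[G \times H][I] \subseteq M[J][I] \subseteq M[J][K]$ yields the desired conclusion.

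The main obstacle is the first pair, and in particular the careful bookkeeping needed to apply Proposition 5.6 \emph{inside} $M$: one must use the $|H(\theta)|$-closure of $M$ to ensure that $j \restriction (\p \times \textrm{Add})$, and hence $j[\p \times \textrm{Add}]$, lies in $M$ and is recognized there as a regular embedding, and then verify that elementarity of $j$ transfers the conclusion of Proposition 5.6 to $j(\p \times \textrm{Add})$ over $M$. Once that is in place, Theorem 5.5 handles the passage through $\q$ automatically.
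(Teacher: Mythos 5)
Your proposal is correct and follows essentially the same route as the paper: the same factorization through the intermediate model $M[G\times H][I][J]=M[J][I]$, the same use of Proposition 5.6, Theorem 5.5, and Proposition 5.4 for the first pair, the same use of the preceding lemma on $j[\q]$ as a regular suborder of $j(\q)$ for the second pair, and the same final appeal to Lemma 1.7. (Your citation ``Lemma 7.19'' should be to Lemma 8.4, but the content you invoke is exactly right.)
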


\begin{proof}
We have that 
$$
M[J][K] = M[J][I][K] = M[G \times H][J][I][K] = 
M[G \times H][I][J][K].
$$
So it suffices to show that the pair 
$$
(M[G \times H][I],M[G \times H][I][J][K])
$$
has the $\omega_1$-covering property and the 
$\omega_1$-approximation property.

By Proposition 5.6, 
for any regular suborder $\p_0$ of $\p \times \textrm{Add}$, 
$\p_0$ forces that $(\p \times \textrm{Add}) / \dot G_{\p_0}$ is 
strongly proper on a stationary set. 
By the elementarity of $j$, the same is true of 
$j(\p \times \textrm{Add})$ in $M$. 
In particular, this is true in $M$ of the regular suborder $j[\p \times \textrm{Add}]$ 
of $j(\p \times \textrm{Add})$. 
It follows that in $M[G \times H]$, the forcing poset 
$j(\p \times \textrm{Add}) / j[G \times H]$ is strongly proper on a 
stationary set. 

In $M[G \times H]$, $\q$ is a special iteration, and hence 
is $\omega_1$-c.c., and therefore proper. 
By Theorem 5.5, since $I$ is an $M[G \times H]$-generic filter on $\q$, 
in $M[G \times H][I]$ the forcing poset 
$j(\p \times \textrm{Add}) / j[G \times H]$ is 
strongly proper on a stationary set. 
Therefore in $M[G \times H][I]$, $j(\p \times \textrm{Add}) / j[G \times H]$ 
has the 
$\omega_1$-covering property and the $\omega_1$-approximation property. 
Since $J$ is an $M[G \times H][I]$-generic filter on 
$j(\p \times \textrm{Add}) / j[G \times H]$, it follows that 
the pair 
$$
(M[G \times H][I],M[G \times H][I][J])
$$
has the $\omega_1$-covering property and the 
$\omega_1$-approximation property.

By Lemma 8.4, in the model $M[J] = M[G \times H][J]$, 
$j[\q]$ is a regular suborder 
of $j(\q)$, and $j[\q]$ forces that $j(\q) / \dot G_{j[\q]}$ is $\omega_1$-c.c.\ 
and has the $\omega_1$-approximation property. 
Also $j[I]$ is an $M[J]$-generic filter on $j[\q]$. 
So in the model $M[J][I]$, 
$j(\q) / j[I]$ is $\omega_1$-c.c.\ and has the 
$\omega_1$-approximation property. 
Since $K$ is an $M[J][I]$-generic filter on $j(\q) / j[I]$, 
the pair 
$$
(M[J][I],M[J][I][K])
$$
has the $\omega_1$-covering property and the 
$\omega_1$-approximation property. 
But $M[J] = M[G \times H][J]$, so the pair 
$$
(M[G \times H][J][I],M[G \times H][J][I][K])
$$
has the $\omega_1$-covering property and the 
$\omega_1$-approximation property. 
Since $M[G \times H][J][I] = M[G \times H][I][J]$, 
the pair 
$$
(M[G \times H][I][J],M[G \times H][I][J][K])
$$
has the 
$\omega_1$-covering property and the 
$\omega_1$-approximation property.

To summarize, we have shown that the pairs 
$$
(M[G \times H][I],M[G \times H][I][J])
$$
and 
$$
(M[G \times H][I][J],M[G \times H][I][J][K])
$$
both have the $\omega_1$-covering property and the 
$\omega_1$-approximation property. 
By Lemma 1.7, it follows that the pair 
$$
(M[G \times H][I],M[G \times H][I][J][K])
$$
has the $\omega_1$-covering property 
and the $\omega_1$-approximation property.
\end{proof}

Recall that we are trying to prove that in $V[G \times H][I]$, 
there is a set $N$ in $P_{\kappa}(H(\theta))$ such that 
$N \prec H(\theta)$, $N$ is closed under $F$, and 
$N$ is internally unbounded and $\omega_1$-guessing. 
In the model $V[J][I][K]$, we have an elementary embedding 
$j : V[G \times H][I] \to M[J][K]$. 
So by the elementarity of $j$, it suffices to prove that in $M[J][K]$, there 
exists a set $N$ satisfying:
\begin{enumerate}
\item[(a)] $N$ is in $P_{j(\kappa)}(H(j(\theta))^{M[J][K]})$;
\item[(b)] $N \prec H(j(\theta))^{M[J][K]}$;
\item[(c)] $N$ is closed under $j(F)$;
\item[(d)] $N$ is internally unbounded and $\omega_1$-guessing.
\end{enumerate}

\bigskip

Let 
$$
N := j[H(\theta)^{V[G \times H][I]}].
$$
We will show that $N$ is in $M[J][K]$, 
and $M[J][K]$ models that $N$ satisfies 
properties (a)--(d) above.

\bigskip

Since $j \restriction H(\theta)^{V[G \times H][I]} \in M[J][K]$, as 
observed above, $N \in M[J][K]$. 
Also by the elementarity of $j$, 
$N \subseteq j(H(\theta)^{V[G \times H][I]}) 
= H(j(\theta))^{M[J][K]}$. 
Now $N$ has the same cardinality as $H(\theta)^{V[G \times H][I]}$, which 
in turn has the same cardinality as $H(\theta)^V$. 
But $|H(\theta)|^V < j(\kappa)$, and in $M[J][K]$, 
$j(\kappa)$ is equal to $\omega_2$ 
and $\omega_1^M$ is preserved.  
It follows that in $M[J][K]$, $N$ has size $\omega_1$. 
Hence $N$ is in $P_{j(\kappa)}(H(j(\theta))^{M[J][K]})$.

We claim that $N$ is an elementary substructure of 
$H(j(\theta))^{M[J][K]}$. 
Let $F^*$ be a Skolem function for the structure $H(\theta)^{V[G \times H][I]}$ 
in $V[G \times H][I]$. 
Since $H(\theta)^{V[G \times H][I]}$ is closed under $F^*$, 
it easily follows that $N$ is closed under $j(F^*)$. 
By the elementarity of $j$, $j(F^*)$ is a Skolem function for 
$H(j(\theta))^{M[J][I]}$ in $M[J][I]$. 
So $N$ is an elementary substructure of $H(j(\theta))^{M[J][I]}$. 
The same argument shows that $N$ is closed under $j(F)$. 

\bigskip

It remains to show that $N$ is internally unbounded and $\omega_1$-guessing 
in $M[J][K]$. 
To show that $N$ is internally unbounded, let $a$ be a countable 
subset of $N$ in $M[J][K]$.  
Then $b := j^{-1}[a]$ is a countable subset of 
$H(\theta)^{V[G \times H][I]}$ in $M[J][K]$. 
Since the pair $(M[G \times H][I],M[J][K])$ has the $\omega_1$-covering 
property by Lemma 8.6, 
we can fix a countable set 
$c \subseteq H(\theta)^{V[G \times H][I]} = H(\theta)^{M[G \times H][I]}$ 
in $M[G \times H][I]$ such that $b \subseteq c$. 
But $\theta$ is regular and uncountable in $M[G \times H][I]$, so 
$c \in H(\theta)^{M[G \times H][I]}$. 
Hence $j(c) = j[c]$ is in $N$, $j(c)$ is countable, and 
$a = j[b] \subseteq j[c] = j(c)$.

To show that $N$ is $\omega_1$-guessing in $M[J][K]$, 
by Lemma 2.2 it suffices to show that the pair 
$$
(\overline{N},M[J][K])
$$
satisfies the $\omega_1$-approximation 
property, where $\overline{N}$ is the transitive collapse of $N$. 
Since $N$ is isomorphic to $H(\theta)^{V[G \times H][I]}$, which is transitive, 
we have that $\overline{N} = H(\theta)^{V[G \times H][I]} 
= H(\theta)^{M[G \times H][I]}$. 
Hence it suffices to show that the pair 
$$
(H(\theta)^{M[G \times H][I]},M[J][K])
$$
has the $\omega_1$-approximation property.

Let $d$ be a bounded subset of 
$H(\theta)^{M[G \times H][I]} \cap On = \theta$ 
in $M[J][K]$ which is countably approximated by 
$H(\theta)^{M[G \times H][I]}$. 
We will show that $d$ is in $H(\theta)^{M[G \times H][I]}$. 
We claim that $d$ is countably approximated by $M[G \times H][I]$. 
Consider a countable set $a$ in $M[G \times H][I]$. 
Then $a \cap \theta$ is a countable subset of $\theta$ in $M[G \times H][I]$, and 
hence is in $H(\theta)^{M[G \times H][I]}$. 
Since $d \subseteq \theta$, $a \cap d = (a \cap \theta) \cap d$. 
Since $d$ is countably approximated by $H(\theta)^{M[G \times H][I]}$, 
$a \cap d = (a \cap \theta) \cap d$ is in $H(\theta)^{M[G \times H][I]}$, and hence is in $M[G \times H][I]$. 
Thus $d$ is countably approximated by $M[G \times H][I]$. 
Since $(M[G \times H][I],M[J][K])$ has the $\omega_1$-approximation property 
by Lemma 8.6, 
$d \in M[G \times H][I]$. 
But $d$ is a bounded subset of $\theta$ and $\theta$ is regular 
in $M[G \times H][I]$, so $d \in H(\theta)^{M[G \times H][I]}$.

\bigskip

We conclude the paper with two questions. 

\begin{enumerate}
\item In Corollary 3.5, we proved that $\textsf{IGMP}$ implies $\textsf{SCH}$. 
Does $\textsf{GMP}$ imply $\textsf{SCH}$?
\item At the end of Section 2, we noted that $\textsf{GMP}$ is consistent with 
$\mathfrak p = \omega_1$. 
Does $\textsf{IGMP}$ imply $\mathfrak p > \omega_1$?
\end{enumerate}

\bibliographystyle{plain}
\bibliography{paper28}

\end{document}